
\documentclass[11pt,reqno]{amsart}
\usepackage[T1]{fontenc}
\usepackage{amssymb,amsmath}
\usepackage[english]{babel}
\usepackage{enumerate}
\usepackage{enumitem,kantlipsum}
\usepackage[colorlinks=true,linkcolor=blue]{hyperref}
\usepackage{dsfont}
\usepackage{cite}
\usepackage[foot]{amsaddr}
\usepackage{nicefrac}


\newtheorem{theorem}{Theorem}[section]
\newtheorem{lemma}[theorem]{Lemma}
\newtheorem{corollary}[theorem]{Corollary}
\theoremstyle{definition}
\newtheorem{definition}[theorem]{Definition}

\newtheorem{remark}[theorem]{Remark}
\newtheorem{assumption}[theorem]{Assumption}


\textheight 22.5truecm \textwidth 14.5truecm
\setlength{\oddsidemargin}{0.35in}\setlength{\evensidemargin}{0.35in}
\setlength{\topmargin}{-.5cm}

\setlist[enumerate]{leftmargin=*, label=(\roman*)}
\numberwithin{equation}{section}



\def\Cb{{\rm C}_{\rm b}}
\def\Cbi{{\rm C}_{\rm b}^\infty}
\def\Cci{{\rm C}_{\rm c}^\infty}
\def\Ck{{\rm C}_\kappa}

\def\Fk{{\rm F}_\kappa}
\def\Lipb{{\rm Lip}_{\rm b}}
\def\UCK{{\rm UC}_{\kappa}}


\def\LSplus{\mathcal{L}_S^+}

\def\Rd{\R^d}


\DeclareMathOperator{\ca}{ca}
\DeclareMathOperator{\id}{id}
\DeclareMathOperator{\supp}{supp}
\DeclareMathOperator{\tr}{tr}


\def\A{\mathcal{A}}
\def\B{\mathcal{B}}

\def\d{{\rm d}}
\def\D{\mathcal{D}}
\def\e{\mathcal{E}}
\def\E{\mathbb{E}}
\def\F{\mathcal{F}}
\def\H{\mathcal{H}}

\def\N{\mathbb{N}}

\def\P{\mathbb{P}}
\def\R{\mathbb{R}}
\def\S{\mathbb{S}}
\def\T{\mathcal{T}}

\def\epsilon{\varepsilon}

\def\one{\mathds{1}}


\begin{document}

\vspace*{-0.2cm}
\title[Stability of convex monotone semigroups]{Convergence of infinitesimal generators and stability of convex monotone semigroups}

\author{Jonas Blessing$^1$}
\address{{\rm $^1$Department of Mathematics, ETH Zurich, 8092 Zurich, Switzerland}}
\email{$^1$jonas.blessing@math.ethz.ch}

\author{Michael Kupper$^2$}
\address{{\rm $^2$Department of Mathematics and Statistics,  University of Konstanz, 
    78457 Konstanz, Germany}}
\email{$^2$kupper@uni-konstanz.de}

\author{Max Nendel$^3$}
\address{{\rm $^3$Department of Statistics and Actuarial Science, University of Waterloo,
    ON N2L 3G1 Waterloo, Canada}}
\email{$^3$mnendel@uwaterloo.ca}

\date{\today}

\thanks{Financial support through the Deutsche Forschungsgemeinschaft 
(DFG, German Research Foundation) -- SFB 1283/2 2021 – 317210226 is 
gratefully acknowledged.}

\begin{abstract}
Based on the convergence of their infinitesimal generators in the mixed topology, we 
provide a stability result for strongly continuous convex monotone semigroups on spaces 
of continuous functions.\ In contrast to previous results, we do not rely on the theory 
of viscosity solutions but use a recently established comparison principle which uniquely 
determines semigroups via their upper $\Gamma$-generator defined on their upper Lipschitz 
sets and therefore resembles the classical analogue from the linear case.\ The framework 
also allows for discretizations both in time and space and covers a variety of applications.\
This includes Euler schemes and Yosida-type approximations for upper envelopes of families 
of linear semigroups, stability results and finite-difference schemes for convex HJB equations, 
Freidlin--Wentzell-type results and Markov chain approximations for a class of stochastic optimal
control problems. 

\smallskip
\textit{Key words:} convex monotone semigroup, infinitesimal generator, convergence of semigroups, 
Euler formula, optimal control, finite-difference scheme, Markov chain approximation, large deviations
\smallskip\\
\textit{MSC 2020:} Primary 35B35; 47H20; Secondary 47H07; 49M25; 60G65
\end{abstract}

\maketitle 

\vspace*{-1cm}

\thispagestyle{empty}

\section{Introduction}

In this article, we consider sequences $(S_n)_{n\in\N}$ of convex monotone semigroups 
on spaces of continuous functions and give explicit conditions under which convergence
of their infinitesimal generators $(A_n)_{n\in\N}$ guarantees convergence of the semigroups.\
In the linear case, such results are classical, cf.\ Kurtz~\cite{Kurtz70} and Trotter~\cite{Trotter}, 
and can be applied, for instance, to obtain convergence results for Markov processes, 
see, e.g.,~Ethier and Kurtz~\cite{EK}, Kertz~\cite{Kertz74,Kertz78} and Kurtz~\cite{Kurtz75}.\ 
While~\cite{Kurtz70} and~\cite{EK} work on locally convex spaces and Banach spaces assuming 
conditions on the Laplace transform of the semigroups or even explicitly the existence of a 
limit semigroup, respectively, we restrict our analysis to a particular class of semigroups 
but only assume that the generators converge for smooth functions.\ Based on the Crandall--Liggett 
theorem, cf.\ Crandall and Liggett~\cite{CL71}, the previously mentioned results can be 
extended to nonlinear semigroups which are generated by m-accretive or dissipative operators, 
see~Br\'{e}zis and Pazy~\cite{BP} and Kurtz~\cite{Kurtz73}.\ While this approach closely 
resembles the theory of linear semigroups, the definition of the nonlinear resolvent typically 
requires the existence of a unique classical solution of a corresponding fully nonlinear 
elliptic partial differential equation (PDE). As pointed out in Evans~\cite{Evans87} and 
Feng and Kurtz~\cite{FK}, the necessary regularity of classical solutions is, in general, 
delicate.\ This observation was, among others, one of the motivations for the introduction 
of viscosity solutions, cf.\ Crandall et al.~\cite{CIL} and Crandall and Lions~\cite{CL83}.

In contrast to classical solutions, the latter have the stability property that, under mild
conditions, limits of viscosity solutions are again viscosity solutions.\ A prominent example
is the vanishing viscosity method, where smooth solutions of semilinear second order equations
converge to the unique viscosity solution of a fully nonlinear first order equation.\
Moreover, the Barles--Perthame method~\cite{BP88} establishes the convergence of viscosity 
sub- or super solutions using semi-relaxed limits and a comparison principle for the limit.
Another remarkable stability property of viscosity solutions is that any stable and consistent
monotone approximation scheme converges to the exact solution if the latter is unique, 
cf.\ Barles and Souganidis~\cite{BS91}.\ Based on this result, it is possible to derive 
explicit convergence rates for several numerical schemes for Hamilton-Jacobi-Bellman (HJB) 
equations, see, e.g.,~Barles and Jakobsen~\cite{BJ05, BJ07}, Briani et al.~\cite{BCZ12}, 
Caffarelli and Souganidis~\cite{CS08}, Jakobsen et al.~\cite{Jakobsen2019} and 
Krylov~\cite{Krylov98, Krylov99, Krylov05}. Another prominent numerical method for 
stochastic optimal control problems is the Markov chain approximation by Dupuis and 
Kushner~\cite{DK01}, where controlled diffusion processes are approximated by 
discrete-time controlled Markov chains.

In the context of stochastic optimal control, the value function is typically the unique 
viscosity solution of an HJB equation which can be seen as nonlinear evolution equation 
related to the generator of a convex monotone semigroup.\ For sublinear Markov semigroups, 
Kühn~\cite{kuhn2018viscosity} and Hu and Peng~\cite{MR4251961} explicitly characterize 
the generator as supremum of linear operators.\ We also refer to Neufeld and Nutz~\cite{neufeld2017nonlinear}, 
Nendel and R\"ockner~\cite{NR21} and Criens and Niemann~\cite{Criens, Criens25a, Criens25b} 
for the link between sublinear semigroups and value functions of stochastic optimal control problems.

Returning to the convergence of nonlinear semigroups, we briefly discuss the results from
Feng and Kurtz~\cite{FK} which are motivated by the large deviations principle for sequences
of Markov processes.\ The approach in~\cite{FK} combines results on dissipative operators
with key arguments from viscosity theory and has been applied and extended in Feng~\cite{Feng}, 
Feng et al.~\cite{FFK}, Kraaij~\cite{Kraaij19,Kraaij22} and Popovic~\cite{Popovic}.\
Here, the resolvent equation $(\id-\lambda A)u=f$ for the limit operator $Au:=\lim_{n\to\infty}A_n u$ 
is solved in the viscosity sense and needs to satisfy a comparison principle which does 
not hold a priori but has to be verified on a case-by-case basis.\ Since the existence of a solution is 
always guaranteed, the limit semigroup can then be constructed via the Euler formula 
$S(t)f:=\lim_{n\to\infty}R(\nicefrac{t}{n})^n f$, where $R(\lambda)f$ denotes the unique 
viscosity solution of the resolvent equation.\ By construction, the semigroup $(S(t))_{t\geq 0}$ 
is generated by the operator $A$ and one can show that $S(t)f=\lim_{n\to\infty}S_n(t)f$. 

In contrast to the previously mentioned results, the present approach neither relies on 
the existence of nonlinear resolvents nor the theory of viscosity solutions.\ Instead, 
the key arguments are based on a recently established comparison principle for strongly 
continuous convex monotone semigroups, cf.\ Blessing et al.~\cite{BDKN} and Blessing and 
Kupper~\cite{BK22}, which uniquely determines semigroups via their $\Gamma$-generators. 
Moreover, due to an approximation procedure, it is usually sufficient to determine the 
generators for smooth functions.\ These results allow us to proceed as follows.\ First, 
the limit semigroup is defined as $S(t)f:=\lim_{l\to\infty}S_{n_l}(t)f$ for all $(t,f)$ 
in a countable dense set, where the convergence for a subsequence $(n_l)_{l\in\N}$ is
guaranteed by a relative compactness argument.\ Since we only require convergence with
respect to (w.r.t.)\ the mixed topology rather than convergence w.r.t.\ the supremum norm, 
the latter can be verified by means of Arzel\`a-Ascoli's theorem.\ After an extension to 
arbitrary $(t,f)$, we then show that the generator of $(S(t))_{t\geq 0}$ is given by 
$Af=\lim_{n\to\infty}A_n f$ for smooth functions~$f$.\ At this point we would like to 
emphasize that the semigroups $(S_n)_{n\in\N}$ satisfy the comparison principle 
from~\cite{BDKN,BK22} which transfers to $(S(t))_{t\geq 0}$.\ In particular, the limit 
semigroup does not depend on the choice of the convergent subsequence $(n_l)_{l\in\N}$ 
and therefore satisfies $S(t)f=\lim_{n\to\infty}S_n(t)f$.\ This stability result for 
convex monotone semigroups is stated in Theorem~\ref{thm:Sn} and Theorem~\ref{thm:unique}.\ 
Our results also cover a variety of approximation schemes of the form $S(t)f=\lim_{n\to\infty}I_n^{k_n}f$,
where $(I_n)_{n\in\N}$ is a family of one-step operators describing the dynamics on
a discrete time scale of size $h_n>0$ with $h_n\to 0$ and $k_nh_n\to t$, see Theorem~\ref{thm:cher}.\
This extension of the classical Chernoff approximation, cf.\ Chernoff~\cite{chernoff68,chernoff74}, 
includes finite-difference methods for HJB equations, cf.\ Barles and Jakobsen~\cite{BJ07},
Bonnans and Zidani~\cite{BZ03} and Krylov~\cite{Krylov98,Krylov05}, and Markov chain 
approximations for stochastic control problems, cf.\ Dupuis and Kushner~\cite{DK01} 
and Fleming and Soner~\cite{FS}.\ Moreover, Theorem~\ref{thm:cher} allows to construct 
nonlinear semigroups without relying on the existence of nonlinear resolvents, e.g.,
in the context of upper envelopes of families of linear semigroups, cf.\ Denk et al.~\cite{DKN20},
Nendel and R\"ockner~\cite{NR21} and Nisio~\cite{Nisio76}.\ Finally, we remark that our 
notion of a strongly continuous convex monotone semigroup coincides with the one in Goldys
et al.~\cite{GNR}, where it is shown that $u(t):=S(t)f$ is a viscosity solution of the 
abstract Cauchy problem $\partial_t u=Au$ with initial condition $u(0)=f$.\ Our convergence 
result for semigroups can therefore be seen as an analogue of the statement that limits of
viscosity solutions are again viscosity solutions and Chernoff-type approximations are
the analogue of monotone schemes.\ We refer to Fleming and Soner~\cite[Chapter~II.3]{FS} 
for a broad discussion on the relation between semigroups and viscosity solutions and 
to Yong and Zhou~\cite[Chapter~4]{MR1696772} for an illustration of the interplay between 
dynamic programming and viscosity solutions in a stochastic optimal control setting.\ 
Convex monotone semigroups also appear in the context of stochastic processes under
model uncertainty, cf.\ Coquet et al.~\cite{MR1906435}, Criens and Niemann~\cite{Criens}, 
Fadina et al.~\cite{MR3955321}, Hu and Peng~\cite{MR4251961}, Krak et al.~\cite{Krak2017}, 
K\"uhn~\cite{kuhn2018viscosity}, Neufeld and Nutz~\cite{neufeld2017nonlinear} and Peng~\cite{Peng19}.\
In many situations, these semigroups admit a stochastic representation via backward stochastic
differential equations (BSDEs) and second order backward stochastic differential equations 
(2BSDEs), cf.\ Cheridito et al.~\cite{MR2319056}, El Karoui et al.~\cite{MR1434407}, 
Kazi-Tani et al.~\cite{MR3361253} and Soner et al.~\cite{Soner2012}. For stability 
and approximation results for BSDEs, we refer to Briand et al.~\cite{Briand2002},
Hu and Peng~\cite{Hu1997}, Geiss et al.~\cite{Geiss2020} and Papapantoleon et al.~\cite{Papapantoleon2023}. 

The abstract results are illustrated in a variety of applications which we briefly discuss here.\
In Subsection~\ref{sec:yosida}, we derive an implicit Euler formula and a Yosida approximation
for upper envelopes of families of linear semigroups.\ Here, we would like to emphasize that 
difficulties in defining the resolvent of the supremum of linear operators are avoided by 
considering the supremum over linear resolvents rather than the resolvent of a nonlinear operator.\
We refer to Budde and Farkas~\cite{bff},  Cerrai~\cite{MR1293091}, K\"{u}hnemund~\cite{kuhnemund} 
and Pazy~\cite{pazy} for the corresponding results on linear semigroups in different settings.\
In Subsection~\ref{sec:VanVis}, we study the stability of convex HJB equations, cf.\ Barron and 
Jensen~\cite{MR1080619,MR1076572}, Crandall and Lions~\cite{CL83}, Frankowska~\cite{MR1200233} 
and Kraaij~\cite{Kraaij22}.\ In Subsection~\ref{sec:LargeDev}, we consider explicit Euler schemes 
for Lipschitz ordinary differential equations (ODEs) with additive noise, where the distribution 
of the noise might be uncertain.\ Depending on the scaling of the noise this can either be seen 
as a robustness result which states that, regardless of possible numerical errors, the Euler 
scheme still converges to the solution of the ODE or as an Euler--Maruyama scheme  for stochastic 
differential equations (SDEs) driven by G-Brownian motions, cf.\ Geng et al.~\cite[Section 3]{Geng2014}, 
Hu et al.~\cite{HJL21} and Peng~\cite{Peng08,Peng19}.\ The analysis of randomized Euler schemes 
is continued by focusing on asymptotic convergence rates by means of a large deviations approach, 
cf.\ Dupuis and Ellis~\cite{DE}, Dembo and Zeitouni~\cite{DZ2010}, Feng and Kurtz~\cite{FK} 
and Varadhan~\cite{Varadhan1984}, leading to Freidlin--Wentzell-type results, cf.\ Feng and
Kurtz~\cite{FK} and Freidlin and Wentzell~\cite{FW}.\ Finally, in Subsection~\ref{sec:control},
we provide Markov chain approximations for a class of stochastic control problems, cf.\ Dupuis 
and Kushner~\cite{DK01}, Fleming and Soner~\cite{FS} and Krylov~\cite{Krylov99}.

\section{Setup and main results}
\label{sec:main}

\subsection{Basic notation and the mixed topology}

Let $\kappa\colon\Rd\to (0,\infty)$ be a bounded continuous function satisfying
\begin{equation} \label{eq:kappa}
 c_\kappa:=\sup_{x\in\Rd}\sup_{|y|\leq 1}\frac{\kappa(x)}{\kappa(x-y)}<\infty.
\end{equation} 
For every $X\subset\Rd$, let $\Ck(X)$ be the space of all continuous 
functions $f\colon X\to\R$ with $\|f\|_{\kappa,X}:=\sup_{x\in X}|f(x)|\kappa(x)<\infty$.\
If $X=\Rd$, we write $\|\cdot\|_\kappa:=\|\cdot\|_{\kappa,X}$.\ The weight function~$\kappa$ 
allows to define semigroups on spaces of unbounded functions such that, for instance, 
in the context of stochastic optimal control or financial applications, unbounded cost 
or payoff functions can be treated, respectively.

An operator $S\colon\Ck(X)\to\Ck(X)$ is monotone if $Sf\leq Sg$ for all $f,g\in\Ck(X)$ with 
$f\leq g$ and convex if $S(\lambda f+(1-\lambda)g)\leq\lambda Sf+(1-\lambda)Sg$ for all
$f,g\in\Ck(X)$ and $\lambda\in [0,1]$, where $f\leq g$ means that $f(x)\leq g(x)$ for all 
$x\in X$.\ For an operator $S\colon\Ck(X)\to\Ck(X)$ and $f\in \Ck(\R^d)$, we define $Sf:=S(f|_X)$.\
Moreover, we write $f_n\downarrow f$ if a sequence of functions $f_n\colon X\to\R$ decreases 
pointwise to another function $f\colon X\to\R$. Denoting by $|\cdot|$ the Euclidean distance,
we define
\[ B_{\Ck(X)}(r):=\{f\in\Ck(X)\colon\|f\|_\kappa\leq r\} \quad\mbox{and}\quad
B_{X}(r):=\{x\in X\colon |x|\leq r\}. \]

Subsequently, we endow $\Ck(X)$ with the mixed topology between $\|\cdot\|_{\kappa,X}$ and the 
topology of uniform convergence on compact sets, i.e., the strongest locally convex topology 
on $\Ck(X)$ that coincides on $\|\cdot\|_{\kappa,X}$-bounded sets with the topology of uniform 
convergence on compact subsets, see~\cite{wiweger} for the definition and basic properties of 
the mixed topology.\ The mixed topology plays a crucial role in our analysis for several reasons.\
First, since equation~\eqref{eq:Sn} below is derived from Arzel\`a--Ascoli's theorem, we can 
only expect uniform convergence on compact subsets but not $\|\cdot\|_\kappa$-convergence.\ 
Second, the results in Section 2.3 rely on a comparison principle in~\cite{BDKN} which uses 
the mixed topology.\ Third, the dual space of $\Ck(\Rd)$ endowed with the mixed topology 
coincides with the space of all Borel measures with finite variation for which $1/\kappa$ 
is integrable, see~\cite{GNR} and the references therein.\ Fourth, while the mixed topology
is not metrizable, for convex monotone operators and closed subsets $X\subset\Rd$, continuity
w.r.t.\ the mixed topology is equivalent to continuity from above and sequential continuity,
see~\cite{delbaen2,nendel} for equivalent descriptions of continuity properties for monotone 
functionals and operators in the mixed topology.\ In addition, convergence of sequences in 
the mixed topology can be characterized as follows:\ a sequence $(f_n)_{n\in\N}\subset\Ck(X)$ 
converges to $f\in\Ck(X)$ w.r.t. the mixed topology if and only if 
\begin{equation} \label{eq:seqcov}
 \sup_{n\in\N}\|f_n\|_{\kappa,X}<\infty \quad\mbox{and}\quad \lim_{n\to\infty}\|f-f_n\|_{\infty,K}=0
\end{equation}
for all compact subsets $K\Subset X$, where $\|f\|_{\infty,K}:=\sup_{x\in K}|f(x)|$. 
For a proof, we refer to~\cite[Proposition~A.4]{GNR}. Subsequently, if not stated otherwise, 
all limits in $\Ck(X)$ are taken w.r.t.\ the mixed topology and compact subsets are denoted 
by $K\Subset X$.\ Finally, the mixed topology belongs to the class of strict topologies, 
see~\cite{Haydon,Kunze,Sentilles}.

\subsection{Stability of convex monotone semigroups}

Let $\R_+:=\{x\in\R\colon x\geq 0\}$.

\begin{definition}
 Let $\T\subset\R_+$ satisfying $0\in\T$ and $s+t\in\T$ for all $s,t\in\T$ and $X\subset\Rd$.\
 A convex monotone semigroup on $\Ck(X)$ is a family $(S(t))_{t\in\T}$ of operators 
 $S(t)\colon\Ck(X)\to\Ck(X)$ such that
 \begin{enumerate}
  \item[(i)] $S(t)$ is convex and monotone with $S(t)f_n\downarrow 0$ for all $t\in \T$ and $f_n\downarrow 0$,
  \item[(ii)] $S(0)f=f$ and $S(s+t)f=S(s)S(t)f$ for all $s,t\in\T$ and $f\in\Ck(X)$,
  \item[(iii)] $\sup_{t\in [0,T]\cap\T}\|S(t)\frac{r}{\kappa}\|_{\kappa,X}<\infty$ for all $r,T\geq 0$.
 \end{enumerate}
 Moreover, the semigroup is called strongly continuous if $\T=\R_+$ and
 $f=\lim_{t\downarrow 0}S(t)f$ for all $f\in\Ck(X)$. In this case, 
 the generator of the semigroup is defined by
 \[ A\colon D(A)\to\Ck(X),\; f\mapsto\lim_{h\downarrow 0}\frac{S(h)f-f}{h}, \]
 where the domain $D(A)$ consists of all $f\in\Ck(X)$ such that the previous limit exists.
\end{definition}

Due to the convexity, the continuity from above at zero extends to continuity from above at
all points, i.e., it holds $S(t)f_n\downarrow S(t)f$ for all $(f_n)_{n\in\N}\subset\Ck(X)$
and $f\in\Ck(X)$ with $f_n\downarrow f$.\ In particular, we obtain $S(t)0=0$ for all $t\geq 0$.\ In addition, for every $\epsilon>0$, 
$r,T\geq 0$ and $K\Subset X$, there exist $c\geq 0$ and $K'\Subset X$ with 
\[ \|S(t)f-S(t)g\|_{\infty,K}\leq c\|f-g\|_{\infty,K'}+\epsilon \]
for all $t\in [0,T]\cap\T$ and $f,g\in B_{\Ck(X)}(r)$.\ This follows from the continuity from above,
Dini's theorem and~\cite[Lemma~C.2]{BDKN}.
Moreover, if $(S(t))_{t\geq 0}$ is strongly continuous, it follows from~\cite[Lemma~3.2]{BDKN}
that $S(t)f=\lim_{s\to t}S(s)f$ for all $t\geq 0$ and $f\in\Ck(X)$. We point out that our notion of a strongly continuous convex monotone semigroup coincides with~\cite[Definition 3.1]{GNR}. 
In particular, we refer to~\cite[Section 4]{GNR} for a series of examples in the linear case.

In order to provide a framework that covers discretizations in time and space,
we introduce the following sets and notation.\
Let $(\T_n)_{n\in\N}$ and $(X_n)_{n\in\N}$ be sequences of subsets $\T_n\subset\R_+$ 
and $X_n\subset\Rd$ such that, for every $t\geq 0$ and $x\in\Rd$, there exist 
$t_n\in\T_n$ and $x_n\in X_n$ with $t_n\to t$ and $x_n\to x$.\ Let $s\pm t\in\T_n$ for all $n\in\N$ and $s,t\in\T_n$ with $s\geq t$, which implies that $0\in \T_n$ for all $n\in \N$.\
If $h_n:=\inf(\T_n\setminus \{0\})>0$, it follows that $\{kh_n\colon k\in \N_0\}\subset\T_n$. In addition, the sets~$X_n$ 
are supposed to be closed and to satisfy $x+y\in X_n$ for all $n\in\N$ and $x,y\in X_n$.\
Typical examples for the sets~$\T_n$ and~$X_n$ are grids with mesh size tending to zero.
For a sequence $(f_n)_{n\in\N}$ with $f_n\in\Ck(X_n)$ and $f\in\Ck(\Rd)$, we define 
$f:=\lim_{n\to\infty}f_n$ if and only if 
\[ \sup_{n\in\N}\|f_n\|_{\kappa,X_n}<\infty \quad\mbox{and}\quad
\lim_{n\to\infty}\|f-f_n\|_{\infty, K_n}=0 \]
for all $K\Subset\Rd$ with $K\cap X_n\neq\emptyset$ for all $n\in\N$, where $K_n:=K\cap X_n$.\
The continuity of~$f$ guarantees that the previous limit is unique. 
In case that $X_n=\Rd$, this definition is consistent with the convergence of sequences in equation~\eqref{eq:seqcov}.\ A sequence $(f_n)_{n\in\N}$ with $f_n\in\Ck(X_n)$
is called uniformly equicontinuous, if for every $\epsilon>0$, there exists 
$\delta>0$ with $|f_n(x)-f_n(y)|<\epsilon$ for all $n\in\N$ and $x,y\in X_n$ with $|x-y|<\delta$.

\begin{definition} \label{def:Dinfty}
 Let $(S_n)_{n\in\N}$ be a sequence such that $(S_n(t))_{t\in\T_n}$ is a convex monotone 
 semigroup on $\Ck(X_n)$  for all $n\in\N$.\
 The asymptotic domain $\D_\infty$ consists of all $f\in\Ck(\Rd)$ such that there exist
 $g\in\Ck(\Rd)$ and $f_n\in\Ck(X_n)$ with $f_n\to f$ satisfying the following conditions:
 \begin{enumerate}
  \item[(i)] There exist $t_n\in\T_n\backslash\{0\}$ with $\sup_{n\in\N}t_n<\infty$ and
   \begin{equation} \label{eq:Linfty}
    \sup_{n\in\N}\sup_{t\in (0,t_n]\cap\T_n}\left\|\frac{S_n(t)f_n-f_n}{t}\right\|_{\kappa,X_n}< \infty. 
   \end{equation}
  \item[(ii)] For every $K\Subset\Rd$, there exist $h_n\in (0,t_n]\cap\T_n$ with $h_n\to 0$ and
   \begin{equation} \label{eq:Dinfty}
    \lim_{n\to\infty}\left\|\frac{S_n(h_n)f_n-f_n}{h_n}-g\right\|_{\infty,K_n}=0. 
   \end{equation}
 \end{enumerate}
\end{definition}

\begin{assumption} \label{ass:Sn}
 Let $(S_n)_{n\in\N}$ be a sequence such that $(S_n(t))_{t\in\T_n}$ is a convex monotone 
 semigroup on $\Ck(X_n)$ for all $n\in\N$ satisfying the following conditions:
 \begin{enumerate}
  \item[(i)] For every $r,T\geq 0$, there exists $c\geq 0$ with
   \[ \|S_n(t)f-S_n(t)g\|_{\kappa,X_n}\leq c\|f-g\|_{\kappa, X_n} \]
   for all $n\in\N$, $t\in [0,T]\cap\T_n$ and $f,g\in B_{\Ck(X_n)}(r)$. 
  \item[(ii)] For every $\epsilon>0$, $r,T\geq 0$ and $K\Subset\Rd$, there exist $c\geq 0$
   and $K'\Subset\Rd$ with
   \[ \|S_n(t)f-S_n(t)g\|_{\infty,K_n}\leq c\|f-g\|_{\infty,K'_n}+\epsilon \]
   for all $n\in\N$, $t\in [0,T]\cap\T_n$ and $f,g\in B_{\Ck(X_n)}(r)$. 
  \item[(iii)] There exist countable dense sets $\D_0\subset\Ck(\Rd)$ and $\T\subset\R_+$ 
   such that, for every $f\in\D_0$, there exist $\bar{f}_n\in\Ck(X_n)$ with $\bar{f}_n\to f$ 
   and $\|\bar{f}_n\|_\kappa\leq\|f\|_\kappa$ for all $n\in\N$
   satisfying inequality~\eqref{eq:Linfty} and, for every $t\in\T$, there exist $\bar{t}_n\in\T_n$ with $\bar{t}_n\to t$ such that 
   the sequence $(S_n(\bar{t}_n)\bar{f}_n)_{n\in\N}$ is uniformly equicontinuous.\ 
   In addition, for every $f\in\Ck(\Rd)$, there exists $(f_n)_{n\in\N}\subset\D_0$
   with $f_n\to f$ and $\|f_n\|_\kappa\leq\|f\|_\kappa$ for all $n\in\N$.
 \end{enumerate}
\end{assumption}

\begin{remark}\label{rem.cond.iii}
 Let $\Cci(\Rd)$ be the space of all infinitely differentiable functions $f\colon\Rd\to\R$ 
 with compact support.\ We point out that there exists a countable set $\D_0\subset \Cci(\R^d)$ 
 such that, for every $f\in\Ck(\Rd)$, there exists $(f_n)_{n\in\N}\subset\D_0$ with $f_n\to f$ 
 and $\|f_n\|_\kappa\leq\|f\|_\kappa$ for all $n\in\N$.\ Indeed, for every $n\in\N$, there exists a 
 countable set $\D_n\subset\Cci(\Rd)$ such that, for every continuous function $f\colon\Rd\to\R$ with
 $\supp(f)\subset B_{\Rd}(n)$, there exists $(f_k)_{k\in\N}\subset\D_n$ with $\|f-f_k\|_\infty\to 0$.\
 Moreover, there exist $(\phi_n)_{n\in\N}\subset\Cci(\Rd)$ with $\phi_n\equiv 1$ on $B_{\Rd}(n-1)$ 
 and $\supp(\phi_n)\subset B_{\Rd}(n)$ for all $n\in\N$.\ Let $f\in\Ck(\Rd)$ and define 
 $\tilde{f}_n:=(1-\frac{1}{n})f\phi_n$ for all $n\in\N$.\ For every $n\in\N$, we choose 
 $f_n\in\D_n$ with $\|f_n-\tilde{f}_n\|_\infty\leq\frac{\|f\|_\kappa}{n\|\kappa\|_\infty}$.  
 It follows that $f_n\to f$ and 
 \[ \|f_n\|_\kappa\leq\|\kappa\|_\infty\|f_n-\tilde{f}_n\|_\infty+\|\tilde{f}_n\|_\kappa
    \leq\|f\|_\kappa \quad\mbox{for all } n\in\N, \]
 which shows that $\D_0:=\bigcup_{n\in\N}\D_n\subset \Cci(\R^d)$ satisfies the second part of 
 condition~(iii).\ Hence, Assumption~\ref{ass:Sn}(iii) is satisfied if, for every $f\in\Cci(\R^d)$ 
 and $t\geq 0$, there exist $\bar{t}_n\in\T_n$ with $\bar{t}_n\to t$ and $\bar{f}_n\in\Ck(X_n)$ 
 with $\bar{f}_n\to f$ and $\|\bar{f}_n\|_\kappa\leq\|f\|_\kappa$ for all $n\in\N$ such that 
 inequality~\eqref{eq:Linfty} is valid and the sequence 
 $(S_n(\bar{t}_n)\bar{f}_n)_{n\in\N}$ is uniformly equicontinuous.
\end{remark}

\begin{theorem} \label{thm:Sn}
 Let $(S_n)_{n\in\N}$ be a sequence satisfying Assumption~\ref{ass:Sn}.
 Then, there exist a strongly continuous convex monotone semigroup $(S(t))_{t\geq 0}$ on $\Ck(\Rd)$
 with generator $A\colon D(A)\to\Ck(\Rd)$ and a subsequence $(n_l)_{l\in\N}\subset\N$ with
 \begin{equation} \label{eq:Sn}
  S(t)f=\lim_{l\to\infty}S_{n_l}(t_{n_l})f_{n_l} \quad\mbox{for all } (f,t)\in\Ck(\Rd)\times\R_+,
 \end{equation}
 where $t_n\in\T_n$ with $t_n\to t$ and $f_n\in\Ck(X_n)$ with $f_n\to f$ are arbitrary.\ In addition,
 the following statements are valid:
 \begin{enumerate} 
  \item[(i)] It holds $\D_\infty\subset D(A)$. Moreover, for every $f\in\D_\infty$ 
   and $K\Subset\Rd$, 
   \begin{equation} \label{eq:thm.main.1}
    \lim_{n\to\infty}\left\|\frac{S_n(h_n)f_n-f_n}{h_n}-Af\right\|_{\infty,K_n}=0,
   \end{equation}
   where $(f_n)_{n\in\N}$ and $(h_n)_{n\in\N}$ satisfy the conditions of Definition~\ref{def:Dinfty}.
  \item[(ii)] For every $r,T\geq 0$, there exists $c\geq 0$ with
   \[ \|S(t)f-S(t)g\|_\kappa\leq c\|f-g\|_\kappa 
    \quad\mbox{for all } t\in [0,T] \mbox{ and } f,g\in B_{\Ck(\Rd)}(r). \]
  \item[(iii)] For every $\epsilon>0$, $r,T\geq 0$ and $K\Subset\Rd$, there exist $c\geq 0$ 
   and $K'\Subset\Rd$ with
   \[ \|S(t)f-S(t)g\|_{\infty,K}\leq c\|f-g\|_{\infty,K'}+\epsilon \]
   for all $t\in [0,T]$ and $f,g\in B_{\Ck(\Rd)}(r)$. 
 \end{enumerate}
\end{theorem}
\begin{proof} 
 First, we construct the operators $S(t)\colon\Ck(\Rd)\to\Ck(\Rd)$ for all $t\geq 0$.\
 By Assumption~\ref{ass:Sn}(iii), there exist countable dense sets $\D_0\subset\Ck(\Rd)$ 
 and $\T\subset\R_+$ such that, for every $f\in\D_0$ and $t\in\T$, there exist $\bar{f}_n\in\Ck(X_n)$ 
 with $\bar{f}_n\to f$ and $\|\bar{f}_n\|_\kappa\leq\|f\|_\kappa$ for all $n\in\N$
 satisfying inequality~\eqref{eq:Linfty} and $\bar{t}_n\in\T_n$ with $\bar{t}_n\to t$
 such that the sequence $(S_n(\bar{t}_n)\bar{f}_n)_{n\in\N}$ is uniformly equicontinuous.\
 Since $(S_n(\bar{t}_n)\bar{f}_n)_{n\in\N}$ is bounded due to Assumption~\ref{ass:Sn}(i), 
 Lemma~\ref{lem:AA} and a diagonalization argument imply the existence of a subsequence 
 $(n_l)_{l\in\N}\subset\N$ such that the limit
 \begin{equation} \label{eq:Snl1}
  S(t)f:=\lim_{l\to\infty}S_{n_l}(\bar t_{n_l})\bar f_{n_l}\in\Ck(\Rd)
 \end{equation}
 exists for all $(f,t)\in\D_0\times\T$.\ In order to extend $(S(t))_{t\in\T}$ 
 to arbitrary points in time, we fix $f\in\D_0$ and $T\geq 0$.
 Let $s,t\in [0,T]\cap\T$ with $s<t$ and choose the corresponding $\bar{s}_n,\bar{t}_n\in\T_n$ 
 with $\bar{s}_n\to s$ and $\bar{t}_n\to t$ so that $\bar{s}_n\leq\bar{t}_n$ for all
 sufficiently large $n\in\N$.\ By Assumption~\ref{ass:Sn}(i) and inequality~\eqref{eq:Linfty}, 
 there exist $c_1, c_2\geq 0$ and $h_n\in\T_n\backslash\{0\}$ with
 \begin{align}
  &\|S_n(\bar{t}_n)\bar{f}_n-S_n(\bar{s}_n)\bar{f}_n\|_{\kappa,X_n} 
    =\|S_n(\bar{s}_n)S_n(\bar{t}_n-\bar{s}_n)\bar{f}_n-S_n(\bar{s}_n)\bar{f}_n\|_{\kappa,X_n} \nonumber \\
  &\leq c_2\|S_n(\bar{t}_n-\bar{s}_n)\bar{f}_n-\bar{f}_n\|_{\kappa,X_n} \nonumber \\
  &\leq c_2\|S_n(k_n h_n)S_n(\bar{t}_n-\bar{s}_n-k_n h_n)\bar{f}_n-S_n(k_n h_n)\bar{f}_n\|_{\kappa,X_n} \nonumber \\
  &\quad\; +c_2\sum_{i=0}^{k_n-1}\|S_n(ih_n)S_n(h_n)\bar{f}_n-S_n(ih_n)\bar{f}_n\|_{\kappa,X_n} \nonumber \\
  &\leq c_1c_2^2(\bar{t}_n-\bar{s}_n-k_nh_n)+c_2^2k_n\|S_n(h_n)\bar{f}_n-\bar{f}_n\|_{\kappa, X_n}
    \leq c(\bar{t}_n-\bar{s}_n), \label{eq:T0}
 \end{align}
 where $k_n:=\max\{k\in\N_0\colon kh_n\leq\bar{t}_n-\bar{s}_n\}$ and $c:=2c_1c_2^2$.\ 
 Equation~\eqref{eq:Snl1} implies
 \begin{equation} \label{eq:T1}
  \|S(s)f-S(t)f\|_\kappa\leq c|s-t| \quad\mbox{for all } s,t\in [0,T]\cap\T. 
 \end{equation}
 Let $t\in [0,T]$ and $(t_n)_{n\in\N}\subset [0,T]\cap\T$ with $t_n\to t$. 
 By inequality~\eqref{eq:T1}, the limit
 \begin{equation} \label{eq:T2}
  S(t)f:=\lim_{n\to\infty}S(t_n)f\in\Ck(\Rd)
 \end{equation}
 exists and does not depend on the choice of the approximating sequence $(t_n)_{n\in\N}$.\
 In order to extend $(S(t))_{t\geq 0}$ to the whole space $\Ck(\Rd)$, we fix $t\geq 0$ and 
 $f\in\Ck(\Rd)$. Assumption~\ref{ass:Sn}(ii) and the previous arguments guarantee that, for every 
 $\epsilon>0$, $r\geq 0$ and $K\Subset\Rd$, there exist $c\geq 0$ and $K'\Subset\Rd$ 
 with
 \begin{equation} \label{eq:D}
  \|S(t)g_1-S(t)g_2\|_{\infty,K}\leq c\|g_1-g_2\|_{\infty,K'}+\epsilon
 \end{equation}
 for all $g_1, g_2\in B_{\Ck(\Rd)}(r)\cap\D_0$.\ By Assumption~\ref{ass:Sn}(iii), there exists
 $(f_n)_{n\in\N}\subset\D_0$ with $f_n\to f$ and $\|f_n\|_\kappa\leq\|f\|_\kappa$ for all $n\in\N$.\
 Since Assumption~\ref{ass:Sn}(i) and equation~\eqref{eq:T2} guarantee that the sequence 
 $(S(t)f_n)_{n\in\N}$ is bounded, inequality~\eqref{eq:D} implies that
 \[ S(t)f:=\lim_{n\to\infty}S(t)f_n\in\Ck(\Rd) \]
 exists and does not depend on the choice of the approximating sequence $(f_n)_{n\in\N}$.\
 By construction, the operators $S(t)\colon\Ck(\Rd)\to\Ck(\Rd)$ are convex and monotone with
 $S(t)0=0$ and satisfy the conditions~(ii) and~(iii).
	
 Second, we show that the family $(S(t))_{t\geq 0}$ is strongly continuous and satisfies
 \begin{equation} \label{eq:Snl2}
  S(t)f=\lim_{l\to\infty}S_{n_l}(\bar{t}_{n_l})f \quad\mbox{for all } (f,t)\in\Ck(\Rd)\times\T.
 \end{equation}
 To do so, let $f\in\Ck(\Rd)$, $t\geq 0$, $\epsilon>0$, $K\Subset\Rd$ and 
 $(f^k)_{k\in\N}\subset\D_0$ with $f^k\to f$ and $\|f^k\|_\kappa\leq\|f\|_\kappa$ for all $k\in\N$. 
 By the first part, there exist $c\geq 0$ and $K'\Subset\Rd$ with
 \[ \|S(s)g_1-S(s)g_2\|_{\infty,K}\leq c\|g_1-g_2\|_{\infty,K'}+\tfrac{\epsilon}{6} \]
 for all $s\in [0,t+1]$ and $g_1, g_2\in B_{\Ck(\Rd)}(r)$, where $r:=\|f\|_\kappa$.\ 
 Choose $k\in\N$ such that $6c\|f-f^k\|_{\infty,K'}\leq\epsilon$.\ By the inequalities~\eqref{eq:T1} 
 and~\eqref{eq:T2} there exists $\delta\in (0,1]$ with
 \begin{align*}
  \|S(s)f-S(t)f\|_{\infty,K}
  &\leq\|S(s)f-S(s)f^k\|_{\infty,K}+\|S(s)f^k-S(t)f^k\|_{\infty,K} \\
  &\quad\, +\|S(t)f^k-S(t)f\|_{\infty,K} \\
  &\leq 2c\|f-f^k\|_{\infty,K'}+\|S(s)f^k-S(t)f^k\|_{\infty,K}+\tfrac{\epsilon}{3}<\epsilon
 \end{align*}
 for all $s\geq 0$ with $|s-t|<\delta$.\ This shows the strong continuity.\ Now, let $t\in\T$.\
 Due to Assumption~\ref{ass:Sn}(ii), we can further assume that
 \begin{equation} \label{eq:Snl3} 
  \|S_n(\bar{t}_n)g_1-S_n(\bar{t}_n)g_2\|_{\infty,K_n}\leq c\|g_1-g_2\|_{\infty,K'}+\tfrac{\epsilon}{6}
 \end{equation}
 for all $n\in\N$ and $g_1, g_2\in B_{\Ck(\Rd)}(r)$.\ Choose $k\in\N$ such that 
 $12c\|f-f^k\|_{\infty, K'}\leq\epsilon$. Furthermore, by equation~\eqref{eq:Snl1}, there exists $l_0\in\N$ with 
 \begin{equation} \label{eq:Snl4}
  \|S(t)f^k-S_{n_l}(\bar{t}_{n_l})\bar{f}^k_{n_l}\|_{\infty,K_{n_l}}\leq\tfrac{\epsilon}{3}
  \quad\mbox{and}\quad
  \|f^k-\bar{f}^k_{n_l}\|_{\infty, K'_{n_l}}\leq\tfrac{\epsilon}{6c}\quad\mbox{for all } l\geq l_0. 
 \end{equation}
 We combine inequality~\eqref{eq:Snl3} and inequality~\eqref{eq:Snl4} to obtain 
 \begin{align*}
  \|S(t)f-S_{n_l}(\bar{t}_{n_l})f\|_{\infty,K_{n_l}}
  &\leq\|S(t)f-S(t)f^k\|_{\infty, K}+\|S(t)f^k-S_{n_l}(\bar{t}_{n_l})\bar{f}^k_{n_l}\|_{\infty,K_{n_l}} \\
  &\quad\; +\|S_{n_l}(\bar{t}_{n_l})\bar{f}^k_{n_l}-S_{n_l}(\bar{t}_{n_l})f\|_{\infty,K_{n_l}} \\
  &\leq 2c\|f-f^k\|_{\infty,K'}+ c\|f^k-\bar{f}^k_{n_l}\|_{\infty,K'_{n_l}}+\tfrac{2\epsilon}{3}<\epsilon
 \end{align*}
 for all $l\geq l_0$. This shows that $S(t)f=\lim_{l\to\infty}S_{n_l}(\bar{t}_{n_l})f$. 
	
 Third, we show that
 \begin{equation} \label{eq:conv}
  S(t)f=\lim_{l\to\infty}S_{n_l}(t_{n_l})f_{n_l}.
 \end{equation}
 for all $t\geq 0$, $f\in\Ck(\Rd)$, $f_n\in\Ck(X_n)$ with $f_n\to f$ and $t_n\in\T_n$
 with $t_n\to t$.
 We emphasize that the sequences $(t_n)_{n\in\N}$ and $(f_n)_{n\in\N}$ are arbitrary, 
 whereas the subsequence $(n_l)_{l\in\N}$ is as before.\
 Let $T:=\sup_{n\in\N}t_n$ and $r:=\sup_{n\in\N}c_\kappa\|f_n\|_{\kappa, X_n}$, 
 where $c_\kappa$ is given by equation~\eqref{eq:kappa}.\ Let $\epsilon>0$ and $K\Subset\Rd$.\
 By Assumption~\ref{ass:Sn}(ii) and the first part, there exist $c\geq 0$ and $K'\Subset\Rd$ with
 \begin{align} 
  \|S_n(s)g_1-S_n(s)g_2\|_{\infty, K_n} &\leq c\|g_1-g_2\|_{\infty, K_n'}+\tfrac{\epsilon}{7}, \nonumber \\
  \|S(t)g_1-S(t)g_2\|_{\infty, K} &\leq c\|g_1-g_2\|_{\infty, K'}+\tfrac{\epsilon}{7} \label{eq:conv1}
 \end{align}
 for all $n\in\N$, $s\in [0,T]\cap\T_n$ and $g_1, g_2\in B_{\Ck(X_n)}(r)$. 
 Moreover, by Assumption~\ref{ass:Sn}(iii), there exist $n_0\in\N$ and $g\in\D_0$ with $\|g\|_\kappa\leq r$ and
 \begin{equation} \label{eq:conv2}
  \max\{\|f-g\|_{\infty, K'}, \|f_n-g\|_{\infty, K_n'}\}\leq\tfrac{\epsilon}{7c}
  \quad\mbox{for all } n\geq n_0. 
 \end{equation}
 It follows from equation~\eqref{eq:T0} that there exists $\delta>0$ with
 \begin{equation} \label{eq:conv3}
  \|S_n(s_1)g-S_n(s_2)g\|_{\infty, K_n}\leq\tfrac{\epsilon}{7}
 \end{equation}  
 for all $n\in\N$ and $s_1, s_2\in [0,T]\cap\T_n$ with $|s_1-s_2|<\delta$.\
 Indeed, the corresponding proof does not rely on the particular choice of time points.\
 Furthermore, since $(S(t))_{t\geq 0}$ is strongly continuous, we can fix $s\in [0,T]\cap\T$ with
 $|s-t|<\delta$ and 
 \begin{equation} \label{eq:conv4}
  \|S(s)g-S(t)g\|_{\infty, K}\leq\tfrac{\epsilon}{7}.
 \end{equation} 
 By equation~\eqref{eq:Snl2}, there exists $l_0\in\N$ with $n_{l_0}\geq n_0$ and
 \begin{equation} \label{eq:conv5}
  \|S_{n_l}(\bar{s}_{n_l})g-S(s)g\|_{\infty, K_{n_l}}\leq\tfrac{\epsilon}{7} \quad\mbox{for all } l\geq l_0.
 \end{equation}
 For every $l\geq l_0$, we combine the inequalities~\eqref{eq:conv1}-\eqref{eq:conv5} to obtain 
 \begin{align*}
  &\|S(t)f-S_{n_l}(t_{n_l})f_{n_l}\|_{\infty,K_{n_l}} \\
  &\leq\|S(t)f-S(t)g\|_{\infty, K}+\|S(t)g-S(s)g\|_{\infty,K}+\|S(s)g-S_{n_l}(\bar{s}_{n_l})g\|_{\infty,K_{n_l}} \\
  &\quad\; +\|S_{n_l}(\bar{s}_{n_l})g-S_{n_l}(t_{n_l})g\|_{\infty,K_{n_l}}
    +\|S_{n_l}(t_{n_l})g-S_{n_l}(t_{n_l})f_{n_l}\|_{\infty,K_{n_l}} \\
  &\leq c\|f-g\|_{\infty,K}+c\|g-f_{n_l}\|_{\infty,K_{n_l}'}+\tfrac{5\epsilon}{7}\leq\epsilon.
 \end{align*}
	
 Fourth, we show that $(S(t))_{t\geq 0}$ is a semigroup.\ To do so, let $f\in\Ck(\Rd)$, $\epsilon>0$ 
 and $K\Subset\Rd$. Due to the first part, there exist $c_1\geq 0$ and $K'\Subset\Rd$ with $K\subset K'$ and 
 \[ \|S(0)f-S(0)g\|_{\infty,K}\leq c_1\|f-g\|_{\infty,K'}+\epsilon \]
 for all $g\in\Ck(\Rd)$ with $\|g\|_\kappa\leq\|f\|_\kappa$.\ By Assumption~\ref{ass:Sn}(iii),
 we can choose $g\in\D_0$ with $2(c_1+1)\|f-g\|_{\infty,K'}\leq\epsilon$ and $\|g\|_\kappa\leq\|f\|_\kappa$.\
 Let $(t^k)_{k\in\N}\subset\T$ with $t^k\to 0$. Due to inequality~\eqref{eq:T0} and inequality~\eqref{eq:T1},
  it holds
  \begin{align*}
  \|S(0)f-f\|_{\infty,K}
  &\leq\|S(0)f-S(0)g\|_{\infty,K}+\|S(0)g-S(t^k)g\|_{\infty,K} \\
  &\quad\; +\|S(t^k)g-S_{n_l}(\bar{t}^k_{n_l})\bar{g}_{n_l}\|_{\infty,K_{n_l}}
    +\|S_{n_l}(\bar{t}^k_{n_l})\bar{g}_{n_l}-\bar{g}_{n_l}\|_{\infty,K_{n_l}} \\
  &\quad\; +\|\bar{g}_{n_l}-g\|_{\infty,K}+\|f-g\|_{\infty,K} \\
  &\leq (c_1+1)\|f-g\|_{\infty,K'}+c_2(t^k+\bar{t}^k_{n_l})+\|\bar{g}_{n_l}-g\|_{\infty,K} \\
  &\quad\; +\|S(t^k)g-S_{n_l}(\bar{t}^k_{n_l})\bar{g}_{n_l}\|_{\infty,K_{n_l}}.
 \end{align*}
 Choosing first $k\in\N$ and then $l\in\N$ sufficiently large, we obtain from equation~\eqref{eq:Snl1} 
 that $\|S(0)f-f\|_{\infty,K}\leq\epsilon$.\ This shows that $S(0)f=f$.\ Now, let $s,t\geq 0$ and 
 choose $s_n, t_n\in\T_n$ with $s_n\to s$ and $t_n\to t$. It holds
	\begin{align*}
		S(s+t)f&-S(s)S(t)f =\big(S(s+t)f-S_{n_l}(s_{n_l}+t_{n_l})f\big)\\
		&+\big(S_{n_l}(s_{n_l})S_{n_l}(t_{n_l})f-S_{n_l}(s_{n_l})S(t)f\big) +\big(S_{n_l}(s_{n_l})S(t)f-S(s)S(t)f\big)
	\end{align*}
    for all $l\in\N$. Equation~\eqref{eq:conv} implies that the first and third term on the right-hand side 
	converge to zero as $l\to\infty$.\ Moreover, for every $\epsilon>0$ and $K\Subset\Rd$, 
	it follows from Assumption~\ref{ass:Sn}(i) and~(ii) that there exist $c\geq 0$ and 
	$K'\Subset\Rd$ with
	\[ \|S_{n_l}(s_{n_l})S_{n_l}(t_{n_l})f-S_{n_l}(s_{n_l})S(t)f\|_{\infty,K_{n_l}}
	\leq c\|S_{n_l}(t_{n_l})f-S(t)f\|_{\infty,K'_{n_l}}+\epsilon \]
	for all $l\in\N$. Hence, we can use equation~\eqref{eq:conv} again to obtain
	\[ \lim_{l\to\infty}\big(S_{n_l}(s_{n_l})S_{n_l}(t_{n_l})f-S_{n_l}(s_{n_l})S(t)f\big)=0 \] 
    This shows that $S(s+t)f=S(s)S(t)f$.
	
	Fifth, we show that $\D_\infty\subset D(A)$. Furthermore, we show that 
	\begin{equation} \label{eq:gen}
		\lim_{n\to\infty}\left\|\frac{S_n(h_n)f_n-f_n}{h_n}-Af\right\|_{\infty,K_n}=0
        \quad\mbox{for all } f\in\D_\infty \mbox{ and } K\Subset\Rd,
	\end{equation}
	where $(h_n)_{n\in\N}$ and $(f_n)_{n\in\N}$ are as in Definition~\ref{def:Dinfty}.\ 
    Let $f\in\D_\infty$ and $K\Subset\Rd$.\ Choose $(f_n)_{n\in\N}$ and 
    $(t_n)_{n\in\N}$ such that the conditions of Definition~\ref{def:Dinfty} are satisfied.\
    Let $\epsilon>0$ and $T:=\sup_{n\in\N}t_n$. By Assumption~\ref{ass:Sn}(i), there exists 
    $\lambda_0\in (0,1]$ with
	\begin{equation} \label{eq:gen1}
		\sup_{n\in\N}\sup_{t\in [0,T]\cap\T_n}\lambda_0\|S_n(t)f_n\|_{\kappa, X_n}
		\leq\inf_{x\in K}\frac{\kappa(x)\epsilon}{3}. 
	\end{equation}
	Let $\lambda\in (0,\lambda_0]$ with $\lambda T\leq\lambda_0$. Inequality~\eqref{eq:Linfty} implies
	\begin{equation} \label{eq:gen2}
		r:=\sup_{n\in\N}\sup_{h\in (0,t_n]\cap\T_n}\left\|\frac{S_n(h)f_n-f_n}{\lambda h}+f_n\right\|_{\kappa, X_n}<\infty.
	\end{equation}
	By Assumption~\ref{ass:Sn}(ii), there exist $c\geq 0$ and $K'\Subset\Rd$ with
	\begin{equation} \label{eq:gen3}
		\|S_n(t)g_1-S_n(t)g_2\|_{\infty, K_n}\leq c\|g_1-g_2\|_{\infty, K'_n}+\tfrac{\epsilon}{3}
	\end{equation}
	for all $n\in\N$, $t\in [0,T]\cap\T_n$ and $g_1, g_2\in B_{\Ck(X_n)}(r)$.\ Choose $g\in\Ck(\Rd)$
	as in Definition~\ref{def:Dinfty} and $h_n\in (0,t_n]\cap\T_n$ with $h_n\to 0$ such that 
    equation~\eqref{eq:Dinfty} is valid for the compact set $K'$.
	In particular, there exists $n_0\in\N$ with 
	\begin{equation} \label{eq:gen4}
		\left\|\frac{S_n(h_n)f_n-f_n}{h_n}-g\right\|_{\infty, K'_n}\leq\frac{\epsilon}{3c}
		\quad\mbox{for all } n\geq n_0. 
	\end{equation}
	In the sequel, all functions are evaluated at a fixed point $x\in K$.\ For every $n\geq n_0$
	and $k\in\N$ with $kh_n\leq T$, we use Lemma~\ref{lem:lambda} and inequality~\eqref{eq:gen1} to estimate
	\begin{align*}
		&S_n(kh_n)f_n-f_n 
		=\sum_{i=1}^k \big(S_n((i-1)h_n)S_n(h_n)f_n-S_n((i-1)h_n)f_n\big) \\
		&\leq\lambda h_n\sum_{i=1}^k \left(S_n((i-1)h_n)\left(\frac{S_n(h_n)f_n-f_n}{\lambda h_n}+f_n\right)
		-S_n((i-1)h_n)f_n\right) \\
		&\leq\lambda h_n\sum_{i=1}^k S_n((i-1)h_n)\left(\frac{S_n(h_n)f_n-f_n}{\lambda h_n}+f_n\right)
		+\frac{kh_n\epsilon}{3}. 
	\end{align*}
	Furthermore, it follows from equation~\eqref{eq:gen2} and inequality~\eqref{eq:gen3} that
	\begin{align*}
		&\lambda h_n\sum_{i=1}^k S_n((i-1)h_n)\left(\frac{S_n(h_n)f_n-f_n}{\lambda h_n}+f_n\right)
		+\frac{kh_n\epsilon}{3} \\
		&=\lambda h_n\sum_{i=1}^k S_n((i-1)h_n)\left(\frac{1}{\lambda}g+f_n\right)+\frac{kh_n\epsilon}{3} \\
		&\quad\; +\lambda h_n\sum_{i=1}^k \left(S_n((i-1)h_n)\left(\frac{S_n(h_n)f_n-f_n}{\lambda h_n}+f_n\right)
		-S_n((i-1)h_n)\left(\frac{1}{\lambda}g+f_n\right)\right) \\
		&\leq\lambda h_n\sum_{i=1}^k S_n((i-1)h_n)\left(\frac{1}{\lambda}g+f_n\right)
		+ckh_n\left\|\frac{S_n(h_n)f_n-f_n}{h_n}-g\right\|_{\infty, K'_n}+\frac{2kh_n\epsilon}{3}.
	\end{align*}
	Combining the previous estimates with inequality~\eqref{eq:gen4} yields 
	\begin{equation} \label{eq:gen5}
		S_n(kh_n)f_n-f_n \leq\lambda h_n\sum_{i=1}^k S_n((i-1)h_n)\left(\frac{1}{\lambda}g+f_n\right)
		+kh_n\epsilon.
	\end{equation}
	Let $t\in [0,T]$, $k_n:=\max\{k\in\N_0\colon kh_n\leq t\}$ and 
	$i_n^s:=\max\{i\in\N\colon (i-1)h_n\leq s\}$ for all  $s\in [0,t]$ and $n\in\N$.\ Since 
	$k_nh_n\to t$ and $i_n^s\to s$ for all $s\in [0,t]$, it follows from 
    equation~\eqref{eq:conv}, inequality~\eqref{eq:gen5} and Assumption~\ref{ass:Sn}(i) that
	\begin{align}
		&S(t)f-f =\lim_{l\to\infty}\big(S_{n_l}(k_{n_l}h_{n_l})f_{n_l}-f_{n_l}\big) \nonumber \\
		&\leq\lim_{l\to\infty}\lambda\int_0^t\sum_{i=1}^{k_{n_l}}
		S_{n_l}((i-1)h_{n_l})\left(\frac{1}{\lambda}g+f_{n_l}\right)\one_{[(i-1)h_{n_l}, ih_{n_l})}(s)\,\d s 
		+\epsilon t \nonumber \\
		&=\lambda\int_0^t \lim_{l\to\infty}S_{n_l}((i_{n_l}^s-1)h_{n_l})\left(\frac{1}{\lambda}g+f_{n_l}\right)\d s
		+\epsilon t=\lambda\int_0^t S(s)\left(\frac{1}{\lambda}g+f\right)\d s+\epsilon t. \label{eq:gen6}
	\end{align}
	The previous estimate holds uniformly for all $\lambda\in (0,\lambda_0]$ with $\lambda T\leq\lambda_0$ and $t\in [0,T]$.\ Regarding the lower bound, we observe that
	\begin{align*}
		S_n(kh_n)f_n&-f_n = -\sum_{i=1}^k \big(S_n((i-1)h_n)f_n-S_n((i-1)h_n)S_n(h_n)f_n\big) \\
		&\geq -\lambda h_n\sum_{i=1}^k \left(S_n((i-1)h_n)
		\left(S_n(h_n)f_n-\frac{S_n(h_n)f_n-f_n}{\lambda h_n}\right)-S_n(ih_n)f_n\right)
	\end{align*}
	for all $k,n\in\N$ and $\lambda>0$ with $\lambda h_n\leq 1$. Hence, there exists $\lambda_1\in (0,1]$ with 
	\begin{equation} \label{eq:gen7}
		S(t)f-f\geq -\lambda\int_0^t S(s)\left(-\frac{1}{\lambda}g+f\right)\d s-\epsilon t
	\end{equation}
	for all $\lambda\in (0,\lambda_1]$ with $\lambda T\leq\lambda_1$ and $t\in [0,T]$.\ Since inequality~\eqref{eq:gen6} and inequality~\eqref{eq:gen7} hold uniformly for all $x\in K$, 
    the strong continuity of $(S(t))_{t\geq 0}$ implies
	\[ \lim_{h\downarrow 0}\left\|\frac{S(h)f-f}{h}-g\right\|_{\infty, K}=0. \]
	We conclude the proof by observing that the function $g\in\Ck(\Rd)$ neither depends on 
    the choice of $K'\Subset\Rd$ nor on $\epsilon>0$. 
\end{proof}

Denote by $\UCK(X)$ the $\|\cdot\|_\kappa$-closure of $\Lipb(X)$ in $\Ck(X)$
for all $X\subset\Rd$, where $\Lipb(X)$ consists of all bounded Lipschitz functions $f\colon X\to\R$.

\begin{corollary} \label{cor:Sn}
 Let $(S_n)_{n\in\N}$ be a sequence that satisfies Assumption~\ref{ass:Sn} and $(S(t))_{t\geq 0}$ be a strongly continuous convex monotone semigroup on $\Ck(\Rd)$
 with generator $A\colon D(A)\to\Ck(\Rd)$ for a subsequence $(n_l)_{l\in\N}\subset\N$ as in Theorem \ref{thm:Sn}.\ Moreover,
 for every $f\in\Cbi(\Rd)$, there exist $f_n\in\Ck(X_n)$ and $t_n\in\T_n\backslash\{0\}$ with 
 $f_n\to f$ and
  \[ \sup_{n\in\N}\sup_{t\in (0,t_n]\cap\T_n}\left\|\frac{S_n(t)f_n-f_n}{t}\right\|_{\kappa,X_n}< \infty. \] 
 Then, for every $f\in\UCK(\Rd)$ such that there exist $f_n\in\Ck(X_n)$, $h_n\in \T_n\setminus\{0\}$ and $g\in\UCK(\Rd)$
 with $f_n\to f$, $h_n\to 0$ and 
 \[ \lim_{n\to\infty}\left\|\frac{S_n(h_n)f_n-f_n}{h_n}-g\right\|_{\kappa,X_n}=0, \]
 it follows that $f\in D(A)$ with
 \[ \lim_{h\downarrow0}\left\|\frac{S(h)f-f}{h}-Af\right\|_\kappa=0. \]
\end{corollary}
\begin{proof}
 Since $\UCK(\Rd)$ is the $\|\cdot\|_\kappa$-closure of $\Cbi(\Rd)$, one can argue
 similar to the proof of Theorem~\ref{thm:Sn} to show that
 \[ \lim_{t\to 0}\|S(t)f-f\|_\kappa=0 \quad\text{for all } f\in\UCK(\Rd). \]
 In particular, we obtain
 \[ \lim_{t\downarrow 0}\left\|\frac{1}{t}\int_0^t S(s)f\,\d s-f\right\|_\kappa=0 
    \quad\text{for all } f\in\UCK(\Rd). \]
 Hence, the claim follows similarly to the proof of Theorem~\ref{thm:Sn}.
\end{proof}

In case that $\T_n=\R_+$ for all $n\in\N$, the rather complicated Definition~\ref{def:Dinfty}
is not necessary.\ Subsequently, the generator of $(S_n(t))_{t\geq 0}$ is denoted by $A_n$ for all $n\in\N$.

\begin{assumption} \label{ass:Sn2}
 Let $(S_n)_{n\in\N}$ be a sequence of strongly continuous convex monotone semigroups 
 $(S_n(t))_{t\geq 0}$ on $\Ck(X_n)$ satisfying the following conditions:
 \begin{enumerate}
  \item[(i)] For every $r,T\geq 0$, there exists $c\geq 0$ with
   \[ \|S_n(t)f-S_n(t)g\|_{\kappa,X_n}\leq c\|f-g\|_{\kappa,X_n} \]
   for all $n\in\N$, $t\in [0,T]$ and $f,g\in B_{\Ck(X_n)}(r)$. 
  \item[(ii)] For every $\epsilon>0$, $r,T\geq 0$ and $K\Subset\Rd$, there exist $c\geq 0$
   and $K'\Subset\Rd$ with
   \[ \|S_n(t)f-S_n(t)g\|_{\infty,K_n}\leq c\|f-g\|_{\infty,K'_n}+\epsilon \]
   for all $n\in\N$, $t\in [0,T]$ and $f,g\in B_{\Ck(X_n)}(r)$. 
  \item[(iii)] There exists a countable set $\D_0\subset\Ck(\Rd)$ such that, 
   for every $f\in\D_0$, there exist $\bar{f}_n\in D(A_n)$ with $\bar{f}_n\to f$ 
   and $\|\bar{f}_n\|_\kappa\leq\|f\|_\kappa$ for all $n\in\N$
   such that $(A_n\bar{f}_n)_{n\in\N}$ converges to a limit in $\Ck(\Rd)$ and,
   for every $t\geq 0$, there exist $\bar{t}_n\in\T_n$ with $\bar{t}_n\to t$ such that
 $(S_n(\bar{t}_n)\bar{f}_n)_{n\in\N}$ is uniformly equicontinuous.\
   Moreover, for every $f\in\Ck(\Rd)$, there exists $(f_n)_{n\in\N}\subset\D_0$ 
   with $f_n\to f$ and $\|f_n\|_\kappa\leq\|f\|_\kappa$ for all $n\in\N$.
 \end{enumerate}
\end{assumption}

\begin{theorem} \label{thm:Sn2}
 Let $(S_n)_{n\in\N}$ be a sequence satisfying Assumption~\ref{ass:Sn2}.\ Then, there exist
 a strongly continuous convex monotone semigroup $(S(t))_{t\geq 0}$ on $\Ck(\Rd)$ 
 with generator $A\colon D(A)\to\Ck(\Rd)$ and a subsequence $(n_l)_{l\in\N}\subset\N$ with
 \[ S(t)f=\lim_{l\to\infty}S_{n_l}(t_{n_l})f_{n_l} \quad\mbox{for all } (f,t)\in\Ck(\Rd)\times\R_+, \]
 where $t_n\in\R_+$ with $t_n\to t$ and $f_n\in\Ck(X_n)$ with $f_n\to f$ are arbitrary.\
 In addition, the following statements are valid:
 \begin{enumerate}
  \item[(i)] For every $f\in\Ck(\Rd)$ and $f_n\in D(A_n)$ with $f_n\to f$ such that the sequence 
   $(A_n f_n)_{n\in\N}$ converges, it holds $f\in D(A)$ and $Af=\lim_{n\to\infty}A_n f_n$.\
  \item[(ii)] For every $r,T\geq 0$, there exists $c\geq 0$ with
   \[ \|S(t)f-S(t)g\|_\kappa\leq c\|f-g\|_\kappa 
    \quad\mbox{for all } t\in [0,T] \mbox{ and } f,g\in B_{\Ck(\Rd)}(r). \]
  \item[(iii)] For every $\epsilon>0$, $r,T\geq 0$ and $K\Subset\Rd$, there exist $c\geq 0$ 
   and $K'\Subset\Rd$ with
   \[ \|S(t)f-S(t)g\|_{\infty,K}\leq c\|f-g\|_{\infty,K'}+\epsilon \]
   for all $t\in [0,T]$ and $f,g\in B_{\Ck(\Rd)}(r)$. 
\end{enumerate}
\end{theorem}
\begin{proof}
 For every $f\in\Ck(\Rd)$ and $f_n\in D(A_n)$ with $f_n\to f$ such that
 $(A_n f_n)_{n\in\N}$ converges, we show that $f\in\D_\infty$.\
 For every $n\in\N$ and $t\geq 0$, it holds
 \begin{equation} \label{eq:int1}
  S_n(t)f_n-f_n\leq\int_0^t \big(S_n(s)(f_n+A_n f_n)-S_n(s)f_n\big)\,\d s
 \end{equation}
 since the proof of~\cite[Lemma~2.3]{BK22} does not change when $\Rd$ is replaced by $X_n$.
	Furthermore, it follows from $f_n\in D(A_n)$ and Assumption~\ref{ass:Sn2}(i) that
    the mapping $\R_+\to\R,\; t\mapsto (S_n(t)f_n)(x)$ is Lipschitz continuous and therefore satisfies
	\[ (S_n(t)f_n-f_n)(x)=\int_0^t \frac{\d}{\d s}(S_n(s)f_n)(x)\,\d s 
	\quad\mbox{for all } t\geq 0 \mbox{ and } x\in X_n. \]
	For every $n\in\N$, $t\geq 0$ and $h\in (0,1]$, Lemma~\ref{lem:lambda} implies
	\begin{align*}
		&\frac{S_n(t)f_n-S_n(t)S_n(h)f_n}{h}-S_n(t)(f_n-A_n f_n)-S_n(t)f_n \\
		& \leq S_n(t)\left(\frac{f_n-S_n(h)f_n}{h}+S_n(h)f_n\right)-S_n(t)S_n(h)f_n+S_n(t)f_n-S_n(t)(f_n-A_n f_n) \\
		&\leq\frac{1}{2}S_n(t)\left(2\left(S_n(h)f_n-f_n+A_n f_n-\frac{S_n(h)f_n-f_n}{h}\right)+f_n-A_n f_n\right) \\
		&\quad\; -\frac{1}{2}S_n(t)(f_n-A_n f_n)-S_n(t)S_n(h)f_n+S_n(t)f_n. 
	\end{align*}
	The strong continuity of $(S_n(t))_{t\geq 0}$, Assumption~\ref{ass:Sn2}(ii) and $f_n\in D(A_n)$ 
	imply that the right-hand side converges to zero as $h\to 0$. Hence, 
	\begin{equation} \label{eq:int2}
		S_n(t)f_n-f_n\geq -\int_0^t \big(S_n(s)(f_n-A_n f_n)-S_n(s)f_n\big)\,\d s
	\end{equation}
	for all $n\in\N$ and $t\geq 0$.\ Since $(A_n f_n)_{n\in\N}$ is bounded, it follows 
	from inequality~\eqref{eq:int1}, inequality~\eqref{eq:int2} and Assumption~\ref{ass:Sn2}(i) 
	that there exists $c\geq 0$ with 
	\[ \|S_n(t)f_n-f_n\|_\kappa\leq ct \quad\mbox{for all } n\in\N \mbox{ and } t\in [0,1]. \]
	Furthermore, the limit $g:=\lim_{n\to\infty}A_n f_n\in\Ck(\Rd)$ exists 
	by assumption.\ Hence, for every $n\in\N$ and $K\Subset\Rd$, we can choose $h_n\in (0,1]$ with 
	\begin{align*}
		\left\|\frac{S_n(h_n)f_n-f_n}{h_n}-g\right\|_{\infty,K_n}
		&\leq\left\|\frac{S_n(h_n)f_n-f_n}{h_n}-A_n f_n\right\|_{\infty,K_n}+\|A_n f_n-g\|_{\infty,K_n} \\
		&\leq\|A_n f_n-g\|_{\infty,K_n}+\tfrac{1}{n}\to 0.
	\end{align*}
	Now, the claim follows from Theorem~\ref{thm:Sn}.
\end{proof}

\begin{corollary} \label{cor:Sn2}
 Let $(S_n)_{n\in\N}$ be a sequence that satisfies Assumption~\ref{ass:Sn2} and $(S(t))_{t\geq 0}$ be a strongly continuous convex monotone semigroup on $\Ck(\Rd)$ 
 with generator $A\colon D(A)\to\Ck(\Rd)$ for a subsequence $(n_l)_{l\in\N}\subset\N$ as in Theorem \ref{thm:Sn2}.\ Moreover,
 for every $f\in\Cbi(\Rd)$, there exist $f_n\in\Ck(X_n)$ and $t_n>0$ with 
 $f_n\to f$ and
  \[ \sup_{n\in\N}\sup_{t\in (0,t_n]}\left\|\frac{S_n(t)f_n-f_n}{t}\right\|_{\kappa,X_n}< \infty. \] 
 Then, for every $f\in\UCK(\Rd)$ such that there exist $f_n\in D(A_n)$, $h_n>0$ and $g\in\UCK(\Rd)$ 
 with $f_n\to f$, $h_n\to 0$, $\|A_n f_n-g\|_{\kappa,X_n}\to 0$ and
 \[ \lim_{n\to \infty}\left\|\frac{S_n(h_{n})f_n-f_n}{h_n}-A_n f_n\right\|_{\kappa,X_n}=0, \]
 it follows that $f\in D(A)$ with
 \[ \lim_{h\downarrow 0}\left\|\frac{S(h)f-f}{h}-Af\right\|_\kappa=0. \]
\end{corollary}
\begin{proof}
 Similar to the proof of Theorem~\ref{thm:Sn2}, one can show that the conditions of 
 Corollary~\ref{cor:Sn} are satisfied.
\end{proof}

Another important case are so-called Chernoff-type approximations which provide a very
flexible and powerful tool to construct nonlinear semigroups, see~\cite{BDKN, BK22, BK22+} 
and the references therein.\
Let $\T_n:=\{kh_n\colon k\in\N_0\}$ for all $n\in\N$, where $(h_n)_{n\in \N}\subset (0,\infty)$
is a fixed sequence with $h_n\to 0$. Then, every semigroup $(S(t))_{t\in\T_n}$ has the form 
\[ S_n(kh_n)f=I_n^k f=(I_n\circ\ldots\circ I_n)f \quad\mbox{with}\quad I_n f:=S_n(h_n)f \]
In many applications, the one-step operators $I_n$ have an explicit representation which
allows to derive properties that transfer to the limit.\ If $X_n=\Rd$ and $I_n:=I(h_n)$ 
for a family $(I(t))_{t\geq 0}$ of operators, Chernoff-type approximations have already been
studied in the previously mentioned references.\
Now, we can show that any stable and consistent approximation scheme converges.\
For every $X\subset\Rd$ and $r\geq 0$, let $\Lipb(X,r)$ be the set of all $r$-Lipschitz functions 
$f\colon X\to\R$ with $\sup_{x\in X}|f(x)|\leq r$. 

\begin{assumption} \label{ass:cher}
 Let $(I_n)_{n\in\N}$ be a sequence of operators $I_n\colon\Ck(X_n)\to\Ck(X_n)$
 satisfying the following conditions:
 \begin{enumerate}
  \item[(i)] $I_n$ is convex and monotone with $I_n0=0$ for all $n\in\N$. 
  \item[(ii)] There exists $\omega\geq 0$ with
   \[ \|I_n f-I_n g\|_{\kappa, X_n}\leq e^{\omega h_n}\|f-g\|_{\kappa,X_n}
    \quad\mbox{for all } n\in\N \mbox{ and } f,g\in\Ck(X_n). \]
  \item[(iii)] For every $\epsilon>0$, $r,T\geq 0$ and $K\Subset\Rd$, there exist $c\geq 0$ and 
   $K'\Subset \Rd$ with
   \[ \|I_n^k f-I_n^k g\|_{\infty,K_n}\leq c\|f-g\|_{\infty, K'_n}+\epsilon \]
   for all $k,n\in\N$ with $kh_n\leq T$ and $f,g\in B_{\Ck(X_n)}(r)$.
  \item[(iv)] It holds $I_n\colon\Lipb(X_n,r)\to\Lipb(X_n,e^{\omega h_n}r)$ for all $r\geq 0$
   and $n\in\N$.
  \item[(v)] There exists a countable set $\D_0\subset\Lipb(\Rd)$ such that, for every $f\in\D_0$,
   there exist $r\geq 0$ and $\bar{f}_n\in\Lipb(X_n,r)$ with $\bar{f}_n\to f$ 
   and $\|\bar{f}_n\|_\kappa\leq\|f\|_\kappa$ for all $n\in\N$ such that the limit
   \[ g:=\lim_{n\to\infty}\frac{I_n\bar{f}_n-\bar{f}_n}{h_n}\in\Ck(\Rd) \quad\mbox{exists.} \]
   Moreover, for every $f\in\Ck(\Rd)$, there exists $(f_n)_{n\in\N}\subset\D_0$ with $f_n\to f$
   and $\|f_n\|_\kappa\leq\|f\|_\kappa$ for all $n\in\N$.
 \end{enumerate}
\end{assumption}

In Subsection~\ref{sec:cont}, we discuss sufficient conditions on the one-step operators~$I_n$ 
which guarantee that the iterated operators $I_n^k$ satisfy condition~(iii).

\begin{theorem} \label{thm:cher}
 Let $(I_n)_{n\in\N}$ be a sequence satisfying Assumption~\ref{ass:cher}.\ Then, there exist a 
 strongly continuous convex monotone semigroup $(S(t))_{t\geq 0}$ on $\Ck(\Rd)$
 with generator $A\colon D(A)\to\Ck(\Rd)$ and a subsequence $(n_l)_{l\in\N}\subset\N$ with
 \[ S(t)f=\lim_{l\to\infty}I_{n_l}^{k_{n_l}^t}f_{n_l} \quad\mbox{for all } (f,t)\in\Ck(\Rd)\times\R_+, \]
 where $k_n^t\in\N$ with $k_n^t h_n\to t$ and $f_n\in\Ck(X_n)$ with $f_n\to f$ are arbitrary.\
 Moreover, the following statements are valid:
 \begin{enumerate}
  \item[(i)] For every $f\in\Ck(\Rd)$ and $f_n\in\Ck(X_n)$ such that $f_n\to f$ and the limit
   \[ g:=\lim_{n\to\infty}\frac{I_n f_n-f_n}{h_n} \quad\mbox{exists}, \]
   it holds $f\in D(A)$ and $Af=g$.
  \item[(ii)] It holds $\|S(t)f-S(t)g\|_\kappa\leq e^{\omega t}\|f-g\|_\kappa$ for all $t\geq 0$ 
   and $ f,g\in\Ck(\Rd)$.
  \item[(iii)] For every $\epsilon>0$, $r,T\geq 0$ and $K\Subset\Rd$, there exist $c\geq 0$ 
   and $K'\Subset\Rd$ with
   \[ \|S(t)f-S(t)g\|_{\infty,K}\leq c\|f-g\|_{\infty,K'}+\epsilon \]
   for all $t\in [0,T]$ and $f,g\in B_{\Ck(\Rd)}(r)$. 
  \item[(iv)] It holds $S(t)\colon\Lipb(\Rd,r)\to\Lipb(\Rd,e^{\omega t}r)$ for all $r,t\geq 0$.\
 \end{enumerate}
\end{theorem} 
\begin{proof}
	We just have to verify Assumption~\ref{ass:Sn} for $S_n(kh_n):=I_n^k$.\
	The sequence $(S_n)_{n\in\N}$ consists of convex monotone semigroups 
	on $\Ck(X_n)$ and Assumption~\ref{ass:cher}(iii) guarantees that
	Assumption~\ref{ass:Sn}(ii) is satisfied. For every $k,n\in\N$ and $f,g\in\Ck(X_n)$,
	Assumption~\ref{ass:cher}(ii) implies
	\begin{equation} \label{eq:cher.norm}
		\|I_n^k f-I_n^k g\|_{\kappa, X_n}\leq e^{\omega kh_n}\|f-g\|_{\kappa, X_n} 
	\end{equation}
	showing that Assumption~\ref{ass:Sn}(i) is valid.\ Finally, it follows from Assumption~\ref{ass:cher}(iv) 
    and~(v) that Assumption~\ref{ass:Sn}(iii) is valid. 
    Now, Theorem~\ref{thm:Sn} yields the claim.
\end{proof}

\begin{corollary} \label{cor:cher}
 Let $(I_n)_{n\in\N}$ be a sequence satisfying Assumption~\ref{ass:cher} and $(S(t))_{t\geq 0}$ be a 
 strongly continuous convex monotone semigroup on $\Ck(\Rd)$
 with generator $A\colon D(A)\to\Ck(\Rd)$ for a subsequence $(n_l)_{l\in\N}\subset\N$ as in Theorem \ref{thm:cher}.\ Moreover,
 for every $f\in\Cbi(\Rd)$, there exist $f_n\in\Ck(X_n)$ with $f_n\to f$ and 
 \[ \sup_{n\in\N}\left\|\frac{I_n f_n-f_n}{h_n}\right\|_{\kappa,X_n}< \infty. \] 
 Then, for every $f\in\UCK(\Rd)$ such that there exist $f_n\in\Ck(X_n)$ and $g\in\UCK(\Rd)$ 
 with $f_n\to f$ and
 \[ \lim_{n\to\infty}\left\|\frac{I_n f_n-f_n}{h_n}-g\right\|_{\kappa,X_n}=0, \]
 it follows that $f\in D(A)$ with
 \[ \lim_{h\downarrow 0}\left\|\frac{S(h)f-f}{h}-Af\right\|_\kappa=0. \]
\end{corollary}
\begin{proof}
 Similar to the proof of Theorem~\ref{thm:cher}, one can show that the conditions of 
 Corollary~\ref{cor:Sn} are satisfied.
\end{proof}

Since the convergence equation~\eqref{eq:Sn} is based on relative compactness, we a priori
only obtain convergence for a subsequence.\ To overcome this limitation, we have to show
that the semigroup $(S(t))_{t\geq 0}$ is uniquely determined by its generator.\
Then, if the set $\D_\infty$ is sufficiently large, the limit in equation~\eqref{eq:Sn}
will not depend on the choice of the subsequence and the semigroup $(S(t))_{t\geq 0}$
will be uniquely determined by $(A_n)_{n\in\N}$. In the following subsection, we provide
sufficient conditions.

\subsection{Uniqueness of the limit semigroup}
\label{sec:unique}
In order to show that $(S(t))_{t\geq 0}$ is uniquely determined by its generator, 
we rely on the recently obtained comparison principle for strongly continuous
convex monotone semigroups in~\cite{BDKN}.\ While the uniqueness of strongly continuous
linear semigroups is classical, establishing the uniqueness of nonlinear semigroups is 
far more delicate.\ The core of the problem is that, in general, the domain of the generator 
is not invariant, i.e., $S(t)\colon D(A)\not\to D(A)$.\
However, it holds $D(A)\subset\LSplus$ 
and $S(t)\colon\LSplus\to\LSplus$ for all $t\geq 0$, where $\LSplus$ denotes the so-called 
upper Lipschitz set.\ The key insight of~\cite{BDKN} is that strongly continuous convex 
monotone semigroups are uniquely determined by their so-called upper $\Gamma$-generator 
$A_\Gamma^+ f$ which is defined for all $f\in\LSplus$ and satisfies $A_\Gamma^+f=Af$ 
for all $f\in D(A)$.\ Under additional conditions, the semigroup is even uniquely 
determined by the restriction of its generator to $\Cbi(\Rd)$ consisting of all bounded 
infinitely differentiable functions $f\colon X\to\R$ such that all derivatives are bounded.\
Define $$(\tau_x f)(y):=f(x+y)$$ for all $n\in\N$, $f\in\Ck(X_n)$ and $x,y\in X_n$.

We point out that, also in a nonlinear setting, the domain can be invariant if the generator 
is m-accretive or maximal monotone, see~\cite{Barbu10,BC91,BP,Brezis71,CL71,Kato67,Kurtz73}, 
and for strongly continuous convex monotone semigroups on $L^p$-type spaces, see~\cite{DKN21,BK22}.\
However, the examples presented in this article do not fall into either of these categories, 
see \cite[Example 5.4]{DKN21+} for an explicit counterexample.\ Furthermore, since $u(t):=S(t)f$ 
defines a viscosity solution, one could also rely on comparison principles for viscosity solutions 
to obtain uniqueness of the limit semigroup.\ This link between nonlinear semigroups and viscosity 
solutions has, for instance, been used in~\cite{Criens25a,Criens25b,DKN20,kuhn2018viscosity,NR21}.

\begin{theorem} \label{thm:unique}
 Let $(S_n)_{n\in\N}$ be a sequence satisfying Assumption~\ref{ass:Sn}.\ Moreover,
 we assume that the following conditions are valid:
 \begin{enumerate}
  \item[(i)] For every $f\in\Lipb(\Rd)$ and $\epsilon>0$, there exist $\delta,t_0>0$ 
   and $f_n\in\Ck(X_n)$ with $f_n\to f$ such that 
   \[ S_n(t)(\tau_x f_n)\leq\tau_x S_n(t)f_n+\tfrac{\epsilon t}{\kappa} \]
   for all $n\in\N$, $t\in [0,t_0]\cap\T_n$ and $x\in B_{X_n}(\delta)$.
  \item[(ii)] For every $f\in\Lipb(\Rd)$, there exist $r,t_0>0$ with
   $S_n(t)f\in\Lipb(X_n,r)$ for all $n\in\N$ and $t\in [0,t_0]\cap\T_n$. 
  \item[(iii)] It holds $\Cbi\subset\D_\infty$. 
 \end{enumerate}
 Then, the semigroup $(S(t))_{t\geq 0}$ from Theorem~\ref{thm:Sn} does not depend on the 
 choice of the convergent subsequence and we obtain
 \[ S(t)f=\lim_{n\to\infty}S_n(t_n)f_n \quad\mbox{for all } (f,t)\in\Ck(\Rd)\times\R_+, \]
 where $t_n\in\T_n$ with $t_n\to t$ and $f_n\in\Ck(X_n)$ with $f_n\to f$ are arbitrary.\
 In addition, the following statements are valid: 
 \begin{enumerate} 
  \item[(a)] For every $f\in\Lipb(\Rd)$ and $\epsilon>0$, there exist $\delta,t_0>0$ with
   \[ S(t)(\tau_x f)\leq\tau_x S(t)f+\tfrac{\epsilon t}{\kappa}
    \quad\mbox{for all } t\in [0,t_0] \mbox{ and } x\in B_{\Rd}(\delta). \]
  \item[(b)] It holds $S(t)\colon\Lipb(\Rd)\to\Lipb(\Rd)$ for all $t\geq 0$.
  \item[(c)] Let $(T(t))_{t\geq 0}$ be a strongly continuous convex monotone semigroup
   on $\Ck(\Rd)$ satisfying the conditions~(a) and~(b). Let $\Cbi(\Rd)\subset D(B)$ and
   \[ Af=Bf \quad\mbox{for all } f\in\Cbi(\Rd), \]
   where $B$ denotes the generator of $(T(t))_{t\geq 0}$.\ Then, we obtain $S(t)f=T(t)f$ for all 
   $t\geq 0$ and $f\in\Ck(\Rd)$.
 \end{enumerate}
\end{theorem}
\begin{proof}
 We show that, for every $f\in\Lipb(\Rd)$ and $\epsilon>0$, there exist $\delta,t_0>0$ with
 \begin{equation} \label{eq:unique1}
  S(t)(\tau_x f)\leq\tau_x S(t)f+\tfrac{\epsilon t}{\kappa}
  \quad\text{for all } t\in [0,t_0] \text{ and } x\in B_{\Rd}(\delta). 
 \end{equation}
 By condition~(i), there exist $\delta,t_0>0$ and $f_n\in\Ck(X_n)$ with 
 $f_n\to f$ such that
 \begin{equation} \label{eq:tau}
  S_n(t)(\tau_x f_n)\leq\tau_x S_n(t)f_n+\tfrac{\epsilon t}{\kappa}
 \end{equation}
 for all $n\in\N$, $t\in [0,t_0]\cap\T_n$, $x\in B_{X_n}(\delta)$ and $y\in X_n$. 
 Let $t\in [0,t_0]$, $x\in B_{\Rd}(\delta)$, $y\in\Rd$, $t_n\in [0,t_0]\cap\T_n$ 
 and $x_n, y_n\in X_n$ with $t_n\to t$, $x_n\to x$ and $y_n\to y$.\ Then, 
 \begin{align*}
  &\big(S(t)(\tau_x f)-\tau_x S(t)f\big)(y) \leq |(S(t)(\tau_x f))(y)-(S(t)(\tau_x f))(y_{n_l})|\\
  &\quad+|S(t)(\tau_x f)-S_{n_l}(t_{n_l})(\tau_{x_{n_l}}f_{n_l})|(y_{n_l}) 
    +\big(S_{n_l}(t_{n_l})(\tau_{x_{n_l}}f_{n_l})-\tau_{x_{n_l}}S_{n_l}(t_{n_l})f_{n_l}\big)(y_{n_l})\\
  &\quad +|\tau_{x_{n_l}}S_{n_l}(t_{n_l})f_{n_l}-\tau_{x_{n_l}}S(t)f|(y_{n_l})
    +|(\tau_{x_{n_l}}S(t)f)(y_{n_l})-(\tau_x S(t)f)(y)|
 \end{align*}
 for all $l\in\N$.\
 The first and the last term on the right-hand side converge to zero since $S(t)(\tau_x f)$ 
 and $S(t)f$ are continuous while equation~\eqref{eq:Sn} yields that the 
 second and fourth term vanish.\ Moreover, due to inequality~\eqref{eq:tau}, the third term 
 can be bounded by $\frac{\epsilon t_{n_l}}{\kappa(y_{n_l})}$ which converges to $\frac{\epsilon t}{\kappa(y)}$. 
 We obtain 
 \[ (S(t)(\tau_x f))(y)\leq(\tau_x S(t)f)(y)+\tfrac{\epsilon t}{\kappa(y)}. \]
 The statements~(b) and~(c) follow from condition~(ii) and~(iii), Theorem~\ref{thm:Sn}(i)
 and~\cite[Theorem~4.7]{BDKN}.

 It remains to show that the semigroup $(S(t))_{t\geq 0}$ does not depend on the choice 
 of the convergent subsequence.\ For every subsequence $(\tilde{n}_k)_{k\in\N}\subset\N$, 
 there exist a further subsequence $(\tilde{n}_{k_l})_{l\in\N}$ and a strongly continuous 
 convex monotone semigroup $(\tilde{S}(t))_{t\geq 0}$ on $\Ck(\Rd)$ satisfying the conditions~(a)--(c).\
 In particular, it holds $Af=\tilde{A}f$ for all $f\in\Cbi(\Rd)$ and we obtain
 \[ S(t)f=\tilde{S}(t)f \quad\mbox{for all } (f,t)\in\Ck(\Rd)\times\R_+. \]
 Since every subsequence has a further subsequence which converges to a limit that is 
 independent of the choice of the subsequence, we obtain
 \[ S(t)f=\lim_{n\to\infty}S_n(t_n)f_n \quad\mbox{for all } (f,t)\in\Ck(\Rd)\times\R_+, \]
 where $t_n\in\T_n$ with $t_n\to t$ and $f_n\in\Ck(X_n)$ with $f_n\to f$ are arbitrary.
\end{proof}

\begin{corollary} \label{cor:unique1}
 Let $(S_n)_{n\in\N}$ be a sequence satisfying Assumption~\ref{ass:Sn2} and denote
 by $(S(t))_{t\geq 0}$ the corresponding limit semigroup from Theorem~\ref{thm:Sn2}.\
 Furthermore, we assume that the following conditions are valid:
 \begin{enumerate}
  \item[(i)] For every $f\in\Lipb(\Rd)$ and $\epsilon>0$, there exist $\delta,t_0>0$ 
   and $f_n\in\Ck(X_n)$ with $f_n\to f$ such that 
   \[ S_n(t)(\tau_x f_n)\leq\tau_x S_n(t)f_n+\tfrac{\epsilon t}{\kappa} \]
   for all $n\in\N$, $t\in [0,t_0]\cap\T_n$ and $x\in B_{X_n}(\delta)$.
  \item[(ii)] For every $f\in\Lipb(\Rd)$, there exist $r,t_0>0$ with
   $S_n(t)f\in\Lipb(X_n,r)$ for all $n\in\N$ and $t\in [0,t_0]\cap\T_n$. 
  \item[(iii)] For every $f\in\Cbi$, there exist $f_n\in D(A_n)$ with $f_n\to f$
   such that $(A_n f_n)_{n\in\N}$ converges to a limit in $\Ck(\Rd)$.
 \end{enumerate}
 Then, all the statements from Theorem~\ref{thm:unique} are valid.
\end{corollary}

\begin{corollary} \label{cor:unique2}
 Let $(I_n)_{n\in\N}$ be a sequence satisfying Assumption~\ref{ass:cher}.\ Furthermore,
 we assume that the following conditions are valid:
 \begin{enumerate}
  \item[(i)] For every $\epsilon>0$, there exist $\delta>0$ and $n_0\in\N$ with
   \[ I_n(\tau_x f)\leq\tau_x I_n f+\frac{r\epsilon h_n}{\kappa} \]
   for all $r\geq 0$, $n\geq n_0$, $f\in\Lipb(X_n,r)$ and $x\in B_{X_n}(\delta)$.
  \item[(ii)] For every $f\in\Cbi(\Rd)$, there exist $f_n\in\Ck(X_n)$ with $f_n\to f$ 
   such that 
   \[ g:=\lim_{n\to\infty}\frac{I_n f_n-f_n}{h_n}\in\Ck(\Rd) \quad\mbox{exists}. \] 
 \end{enumerate}
 Then, all the statements from Theorem~\ref{thm:unique} are valid.
\end{corollary}
\begin{proof}
 We show that, for every $\epsilon>0$, there exists $\delta>0$ and $n_0\in\N$ with
 \[ I_n^{k+1}(\tau_x f)\leq\tau_x I_n^{k+1}f+\frac{e^{\omega kh_n}r\epsilon (k+1)h_n}{\kappa} \]
 for all $k\in\N$, $n\geq n_0$, $r\geq 0$, $f\in\Lipb(X_n,r)$ and $x\in B_{X_n}(\delta)$.\
 For $k=0$, this follows from condition~(i).\ If the estimate is valid for some fixed $k\in\N_0$, 
 we can use the monotonicity of $I_n$, the fact that $I_n^k f\in\Lipb(X_n,e^{\omega kh_n}r)$
 and Lemma~\ref{lem:kappa} to obtain
 \begin{align*}
  I_n^{k+1}(\tau_x f) &=I_n\big(I_n^k(\tau_x f)\big)
    \leq I_n\left(\tau_x I_n^k f+\frac{e^{\omega kh_n}r\epsilon kh_n}{\kappa}\right) \\
  &\leq\tau_x I_n^{k+1}f+\frac{e^{\omega (k+1)h_n}r\epsilon (k+1)h_n}{\kappa}.
 \end{align*}
 Now, the claim follows from Theorem~\ref{thm:unique}.
\end{proof}

\subsection{Equicontinuity w.r.t.\ the mixed topology}
\label{sec:cont}

Most of the previously imposed conditions can easily be verified in applications, 
see Section~\ref{sec:examples}.\ For instance, Assumption~\ref{ass:Sn}(i) is valid if 
$S(t)\frac{r}\kappa=\frac{r}\kappa$ which can easily be checked in the context of optimal 
control if $\kappa\equiv 1$.\ However, verifying Assumption~\ref{ass:Sn}(ii) can be tricky.\
In the sequel, we discuss how this condition can be verified by means of a suitable family 
of cut-off functions.\ In the context of stochastic processes, Assumption~\ref{ass:Sn}(ii)
can also be verified by means of suitable moment estimates, see~\cite{BDKN,BK22}.\ Let $(X_i)_{i\in I}$ be a family of closed sets $X_i\subset\Rd$ and $(\T_i)_{i\in I}$ 
be a family of sets $\T_i\subset\R_+$ with $s\pm t\in\T_i$ for all $i\in I$ and $s,t\in\T_i$ with $s\geq t$.\ Moreover, let $\{S_i(t)\colon i\in I, t\in\T_i\}$
be a family of convex monotone operators $S_i(t)\colon\Ck(X_i)\to\Ck(X_i)$ with 
$S_i(t)0=0$ such that
\[
 S_i(s+t)=S_i(s)S_i(t)\quad\text{for all }i\in I\text{ and }s,t\in \T_i
\]
and, for every $r,T\geq 0$, there exists $c\geq 0$ with
\begin{equation} \label{eq:Lip}
 \|S_i(t)f-S_i(t)g\|_{\kappa,X_i}\leq c\|f-g\|_{\kappa,X_i}
\end{equation}
for all $i\in I$, $t\in [0,T]\cap\T_i$ and $f,g\in B_{\Ck(X_i)}(r)$.\ In the sequel, we use the notation $K_i:=K\cap X_i$ for all 
	$i\in I$ and $K\Subset\Rd$.

\begin{lemma} \label{cor:cutoff1}
 Assume that, for every $\epsilon>0$, $r\geq 0$ and $K\Subset\Rd$, there exists a family
 $(\zeta_x)_{x\in K}$ of continuous functions $\zeta_x\colon\Rd\to\R$ and 
 $\tilde{K}\Subset\Rd$ such that
 \begin{enumerate}
  \item[(i)] $0\leq\zeta_x\leq 1$ and $\zeta_x(x)=1$ for all $x\in K$,
  \item[(ii)] $\sup_{y\in\tilde{K}^c}\zeta_x(y)\leq\epsilon$ for all $x\in K$,
  \item[(iii)] there exist $t_i\in\T_i\backslash\{0\}$ such that, for every $i\in I$, 
   $t\in [0,t_i]\cap\T_i$ and $x\in K_i$,
   \[ \left\|(S_i(t)\big(\tfrac{r}{\kappa}(1-\zeta_x)\big)-\tfrac{r}{\kappa}(1-\zeta_x)\right\|_{\kappa,X_i}
    \leq\epsilon t. \]
   \end{enumerate}
 Then, for every $\epsilon>0$, $r,T\geq 0$ and $K\Subset\Rd$, there exist $c\geq 0$ 
 and $K'\Subset\Rd$ with
 \begin{equation} \label{eq:cutoff1}
  \|S_i(t)f-S_i(t)g\|_{\infty,K_i}\leq c\|f-g\|_{\infty,K_i'}+\epsilon
 \end{equation}
 for all $i\in I$, $t\in [0,T]\cap\T_i$ and $f,g\in B_{\Ck(X_i)}(r)$.
\end{lemma}
\begin{proof}
Let $\epsilon>0$, $r,T\geq 0$, $K\Subset\Rd$ and let $(\zeta_x)_{x\in K}$ be a family of continuous 
 functions $\zeta_x\colon\Rd\to\R$ satisfying the conditions~(i)-(iii) with $\nicefrac{\epsilon}{c}$.\
 Furthermore, let $i\in I$ and $t\in [0,T]\cap\T_i$.\ Choose $t_1, t_2\in [0,t_i]\cap\T_i$ and $k\in\N$ 
 with $t=kt_1+t_2$.\ For every $x\in K$, we use inequality~\eqref{eq:Lip} 
 and condition~(iii) to obtain
 \begin{align*}
  \|S_i(t)f_x-f_x\|_{\kappa,X_i}
  &\leq\|S_i(kt_1)S_i(t_2)f_x-S_i(kt_1)f_x\|_{\kappa,X_i} \\
  &\quad\; +\sum_{l=0}^{k-1}\|S_i(lt_1)S_i(t_1)f_x-S_i(lt_1)f_x\|_{\kappa,X_i} \\
  &\leq  c\|S_i(t_2)f_x-f_x\|_{\kappa,X_i}+ck\|S_i(t_1)f_x-f_x\|_{\kappa,X_i} 
   \leq\epsilon t,
 \end{align*}
 where $f_x:=\frac{r}{\kappa}(1-\zeta_x)$.\ Since $f_x(x)=0$, we can apply Corollary~\ref{cor:cutoff.app} 
 with $\tilde{I}:=I\times [0,T]$, $X_{(i,t)}:=X_i$ and $\Phi_{(i,t)}:=S_i(t)$.
\end{proof}

The next result is particularly useful for differential operators.\ For an infinitely differentiable function $f\colon\Rd\to\R$ and $l\in\N$, let
$\|D^l f\|_\kappa:=\sum_{|\alpha|=l}\|D^\alpha f\|_\kappa\in [0,\infty]$,
where $D^\alpha f:=\partial_{x_1}^{\alpha_1}\cdots\partial_{x_d}^{\alpha_d}f$ and 
$|\alpha|:=\sum_{j=1}^d \alpha_j$ for all $\alpha:=(\alpha_1,\ldots,\alpha_d)\in\N_0^d$.

\begin{corollary} \label{cor:cutoff2}
 Suppose that, for every $r\geq 0$, there exist $\epsilon_0>0$, $h_i\in\T_i\backslash\{0\}$,
 $N\in\N$ and a non-decreasing function $\rho\colon\R_+\to\R_+$ with $\rho(\epsilon)\to 0$ 
 as $\epsilon\to 0$ such that
 \begin{equation} \label{eq:cutoff2a}
  \left\|\frac{S_i(h)f-f}{h}\right\|_{\kappa,X_i}\leq\rho\bigg(\sum_{l=1}^N\|D^l f\|_\kappa\bigg)
 \end{equation}
 for all $i\in I$, $h\in (0,h_i]\cap\T_i$ and $f:=\frac{r}{\kappa}(1-\zeta)$ with 
 $\zeta\in\Cci(\Rd)$ such that $0\leq\zeta\leq 1$ 
 and $\|D^l f\|_\kappa\leq\epsilon_0$ for all $1\leq l\leq N$.\ Furthermore, the function 
 $\nicefrac{1}{\kappa}$ is~$N$ times continuously differentiable and, for every $\epsilon>0$, 
 there exists $K\Subset\Rd$ with $\sup_{x\in K^c}|(D^\alpha\tfrac{1}{\kappa})\kappa|(x)\leq\epsilon$ 
 for all $1\leq |\alpha|\leq N$.\ Then, for every $\epsilon>0$, $r,T\geq 0$ and $K\Subset\Rd$, 
 there exist $c\geq 0$ and $K'\Subset\Rd$ with
 \[ \|S_i(t)f-S_i(t)g\|_{\infty,K_i}\leq c\|f-g\|_{\infty,K_i'}+\epsilon \]
 for all $i\in I$, $t\in [0,T]\cap\T_i$ and $f,g\in B_{\Ck(X_i)}(r)$.
\end{corollary} 
\begin{proof}
 Let $r\geq 0$, $\epsilon>0$, $K\Subset\Rd$ and $\xi\in\Cci(\Rd)$ with
 $0\leq\xi\leq 1$ and $\xi\equiv 1$ on $B_{\Rd}(1)$. 
 Choose $\delta_0>0$ with $\delta_0 r\leq\nicefrac{\epsilon}{2}$ and 
 \begin{equation} \label{eq:cutoff2b}
  \delta_0 r\sum_{\beta<\alpha}\binom{\alpha}{\beta}\|D^\beta\tfrac{1}{\kappa}\|_\kappa
  \|D^{\alpha-\beta}\xi\|_\infty\leq\frac{\epsilon}{2}
  \quad\mbox{for all } 1\leq |\alpha|\leq N. 
 \end{equation}
 Furthermore, there exists $\tilde{K}\Subset\Rd$ with 
 $\sup_{x\in\tilde{K}^c}|(D^\alpha\frac{1}{\kappa})\kappa|(x)\leq\delta_0$ for all $1\leq |\alpha|\leq N$. 
 Let $\delta\in (0,\delta_0]$ with $\delta|y-x|\leq 1$ for all $(x,y)\in K\times\tilde{K}$ and define 
 $\zeta_x(y):=\xi(\delta(y-x))$ for all $(x,y)\in K\times\Rd$.\ The conditions~(i) and~(ii) 
 from Lemma~\ref{cor:cutoff1} are clearly valid. In addition, for every $x\in K$ and $f_x:=\frac{r}{\kappa}(1-\zeta_x)$, we use inequality~\eqref{eq:cutoff2b} 
 and the equation $(1-\zeta_x)(y)=0$ for all $(x,y)\in K\times\tilde{K}$ to obtain
 \begin{align*}
   |D^\alpha f_x|\kappa
  &\leq\sum_{\beta\leq\alpha}\binom{\alpha}{\beta}
   \big|D^\beta\big(\tfrac{r}{\kappa}\big)D^{\alpha-\beta}(1-\zeta_x)\big|\kappa 
   =r\sum_{\beta\leq\alpha}\binom{\alpha}{\beta}\big|(D^\beta\tfrac{1}{\kappa})\kappa\big|
   \delta^{|\alpha-\beta|}|D^{\alpha-\beta}\xi| \\
  &\leq\delta r\sum_{\beta<\alpha}\binom{\alpha}{\beta}\|D^\beta\tfrac{1}{\kappa}\|_\kappa
   \|D^{\alpha-\beta}\xi\|_\infty 
   +r\big|(D^\alpha\tfrac{1}{\kappa})\kappa\big|(1-\zeta_x) \\
 &\leq\sup_{y\in\tilde{K}^c}r\big|(D^\alpha\tfrac{1}{\kappa})\kappa\big|(y)+\tfrac{\epsilon}{2}
  \leq\epsilon.
 \end{align*}
 Hence, for $c:=\sum_{l=1}^N\#\{\alpha\in\N_0^d\colon |\alpha|=l\}$ and $\epsilon\leq\frac{\epsilon_0}{c}$, 
 inequality~\eqref{eq:cutoff2a} yields
 \[ \left\|\frac{S_i(h)f_x-f_x}{h}\right\|_{\kappa,X_i}\leq\rho(c\epsilon) 
		\quad\mbox{for all } i\in I \mbox{ and } h\in (0,h_i]\cap\T_i.\]
 Since $\rho(c\epsilon)\to 0$ as $\epsilon\to 0$, the claim follows from Lemma~\ref{cor:cutoff1}.
\end{proof}

For Chernoff-type approximations, the verification of condition~(iii) only requires 
knowledge of the one-step operators.\ However, in continuous time, it is important to 
replace condition~(iii) by a condition on the infinitesimal generators.\ In what follows, let \[ A_i\colon D(A_i)\to\Ck(X),\; f\mapsto\lim_{h\downarrow 0}\frac{S_i(h)f-f}{h}, \]
 where the domain $D(A_i)$ consists of all $f\in\Ck(X)$ such that the previous limit exists.

\begin{corollary} \label{cor:cutoff3}
Let $\T_i:=\R_+$ for all $i\in I$ and
 assume that, for every $\epsilon>0$, $r\geq 0$ and $K\Subset\Rd$, there exist a family
 $(\zeta_x)_{x\in K}$ of continuous functions $\zeta_x\colon\Rd\to\R$ and $\tilde{K}\Subset\Rd$ 
 such that
 \begin{enumerate}
  \item[(i)] $0\leq\zeta_x\leq 1$ and $\zeta_x(x)=1$ for all $x\in K$,
  \item[(ii)] $\sup_{y\in\tilde{K}^c}\zeta_x(y)\leq\epsilon$ for all $x\in K$,
  \item[(iii)] the function $f_x:=\frac{r}{\kappa}(1-\zeta_x)$ satisfies $f_x\in D(A_i)$,
   $\|A_i f_x\|_\kappa\leq\epsilon$ and 
   \begin{equation} \label{eq:cutoff.gen}
    \lim_{h\downarrow 0}\left\|\frac{S_i(h)f_x-f_x}{h}-A_i f_x\right\|_{\kappa,X_i}=0 
    \quad\mbox{for all } i\in I \mbox{ and } x\in K. 
   \end{equation}
 \end{enumerate}
 Then, for every $\epsilon>0$, $r,T\geq 0$ and $K\Subset\Rd$, there exist $c\geq 0$ 
 and $K'\Subset\Rd$ with
 \[ \|S_i(t)f-S_i(t)g\|_{\infty,K_i}\leq c\|f-g\|_{\infty,K'_i}+\epsilon \]
 for all $i\in I$, $t\in [0,T]$ and $f,g\in B_{\Ck(X_i)}(r)$.
\end{corollary}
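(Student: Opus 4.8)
The plan is to deduce the statement from Corollary~\ref{cor:cutoff1}. Conditions~(i) and~(ii) imposed here on the family $(\zeta_x)_{x\in K}$ coincide with conditions~(i) and~(ii) of Corollary~\ref{cor:cutoff1}, so the only thing left to verify is condition~(iii) there: for every $\epsilon>0$, $r\geq0$ and $K\Subset\R^d$ one needs a family $(\zeta_x)_{x\in K}$ satisfying~(i)--(ii) and a sequence $(t_n)_{n\in\N}$ with $t_n\in\T_n\setminus\{0\}$ such that
\[ \big\|S_n(t)\big(\tfrac{r}{\kappa}(1-\zeta_x)\big)-\tfrac{r}{\kappa}(1-\zeta_x)\big\|_{\kappa,X_n}\le\epsilon t \]
for all $n\in\N$, $t\in[0,t_n]\cap\T_n$ and $x\in K$. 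Since $\T_n=\R_+$ for all $n$, it suffices to establish such an estimate on the fixed interval $[0,1]$, uniformly in $n$ and $x$, and then take $t_n:=1$.

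First fix $\epsilon>0$, $r\geq0$ and $K\Subset\R^d$. By inequality~\eqref{eq:Lip}, applied with radius $r+1$ and final time $1$, there is $c_0\geq0$ — depending only on $r$ — with $\|S_n(s)g_1-S_n(s)g_2\|_\kappa\le c_0\|g_1-g_2\|_\kappa$ for all $n\in\N$, $s\in[0,1]$ and $g_1,g_2\in B_{\Ck(\R^d)}(r+1)$. Apply the hypothesis with $\epsilon_0:=\min\{1,\nicefrac{\epsilon}{c_0+1}\}$ in place of $\epsilon$ to obtain a family $(\zeta_x)_{x\in K}$ and $\tilde K\Subset\R^d$. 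For $x\in K$ write $f:=\tfrac{r}{\kappa}(1-\zeta_x)$; then $0\le 1-\zeta_x\le1$ gives $\|f\|_\kappa\le r$, while $f\in D(A_n)$ and $\|A_nf\|_\kappa\le\epsilon_0\le1$, so $f$ and $f\pm A_nf$ all lie in $B_{\Ck(\R^d)}(r+1)$.

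For the upper bound I would invoke \cite[Lemma~2.3]{BK22}, exactly as for inequality~\eqref{eq:int1} in the proof of Theorem~\ref{thm:Sn2}, to obtain
\[ S_n(t)f-f\le\int_0^t\big(S_n(s)(f+A_nf)-S_n(s)f\big)\,\d s, \]
and then the Lipschitz estimate above gives $(S_n(t)f-f)(y)\le c_0\|A_nf\|_\kappa\,\tfrac{t}{\kappa(y)}\le\epsilon\,\tfrac{t}{\kappa(y)}$ for all $y\in\R^d$ and $t\in[0,1]$. For the lower bound I would repeat the derivation of inequality~\eqref{eq:int2}, using Lemma~\ref{lem:lambda}; the only point at which that argument invoked Assumption~\ref{ass:Sn2}(ii) is now handled instead by the norm convergence $\big\|\tfrac{S_n(h)f-f}{h}-A_nf\big\|_\kappa\to0$ from condition~(iii), which makes the relevant remainder vanish as $h\downarrow0$ for each fixed $n$. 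This yields
\[ S_n(t)f-f\ge-\int_0^t\big(S_n(s)(f-A_nf)-S_n(s)f\big)\,\d s, \]
hence $(S_n(t)f-f)(y)\ge-\epsilon\,\tfrac{t}{\kappa(y)}$. Combining the two bounds, $\|S_n(t)f-f\|_\kappa\le\epsilon t$ for all $n\in\N$, $t\in[0,1]$ and $x\in K$, which is condition~(iii) of Corollary~\ref{cor:cutoff1} with $t_n:=1$; the assertion then follows from Corollary~\ref{cor:cutoff1}.

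The main obstacle is the lower bound: in Theorem~\ref{thm:Sn2} it relied on the very continuity-from-above property we are trying to establish here, so one cannot simply quote it. The purpose of the stronger $\|\cdot\|_\kappa$-convergence demanded in condition~(iii) — rather than mere convergence in the mixed topology, which is all that $f\in D(A_n)$ guarantees — is precisely to push the $h\downarrow0$ limit in the Lemma~\ref{lem:lambda} estimate through for each individual $n$. Once the two integral inequalities are available, uniformity in $n$ and $x$ of the resulting bound $\epsilon t$ is automatic, since $c_0$ depends only on $r$ and $\epsilon_0$ was chosen independently of $n$ and $x$; the rest is routine bookkeeping of constants.
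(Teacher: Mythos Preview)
The proposal is correct and follows essentially the same route as the paper: both derive the integral inequalities of the type~\eqref{eq:int1}--\eqref{eq:int2} for $f=\frac{r}{\kappa}(1-\zeta_x)$, using the $\|\cdot\|_\kappa$-convergence from condition~(iii) together with the norm-Lipschitz estimate~\eqref{eq:Lip} (instead of the mixed-topology continuity that was used for~\eqref{eq:int2} in the proof of Theorem~\ref{thm:Sn2}), and then obtain $\|S_n(t)f-f\|_\kappa\le\epsilon t$. The only cosmetic difference is that you establish this bound on $[0,1]$ and route through Corollary~\ref{cor:cutoff1} (letting the semigroup property handle the extension to $[0,T]$), whereas the paper bounds directly on $[0,T]$ and concludes via $f(x)=0$ and Lemma~\ref{lem:cutoff}.
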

\begin{proof}
 Let $\epsilon>0$, $r,T\geq 0$ and $K\Subset\Rd$.\ Choose $c\geq 0$ such that
 inequality~\eqref{eq:Lip} is valid with $r+\epsilon$ and a family $(\zeta_x)_{x\in K}$ of
 continuous functions $\zeta_x\colon\Rd\to\R$ satisfying the conditions~(i)-(iii) with
 $\nicefrac{\epsilon}{c}$.\ Let $x\in K$ and $f_x:=\frac{r}{\kappa}(1-\zeta_x)$.\ Similar
 to the proof of Theorem~\ref{thm:Sn2}, one can use inequality~\eqref{eq:Lip} and 
 equation~\eqref{eq:cutoff.gen} to show that
 \begin{align*}
  (S_i(t)f_x-f_x)\kappa &\leq\int_0^t \big(S_i(s)(f_x+A_i f_x)-S_i(s)f_x\big)\kappa\,\d s, \\
  (S_i(t)f_x-f_x)\kappa &\geq -\int_0^t \big(S_i(s)(f_x-A_i f_x)-S_i(s)f_x\big)\kappa\,\d s
 \end{align*}
 for all $i\in I$ and $t\geq 0$. Hence, inequality~\eqref{eq:Lip} and condition~(iii) imply
 \[ \|S_i(t)f_x-f_x\|_\kappa\leq c\|A_i f_x\|_\kappa t\leq\epsilon t \]
 for all $i\in I$ and $t\in [0,T]$.\ The claim follows from Lemma~\ref{cor:cutoff1}.
 \end{proof}

Note that the norm convergence in equation~\eqref{eq:cutoff.gen} is essential and 
cannot be replaced by convergence w.r.t. the mixed topology.
	
\begin{corollary} \label{cor:cutoff4}
 Let $\T_i:=\R_+$ for all $i\in I$.\ Assume that, for every $r\geq 0$, there exist $\epsilon_0>0$,
 $N\in\N$ and a non-decreasing function $\rho\colon\R_+\to\R_+$ with $\rho(\epsilon)\to 0$ 
 as $\epsilon\to 0$ such that $f:=\frac{r}{\kappa}(1-\zeta)\in D(A_i)$ with
 \[ \lim_{h\downarrow 0}\left\|\frac{S_i(h)f-f}{h}-A_i f\right\|_{\kappa,X_i}=0
    \quad\text{and}\quad 
    \|A_i f\|_{\kappa,X_i}\leq\rho\bigg(\sum_{l=1}^N\|D^l f\|_\kappa\bigg) \]
 for all $i\in I$ and $\zeta\in\Cci(\Rd)$ satisfying $0\leq\zeta\leq 1$ 
 and $\|D^l f\|_\kappa\leq\epsilon_0$ for all $1\leq l\leq N$. Furthermore, the function 
 $\nicefrac{1}{\kappa}$ is $N$-times continuously differentiable and, for every $\epsilon>0$, 
 there exists $K\Subset\Rd$ with $\sup_{x\in K^c}|(D^\alpha\tfrac{1}{\kappa})\kappa|(x)\leq\epsilon$
 for all $1\leq |\alpha|\leq N$.\ Then, for every $\epsilon>0$, $r,T\geq 0$ and $K\Subset\Rd$, 
 there exist $c\geq 0$ and $K'\Subset\Rd$ with
 \[ \|S_i(t)f-S_i(t)g\|_{\infty,K_i}\leq c\|f-g\|_{\infty,K_i'}+\epsilon \]
 for all $i\in I$, $t\in [0,T]$ and $f,g\in B_{\Ck(X_i)}(r)$.
\end{corollary} 
\begin{proof}
 Using Corollary~\ref{cor:cutoff3}, one can proceed similar as in Corollary \ref{cor:cutoff2}.
\end{proof}

We conclude with the observation that verifying the continuity of $S_i(t)f$ for all $f\in\Ck(X_i)$ 
can often be reduced to verifying this condition for all $f\in\Lipb(X_i)$.

\begin{remark} \label{rem:cutoff}
 Typically, it is easy to show that $S_i(t)\colon\Ck(X_i)\to\Fk(X_i)$ but verifying the
 continuity of $S_i(t)f$ for all $f\in\Ck(X_i)$ might be tricky.\ However, for 
 Lemma~\ref{cor:cutoff1} and the Corollaries~\ref{cor:cutoff2}-\ref{cor:cutoff4} 
 to be valid, it is sufficient that
 \[ S_i(t)(\tfrac{r}{\kappa}(1-\zeta_x))\in\Ck(X_i) \quad\mbox{and}\quad 
    S_i(s+t)(\tfrac{r}{\kappa}(1-\zeta_x))=S_i(s)S_i(t)(\tfrac{r}{\kappa}(1-\zeta_x)) \]
 If $\kappa\equiv 1$, one can usually choose $\zeta_x\in\Lipb(X_i)$ 
 and show $S_i(t)\colon\Lipb(X_i)\to\Lipb(X_i)$. 
 Furthermore, if $S_i(t)\colon\Lipb(X_i)\to\Lipb(X_i)$ and inequality~\eqref{eq:cutoff1} has been
 verified for all $f,g\in\Lipb(X_i)$, it follows that $S_i(t)\colon\Ck(X_i)\to\Ck(X_i)$.\
 Indeed, for every $f\in\Ck(X_i)$, we can choose a sequence $(f_k)_{k\in\N}\subset\Lipb(X_i)$
 with $f_k\to f$ to obtain $S_i(t)f=\lim_{k\to\infty}S_i(t)f_k\in\Ck(X_i)$ for all $i\in I$ 
 and $t\in\T_i$.
\end{remark}

	\section{Examples}
	\label{sec:examples}

	\subsection{Euler formula and Yosida approximation for upper envelopes}
	\label{sec:yosida}

	In this subsection, we derive an Euler formula and a Yosida-type 
	approximation for upper envelopes of families of linear semigroups.\ The construction uses the
	supremum of linear resolvents rather than the resolvent of the supremum 
	of linear operators as it is common both in the classical theory of m-accretive 
	operators and the theory of viscosity solutions.\ We refer to~\cite{bff} and~\cite{kuhnemund} 
	for an Euler formula for bi-continuous linear semigroups as well as to~\cite{MR1293091} 
	for a Hille--Yosida theorem for weakly continuous linear semigroups on the space of 
	bounded uniformly continuous functions and to~\cite{pazy} for the classical Yosida 
	approximation. To that end, we consider a non-empty family $(A^\theta)_{\theta\in\Theta}$ 
	of linear operators $A^\theta\colon D(A^\theta)\to\Ck(\Rd)$ with domain 
	$D(A^\theta)\subset \Ck(\Rd)$ and resolvent $\varrho(A^\theta)\subset\mathbb{C}$. 
	In the sequel, we provide certain nonlinear versions of the classical Euler formula and 
	the Yosida approximation in order to construct a strongly continuous convex 
	monotone semigroup $(S(t))_{t\geq 0}$ on $\Ck(\Rd)$ with generator
	\[ Af=\sup_{\theta\in\Theta}A^\theta f \quad\mbox{for all } f\in\Cbi(\Rd). \]

	\begin{assumption} \label{ass:theta}
		Suppose that the conditions~(i)--(iv) or the conditions~(i),~(ii'), (iii) and (iv) 
		from the following list are satisfied: 
		\begin{itemize}
			\item[(i)] There exists $\omega\in \R$ with 
			$(\omega,\infty)\subset \bigcap_{\theta\in \Theta}\varrho(A^\theta)$ such that
			\begin{align*}
				\|(\lambda-\omega)(\lambda-A^\theta)^{-1}f\|_{\infty} &\leq \|f\|_\infty
				\quad\mbox{for all } \theta \in \Theta\text{ and }f\in\Lipb(\Rd), \\
				\|(\lambda-\omega)(\lambda-A^\theta)^{-1}f\|_{\kappa} &\leq \|f\|_\kappa
				\quad\mbox{for all } \theta \in \Theta \text{ and }f\in \Ck(\Rd),
			\end{align*}
            and $(\lambda-A^\theta)^{-1}\colon\Ck(\Rd)\to\Ck(\Rd)$
			is monotone for all $\lambda\in (\omega,\infty)$ and $\theta\in \Theta$.
			\item[(ii)] There exists a bounded continuous function $\tilde{\kappa}\colon \Rd\to (0,\infty)$ 
			with 
			\[ \lim_{|x|\to \infty}\frac{\tilde{\kappa}(x)}{\kappa(x)}=0 \quad\mbox{and}\quad
			\|(\lambda-\omega)(\lambda-A^\theta)^{-1}f\|_{\tilde{\kappa}}\leq \|f\|_{\tilde{\kappa}} \]
			for all $\lambda\in (\omega,\infty)$ and $f\in\Ck(\Rd)$. 
			\item[(ii')] For every $\epsilon>0$, $r\geq 0$ and $K\Subset\Rd$, there exist a family
			$(\zeta_x)_{x\in K}$ of continuous functions $\zeta_x\colon\Rd\to\R$  
			and $\tilde{K}\Subset\Rd$ such that
			\begin{enumerate}[label=(\alph*)]
				\item $0\leq\zeta_x\leq 1$ and $\zeta_x(x)=1$ for all $x\in K$,
				\item $\sup_{y\in\tilde{K}^c}\zeta_x(y)\leq\epsilon$ for all $x\in K$,
				\item $\frac{r}{\kappa}(1-\zeta_x)\in\bigcap_{\theta\in \Theta}D(A^\theta)$ and 
				$\sup_{\theta\in \Theta}\|A^\theta(\frac{r}{\kappa}(1-\zeta_x))\|_\kappa\leq\epsilon$ 
				for all $x\in K$.
			\end{enumerate}
			\item[(iii)] There exists $L\geq 0$ with 
			\[ (\lambda-\omega)^2\|\tau_x(\lambda-A^\theta)^{-1} f-(\lambda-A^\theta)^{-1} (\tau_x f)\|_{\infty}
			\leq Lr|x| \]
			for all $r\geq0$, $f\in \Lipb(\Rd,r)$, $x\in \Rd$, $\lambda\in (\omega,\infty)$ and $\theta\in \Theta$. 
			\item[(iv)] It holds $\Cbi(\Rd)\subset \bigcap_{\theta\in \Theta}D(A^\theta)$ and,
			for every $f\in\Cbi(\Rd)$ and $K\Subset\Rd$, 
			\[ \sup_{\theta\in \Theta}\|A^\theta f\|_\kappa<\infty
			\quad\mbox{and}\quad
			\lim_{\lambda\to\infty}\sup_{\theta\in \Theta}\|(\lambda-\omega)(\lambda-A^\theta)^{-1}A^\theta f-A^\theta f\|_{\infty,K}=0. \]
		\end{itemize}	
	\end{assumption}

	The previous conditions are, for instance, satisfied for suitably bounded families of 
	generators of L\'evy processes, Ornstein--Uhlenbeck processes and geometric 
	Brownian motions.\ In these cases, it is straightforward to verify the respective conditions 
	for the transition semigroups and use the Laplace transform to transfer them
	to the resolvents.\ For a detailed illustration how the conditions can be verified for 
	a large class of transition semigroups, we refer to~\cite[Section~6.3 and~Section 7]{NR21}.\
	At this point we want to emphasize that the nonlinear semigroups in~\cite{NR21} 
	are not constructed via the resolvents but as so-called Nisio semigroups.\ Furthermore, 
	due to Corollary~\ref{cor:cutoff.app} and Lemma~\ref{cor:cutoff1}, the conditions~(ii) and~(ii') 
	both guarantee that Assumption~\ref{ass:cher}(iii) is valid.\ While condition~(ii) is satisfied 
	for Ornstein--Uhlenbeck processes and geometric Brownian motions, it does not cover
	L\'evy processes with non-integrable jumps.\ On the other hand, condition~(ii') applies to 
	L\'evy processes with possibly non-integrable jumps but neither to Ornstein--Uhlenbeck 
	processes nor geometric Brownian motions. In the sequel, let $(\lambda_n)_{n\in\N}\subset (\omega,\infty)$ 
	be a sequence with $\lambda_n\to\infty$ and
	\[ h_n:=\frac{1}{\lambda_n-\omega} \quad\mbox{for all } n\in \N. \]
	For every $n\in\N$ and $f\in\Ck(\Rd)$, we define
	\[ I_n f:=\sup_{\theta\in\Theta}\lambda_n(\lambda_n-A^\theta)^{-1}f. \]
	Moreover, let $X_n:=\Rd$ and $\T_n:=\{kh_n\colon k\in\N_0\}$ for all $n\in\N$.  
	For every $n\in\N$ and $f\in\Ck(\Rd)$, the resolvent identity implies
	\[ I_n f=f+\sup_{\theta\in\Theta}A^\theta(\lambda_n-A^\theta)^{-1} f, \]
	and, for every $f\in\bigcap_{\theta\in\Theta}D(A^\theta)$, it follows that
	\begin{equation} \label{eq:resolvent1}
		I_n f=f+\sup_{\theta\in\Theta}(\lambda_n-A^\theta)^{-1} A^\theta f.
	\end{equation}

	\begin{theorem} \label{thm:euler}
		Suppose that Assumption~\ref{ass:theta} is satisfied.\ Then, there exists a strongly 
		continuous convex monotone semigroup $(S(t))_{t\geq 0}$ on $\Ck(\Rd)$ 
        with generator $A\colon D(A)\to\Ck(\Rd)$ 
        such that
		\[ S(t)f=\lim_{n\to\infty}I_n^{k_n^t}f \quad\mbox{for all } (f,t)\in\Ck(\Rd)\times\R_+,\]
		where $(k_n^t)_{n\in\N}\subset\N$ is an arbitrary sequence satisfying $k_n^t h_n\to t$.
		It holds
		\[ \Cbi(\Rd)\subset D(A) \quad\mbox{and}\quad
		Af=\sup_{\theta\in\Theta}A^\theta f \quad\mbox{for all } f\in\Cbi(\Rd).\]
	\end{theorem}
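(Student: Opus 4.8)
The plan is to view $(I_n)_{n\in\N}$ as a sequence of one-step operators and apply Theorem~\ref{thm:cher} (with $X_n=\R^d$ and $\T_n=\{kh_n:k\in\N_0\}$); the real work is verifying Assumption~\ref{ass:cher}, after which the identification $Af=\sup_{\theta\in\Theta}A^\theta f$ on $\Cbi(\R^d)$ and the conditions of Theorem~\ref{thm:comp} are essentially read off. First I would check that each $I_n$ is a well-defined convex monotone operator on $\Ck(\R^d)$ with $I_n0=0$: the operators $\lambda_n(\lambda_n-A^\theta)^{-1}$ are linear, monotone and, by the $\kappa$-contractivity in Assumption~\ref{ass:theta}(i), satisfy $\|\lambda_n(\lambda_n-A^\theta)^{-1}f\|_\kappa\le(1+\omega h_n)\|f\|_\kappa$ \emph{uniformly in} $\theta$, so the pointwise supremum $I_nf$ is finite with $\|I_nf\|_\kappa<\infty$, convex and monotone, and $|\sup_\theta a_\theta-\sup_\theta b_\theta|\le\sup_\theta|a_\theta-b_\theta|$ together with linearity yields Assumption~\ref{ass:cher}(i) with constant $1+\omega h_n\le e^{\omega h_n}$. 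Continuity of $I_nf$ I would establish first for $f\in\Lipb(\R^d)$ — there the sup-norm bound from Assumption~\ref{ass:theta}(i) and the commutator bound from Assumption~\ref{ass:theta}(iii) make $\{\lambda_n(\lambda_n-A^\theta)^{-1}f\}_{\theta\in\Theta}$ uniformly bounded and equi-Lipschitz, so its supremum is Lipschitz — and then extend to general $f\in\Ck(\R^d)$ by density as in Remark~\ref{rem:cutoff}. The same bounds give $I_n\colon\Lipb(\R^d,r)\to\Lipb(\R^d,e^{\tilde\omega h_n}r)$ with $\tilde\omega:=|\omega|+L$, which is Assumption~\ref{ass:cher}(iv); this forces replacing $\omega$ by the larger constant $\tilde\omega$, which is harmless and still compatible with~(i).

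For Assumption~\ref{ass:cher}(iii) I would use that translations commute with the supremum, so $|I_n(\tau_xf)-\tau_xI_nf|\le\sup_\theta|\lambda_n(\lambda_n-A^\theta)^{-1}(\tau_xf)-\lambda_n\tau_x(\lambda_n-A^\theta)^{-1}f|$, and Assumption~\ref{ass:theta}(iii) bounds the right-hand side in supremum norm by $(1+\omega h_n)h_nLr|x|$ for $f\in\Lipb(\R^d,r)$; since $\kappa$ is bounded this converts into the required $\|\cdot\|_\kappa$-estimate (after absorbing $\|\kappa\|_\infty$ into the constant). Assumption~\ref{ass:cher}(ii) is exactly where the two alternatives of Assumption~\ref{ass:theta} enter: under~(ii), the $\tilde\kappa$-contractivity passes from the resolvents to the iterates $I_n^k$ (with a constant of order $e^{\omega T}$ on $kh_n\le T$) and Lemma~\ref{lem:moment} applies; under~(ii'), the cut-offs $\tfrac r\kappa(1-\zeta_x)$ lie in $\bigcap_\theta D(A^\theta)$, so the resolvent identity~\eqref{eq:resolvent1} gives $\|I_n(\tfrac r\kappa(1-\zeta_x))-\tfrac r\kappa(1-\zeta_x)\|_\kappa\le h_n\sup_\theta\|A^\theta(\tfrac r\kappa(1-\zeta_x))\|_\kappa\le h_n\epsilon$, and Corollary~\ref{cor:cutoff1} applies with $t_n:=h_n$.

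It then remains to verify Assumption~\ref{ass:cher}(v): for $f\in\Cbi(\R^d)\subset\bigcap_\theta D(A^\theta)$, identity~\eqref{eq:resolvent1} gives $\frac{I_nf-f}{h_n}=\sup_\theta(\lambda_n-\omega)(\lambda_n-A^\theta)^{-1}A^\theta f$; by Assumption~\ref{ass:theta}(i) and~(iv) this is bounded in $\Ck(\R^d)$ uniformly in $n$, and by Assumption~\ref{ass:theta}(iv) the functions $(\lambda_n-\omega)(\lambda_n-A^\theta)^{-1}A^\theta f$ converge to $A^\theta f$ uniformly on compact sets, which — passed through the supremum over $\theta$ — yields $\frac{I_nf-f}{h_n}\to\sup_\theta A^\theta f$ in $\Ck(\R^d)$. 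Theorem~\ref{thm:cher} then delivers the strongly continuous convex monotone semigroup $(S(t))_{t\ge0}$ with $S(t)0=0$, the representation $S(t)f=\lim_{n\to\infty}I_n^{k_n^t}f$, and $\Cbi(\R^d)\subset D(A)$ with $Af=\lim_{n\to\infty}\frac{I_nf-f}{h_n}=\sup_\theta A^\theta f$. Finally the conditions~(i)--(v) of Theorem~\ref{thm:comp} are read off from Theorem~\ref{thm:cher}: $\Cbi(\R^d)\subset D(A)$ is~(i), the $\kappa$-Lipschitz estimate of Theorem~\ref{thm:cher}(ii) is~(ii), the equicontinuity of Theorem~\ref{thm:cher}(iii) is~(iii), the bound $\|S(t)(\tau_xf)-\tau_xS(t)f\|_\kappa\le Lrte^{\tilde\omega t}|x|$ gives~(iv) once $\delta,t_0$ are chosen small, and $S(t)\colon\Lipb(\R^d,r)\to\Lipb(\R^d,e^{\tilde\omega t}r)$ gives~(v).

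I expect two delicate points. The first is the continuity of $I_nf$ for arbitrary $f\in\Ck(\R^d)$: since a supremum of continuous functions is only lower semicontinuous in general, one is forced through the detour via $\Lipb(\R^d)$ and Remark~\ref{rem:cutoff}, using the equi-Lipschitz bound that Assumption~\ref{ass:theta}(i) and~(iii) provide uniformly in $\theta$. The second, and more substantial, is the generator identification in Assumption~\ref{ass:cher}(v): one must push the resolvent approximation $(\lambda_n-\omega)(\lambda_n-A^\theta)^{-1}A^\theta f\to A^\theta f$ through the supremum over $\theta$ while keeping everything controlled uniformly, which is precisely the reason the uniform bounds in Assumption~\ref{ass:theta}(i) and~(iv) are imposed. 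The verification of Assumption~\ref{ass:cher}(ii), split according to whether~(ii) or~(ii') holds, is the other place where the structure of the family $(A^\theta)_{\theta\in\Theta}$ genuinely matters.
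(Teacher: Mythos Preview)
Your proposal is correct and follows essentially the same route as the paper: verify Assumption~\ref{ass:cher} for the one-step operators $I_n$ and invoke Theorem~\ref{thm:cher}, handling Assumption~\ref{ass:cher}(ii) via Lemma~\ref{lem:moment} under~(ii) or via Corollary~\ref{cor:cutoff1} and equation~\eqref{eq:resolvent1} under~(ii'), and using~\eqref{eq:resolvent1} again together with Assumption~\ref{ass:theta}(iv) for the generator identification. The only cosmetic difference is your Lipschitz constant $\tilde\omega=|\omega|+L$ versus the paper's $\beta=\omega+\tfrac{L\lambda_0}{\lambda_0-\omega}$, which is immaterial; your two flagged delicate points (continuity of $I_nf$ for general $f\in\Ck(\R^d)$ via the $\Lipb$ detour, and the need for the convergence in Assumption~\ref{ass:theta}(iv) to be uniform in $\theta$ so as to pass through the supremum) are exactly the places where the paper also leans on Remark~\ref{rem:cutoff} and on the implicit uniformity in~(iv).
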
 
	\begin{proof}
		We verify Assumption~\ref{ass:cher} and the conditions from Corollary~\ref{cor:unique2}.\ 
		First, we show that $I_n\colon\Lipb(\Rd,r)\to\Lipb(\Rd, e^{\beta h_n}r)$ for all $n\in\N$
		and $r\geq 0$, where 
		\[ \beta:=\omega+\frac{L\lambda_0}{\lambda_0-\omega} \quad\mbox{and}\quad
		\lambda_0:=\min_{n\in\N}\lambda_n>\omega. \]
		For every $n\in\N$ and $f,g\in\Lipb(\Rd)$, it follows from Assumption~\ref{ass:theta}(i) 
		that
		\[ \|I_n f-I_n g\|_\infty\leq\frac{\lambda_n}{\lambda_n-\omega}\|f-g\|_\infty
		=(1+\omega h_n)\|f-g\|_\infty\leq e^{\omega h_n}\|f-g\|_\infty. \]
		Moreover, it holds $I_n 0=0$ and Assumption~\ref{ass:theta}(iii) implies 
		\begin{equation} \label{eq:theta.tau}
			\|I_n(\tau_x f)-\tau_x I_n f\|_\infty
			\leq\frac{L\lambda_n}{\lambda_n-\omega}rh_n|x|
			\leq\frac{L\lambda_0}{\lambda_0-\omega}rh_n|x|
		\end{equation}
		for all $n\in\N$, $r\geq 0$, $f\in\Lipb(\Rd,r)$ and $x\in\Rd$. Hence, for every $x,y\in\Rd$,
		\begin{align*}
			&|(I_n f)(x+y)-(I_n f)(y)|
			\leq |\tau_x I_n f-I_n(\tau_x f)|(y)+|I_n(\tau_x f)-I_n f|(y) \\
			&\leq\frac{\lambda_0}{\lambda_0-\omega}Lrh_n|x| +(1+\omega h_n)r|x| =\left(1+\left(\omega+\frac{L\lambda_0}{\lambda_0-\omega}\right)h_n\right)r|x|
			\leq e^{\beta h_n}r|x|. 
		\end{align*}
		We obtain that $I_n f\in\Lipb(\Rd, e^{\beta h_n}r)$ and therefore Assumption~\ref{ass:cher}(iv)
		is satisfied. Furthermore, inequality~\eqref{eq:theta.tau} 
		yields condition~(i) from Corollary~\ref{cor:unique2}. 
		
		Second, we show that $I_n\colon\Ck(\Rd)\to\Ck(\Rd)$. 
		Assumption~\ref{ass:theta}(i) yields
		\[ \|I_n f-I_n g\|_\kappa\leq\frac{\lambda_n}{\lambda_n-\omega}\|f-g\|_\kappa
		=(1+\omega h_n)\|f-g\|_\kappa\leq e	^{\omega h_n}\|f-g\|_\kappa \]
		and thus Assumption~\ref{ass:cher}(ii) is valid.\ Moreover, the operators
		$I_n\colon\Ck(\Rd)\to\Fk(\Rd)$ are convex and monotone with $I_n 0=0$.\
		Let $f\in\Ck(\Rd)$ and $(f_k)_{k\in\N}\subset\Lipb(\Rd)$ 
		with $f_k\to f$.\ If Assumption~\ref{ass:theta}(ii) is valid, 
		Corollary~\ref{cor:moment.app} implies $I_n f=\lim_{k\to\infty}I_n f_k$
		and therefore $I_n\colon\Ck(\Rd)\to\Ck(\Rd)$.\ In addition, it holds
		$\|I_n^k f\|_{\tilde{\kappa}}\leq e^{\omega kh_n}\|f\|_{\tilde{\kappa}}$ 
		for all $k,n\in\N$ and $f\in\Ck(\Rd)$.\ Hence, Corollary~\ref{cor:moment.app}
		guarantees that Assumption~\ref{ass:cher}(iii) is satisfied.\ If Assumption~\ref{ass:theta}(ii')
		is valid, we can use Lemma~\ref{cor:cutoff1} and Remark~\ref{rem:cutoff} instead. 
		Indeed, equation~\eqref{eq:resolvent1} and Assumption~\ref{ass:theta}(i) imply
		\begin{align*}
			\big\|I_n\big(\tfrac{r}{\kappa}(1-\zeta_x)\big)-\tfrac{r}{\kappa}(1-\zeta_x)\big\|_\kappa 
			&\leq\sup_{\theta\in\Theta}\big\|(\lambda_n-A^\theta)^{-1}A^\theta\big(\tfrac{r}{\kappa}(1-\zeta_x)\big)\big\|_\kappa \\
			&\leq \sup_{\theta\in\Theta}\big\|A^\theta\big(\tfrac{r}{\kappa}(1-\zeta_x)\big)\big\|_\kappa h_n
			\leq\epsilon h_n.
		\end{align*}
		
		Third, we verify condition~(ii) from Corollary~\ref{cor:unique2} and thus Assumption~\ref{ass:cher}(v).
        Let $f\in \Cbi(\Rd)$ and define
		\[ A_n f:=\frac{I_n f-f}{h_n} \quad\mbox{for all } n\in\N. \]
		For every $n\in\N$, it follows from equation \eqref{eq:resolvent1} that
		\[ A_n f=\sup_{\theta\in \Theta}\big((\lambda_n-\omega)\lambda_n(\lambda_n-A^\theta)^{-1} f\big)-(\lambda_n-\omega)f
		=\sup_{\theta\in \Theta}(\lambda_n-\omega)(\lambda_n-A^\theta)^{-1}A^\theta f. \]
		Hence, for every $n\in\N$ and $K\Subset\Rd$, Assumption~\ref{ass:theta}(i) yields
		\begin{align*}
			\|A_nf\|_\kappa 
			&\leq\sup_{\theta\in \Theta}\|(\lambda_n-\omega)(\lambda_n-A^\theta)^{-1}A^\theta f\|_\kappa
			\leq \sup_{\theta\in \Theta}\|A^\theta f\|_\kappa, \\
			\|A_nf-Af\|_{\infty,K}
			&\leq\sup_{\theta\in\Theta}\|(\lambda_n-\omega)(\lambda_n-A^\theta)^{-1}A^\theta f-A^\theta f\|_{\infty,K}.
		\end{align*}
		Assumption~\ref{ass:theta}(iv) yields that condition~(ii) from Corollary~\ref{cor:unique2} is valid.\
		Now, the claim follows from Theorem~\ref{thm:cher}, where property~(iv) holds with the
		constants~$L$ and $\omega+L$ rather than $\frac{L\lambda_0}{\lambda_0-\omega}$
		and~$\beta$ since $\lambda_n\to\infty$, and from Corollary~\ref{cor:unique2}.
	\end{proof}

	For the Yosida approximation, we need norm convergence of the resolvents.

	\begin{assumption} \label{ass:theta2}
		Suppose that the conditions~(i)--(iii) or the conditions~(i),~(ii') and~(iii) from 
		Assumption~\ref{ass:theta} are satisfied. Moreover, the following statement is valid: 
		\begin{itemize}
			\item[(iv')] It holds $\Cbi(\Rd)\subset\bigcap_{\theta\in \Theta}D(A^\theta)$ and, 
			for every $f\in \Cbi(\Rd)$,
			\[ \sup_{\theta\in \Theta}\|A^\theta f\|_\kappa<\infty
			\quad\mbox{and}\quad
			\lim_{\lambda\to\infty}\sup_{\theta\in\Theta}
			\|(\lambda-\omega)(\lambda-A^\theta)^{-1}A^\theta f-A^\theta f\|_\kappa=0. \]
		\end{itemize}
	\end{assumption}
	For every $n\in\N$ and $\theta\in\Theta$, we define 
	\[ A_n^\theta:=(\lambda_n-\omega)A^\theta(\lambda_n-A^\theta)^{-1}. \]
	Since $A_n^\theta\colon\Ck(\Rd)\to\Ck(\Rd)$ is a bounded linear operator, 
	we can define 
	\[ J_n f:=\sup_{\theta\in \Theta}e^{h_n A^\theta_n}f
	\quad\mbox{for all } n\in\N \mbox{ and } f\in\Ck(\Rd), \]
	where the operator exponential is given as the power series
	$ e^{h_nA^\theta_n}:=\sum_{k=0}^\infty\frac{(h_nA_n^\theta)^k}{k!}$. 
	Since the resolvent identity 
	\begin{equation} \label{eq:resolvent2}
		h_nA_n^\theta f=\lambda_n(\lambda_n-A^\theta)^{-1}f-f
	\end{equation}
	holds for all $n\in\N$, $f\in\Ck(\Rd)$ and $\theta\in\Theta$, we obtain
	\begin{equation} \label{eq:yosidarep2}
		J_{n}f=e^{-1}\sup_{\theta\in \Theta}e^{\lambda_n(\lambda_n-A^\theta)^{-1}}f  
		\quad\mbox{for all } n\in\N \mbox{ and } f\in \Ck(\Rd).
	\end{equation}

	\begin{theorem}
		Suppose that Assumption~\ref{ass:theta2} is satisfied.\ Then, there exists a strongly 
		continuous convex monotone semigroup $(T(t))_{t\geq 0}$ on $\Ck(\Rd)$ 
        with generator $B\colon D(B)\to\Ck(\Rd)$ given by
		\begin{equation} 
			T(t)f=\lim_{n\to\infty}J_n^{k_n^t}f \quad\mbox{for all } (f,t)\in\Ck(\Rd)\times\R_+,
		\end{equation}
		where $(k_n^t)_{n\in\N}\subset\N$ is an arbitrary sequence satisfying $k_n^t h_n\to t$.
		It holds
		\[ \Cbi(\Rd)\subset D(B) \quad\mbox{and}\quad
		Bf=\sup_{\theta\in\Theta}A^\theta f \quad\mbox{for all } f\in\Cbi(\Rd). \]
		In particular, we obtain $S(t)f=T(t)f$ for all $t\geq 0$ and $f\in\Ck(\Rd)$.
	\end{theorem} 
	\begin{proof}
		First, for every $n\in \N$, $r\geq 0$, $f\in \Lipb(\Rd,r)$ and $x\in \Rd$, 
		we show that 
		\begin{equation} \label{eq:yosida.tau}
			\|\tau_x J_n f- J_n(\tau_x f)\|_\infty
			\leq\frac{L\lambda_0 e^{\beta h_0}}{\lambda_0-\omega}rh_n|x|, 
		\end{equation}
		where $\beta:=\omega+\frac{L\lambda_0}{\lambda_0-\omega}$,
		$\lambda_0:=\min_{n\in\N}\lambda_n>\omega$ and $h_0:=\frac{1}{\lambda_0-\omega}=\sup_{n\in\N}h_n$.\
	 We define $B_{n,\theta}:=\lambda_n(\lambda_n-A^\theta)^{-1}$
		for all $n\in\N$ and $\theta\in\Theta$.\ Then, for every $n\in \N$, $r\geq 0$, $f\in \Lipb(\Rd,r)$,
		$x\in \Rd$ and $\theta\in\Theta$, it follows from Assumption~\ref{ass:theta}(iii) that
		\begin{equation} \label{eq:yosida.tau1}
			\|\tau_x B_{n,\theta}f-B_{n,\theta}(\tau_x f)\|_\infty
			\leq\frac{L\lambda_0}{\lambda_0-\omega}rh_n|x|.
		\end{equation}
		Furthermore, by induction, Assumption~\ref{ass:theta}(i) implies
		\begin{equation} \label{eq:yosida.tau2}
			\|B_{n,\theta}^kf-B_{n,\theta}^kg\|_\infty\leq (1+\omega h_n)^k\|f-g\|_\infty
		\end{equation}
		for all $k,n\in\N$, $f,g\in\Lipb(\Rd)$ and $\theta\in\Theta$.\ Hence, similar to the proof
		of Theorem~\ref{thm:euler}, one can show that
		\begin{equation} \label{eq:yosida.tau3}
			B_{n,\theta}^k\colon\Lipb(\Rd,r)\to\Lipb(\Rd,(1+\beta h_n)^k r)
		\end{equation}
		for all $k,n\in\N$, $r\geq 0$ and $\theta\in\Theta$. Combining the 
		inequalities~\eqref{eq:yosida.tau1}--\eqref{eq:yosida.tau3} yields
		\begin{align*}
			\|\tau_x B_{n,\theta}^k f-B_{n,\theta}^k(\tau_x f)\|_\infty
			&\leq\sum_{l=1}^k \|B_{n,\theta}^{k-l}(\tau_x B_{n,\theta}^l f)
			-B_{n,\theta}^{k-l}B_{n,\theta}(\tau_x B_{n,\theta}^{l-1}f)\|_\infty \\
			&\leq\sum_{l=1}^k (1+\omega h_n)^{k-l}
			\|\tau_x B_{n,\theta}B_{n,\theta}^{l-1}f-B_{n,\theta}(\tau_x B_{n,\theta}^{l-1}f)\|_\infty \\
			&\leq\sum_{l=1}^k (1+\omega h_n)^{k-l}\frac{L\lambda_0}{\lambda_0-\omega}
			(1+\beta h_n)^{l-1}rh_n|x| \\
			&\leq k(1+\beta h_n)^{k-1}\frac{L\lambda_0}{\lambda_0-\omega}rh_n|x|
		\end{align*}
		for all $n\in\N$, $r\geq 0$, $f\in\Lipb(\Rd,r)$, $x\in\Rd$ and $\theta\in\Theta$. 
		Equation~\eqref{eq:yosidarep2} implies
		\begin{align*}
			\|\tau_x J_n f-J_n(\tau_x f)\|_\infty
			&\leq e^{-1}\sup_{\theta\in\Theta}\|\tau_x e^{B_{n,\theta}}-e^{B_{n,\theta}}(\tau_x f)\|_\infty \\
			&\leq e^{-1}\sup_{\theta\in\Theta}\sum_{k=0}^\infty
			\frac{\|\tau_x B_{n,\theta}^k f-B_{n,\theta}^k(\tau_x f)\|_\infty}{k!} \\
			&\leq e^{-1}\sum_{k=1}^\infty \frac{(1+\beta h_n)^{k-1}}{(k-1)!}
			\frac{L\lambda_0}{\lambda_0-\omega}rh_n|x| \\
			&=\frac{L\lambda_0}{\lambda_0-\omega}e^{\beta h_n}rh_n|x|
			\leq\frac{L\lambda_0 e^{\beta h_0}}{\lambda_0-\omega}rh_n|x|
		\end{align*}
		for all $n\in\N$, $r\geq 0$, $f\in\Lipb(\Rd,r)$ and $x\in\Rd$. 
		
		Second, we verify Assumption~\ref{ass:cher} and the conditions from Corollary~\ref{cor:unique2}.\
        We have already shown that condition~(i) from Corollary~\ref{cor:unique2} is satisfied.\
        Moreover, it follows from equation~\eqref{eq:yosidarep2} and equation~\eqref{eq:yosida.tau3} that 
		\[ J_n\colon\Lipb(\Rd,r)\to\Lipb(\Rd, e^{\beta h_n}r) 
		\quad\mbox{for all } n\in\N \mbox{ and } r\geq 0. \]
		Similarly, one can derive from Assumption~\ref{ass:theta}(i) that
		\[ \|J_n f-J_n g\|_\kappa\leq e^{\omega h_n}\|f-g\|_\kappa
		\quad\mbox{for all } n\in\N \mbox{ and } f,g\in\Ck(\Rd). \]
		Clearly, the operators $J_n\colon\Ck(\Rd)\to\Fk(\Rd)$ are convex and monotone with $J_n 0=0$.
		Let $f\in\Ck(\Rd)$ and $(f_k)_{k\in\N}\subset\Lipb(\Rd)$ 
		with $f_k\to f$.\ If Assumption~\ref{ass:theta}(ii) is valid, 
		Corollary~\ref{cor:moment.app} implies $J_n f=\lim_{k\to\infty}J_n f_k$
		and thus $J_n\colon\Ck(\Rd)\to\Ck(\Rd)$.\ Moreover, it holds
		$\|J_n^k f\|_{\tilde{\kappa}}\leq e^{\omega kh_n}\|f\|_{\tilde{\kappa}}$ 
		for all $k,n\in\N$ and $f\in\Ck(\Rd)$.\ Hence, Corollary~\ref{cor:moment.app}
		guarantees that Assumption~\ref{ass:cher}(iii) is valid.\ If Assumption~\ref{ass:theta}(ii')
		is satisfied, we can use Lemma~\ref{cor:cutoff1} and Remark~\ref{rem:cutoff} instead. 
		Indeed, Assumption~\ref{ass:theta}(i) implies 
		\begin{align*}
			\big\|J_n\big(\tfrac{r}{\kappa}(1-\zeta_x)\big)-\tfrac{r}{\kappa}(1-\zeta_x)\big\|_\kappa
			&\leq \sup_{\theta\in\Theta}\int_0^1\big\|e^{sh_n A_n^\theta}h_n A_n^\theta
			\big(\tfrac r\kappa (1-\zeta_x)\big)\big\|_\kappa\, \d s \\
			&\leq \sup_{\theta\in\Theta}e^{\omega h_n}h_n\big\|A^\theta\big(\tfrac r\kappa (1-\zeta_x)\big)\big\|_\kappa
			\leq e^{\omega h_0}\epsilon h_n. 
		\end{align*}
		It remains to verify condition~(ii) from Corollary~\ref{cor:unique2} and thus Assumption~\ref{ass:cher}(v).
		We use Assumption~\ref{ass:theta}(i) and equation~\eqref{eq:resolvent2} to estimate
		\begin{align*}
			\Big\|\frac{e^{h_n A_n^\theta}f-f}{h_n}-A_n^\theta f\Big\|_\kappa
			&\leq\int_0^1 \|e^{sh_n A_n^\theta}A_n^\theta f-A_n^\theta f\|_\kappa\,\d s
            \leq\int_0^1 \|e^{sh_n A_n^\theta}A^\theta f-A^\theta f\|_\kappa\,\d s \\
			&\leq e^{\omega h_n}h_n\|A_n^\theta A^\theta f\|_\kappa =e^{\omega h_n}\|\lambda_n(\lambda_n-A^\theta)^{-1}A^\theta f-A^\theta f\|_\kappa \\
			&\leq e^{\omega h_n}\|(\lambda-\omega)(\lambda_n-A^\theta)^{-1}A^\theta f-A^\theta f\|_\kappa
			+\omega h_n\|A^\theta f\|_\kappa.
		\end{align*}
        for all $n\in\N$, $f\in \Cbi(\Rd)$ and $\theta\in\Theta$.
		Assumption~\ref{ass:theta2}(iv') and equation~\eqref{eq:yosidarep2} imply
		\[ \lim_{n\to\infty}\left\|\frac{J_n f-f}{h_n}-\sup_{\theta\in\Theta}A^\theta f\right\|_\kappa=0. \]
		Now, the claim follows from Theorem~\ref{thm:cher} and Corollary~\ref{cor:unique2}.
	\end{proof}

	\subsection{Vanishing viscosity and stability of convex HJB equations}
	\label{sec:VanVis}

	We provide a stability result for convex HJB equations which covers, in particular, the vanishing 
    viscosity method.\ Let $\kappa\equiv 1$ and let $\Cb(\Rd)$ be the space
	of all bounded continuous functions $f\colon\Rd\to\R$.\
	Let $(\varphi_n)_{n\in\N}$ be a sequence of functions $\varphi_n\colon\S^d_+\times\Rd\to [0,\infty]$,
	where $\S^d_+$ consists of all symmetric positive semi-definite $d\times d$-matrices,
	and define
	\[ H_n(a,b):=\sup_{(\tilde{a},\tilde{b})\in\S^d_+\times\Rd}
        \Big(\frac{1}{2}\tr(a\tilde{a})+b^T\tilde{b}-\varphi_n(\tilde{a},\tilde{b})\Big)
	   \quad\mbox{for all } (a,b)\in\R^{d\times d}\times\Rd. \]
	Here, we denote by $\tr(a\tilde{a})$ the matrix trace and by $b^T\tilde{b}$ the Euclidean inner product.
	Under the condition $H_n\to H$, we will show that a sequence of semigroups
	$(S_n(t))_{t\geq 0}$, whose generators are given by 
	\[ (A_n f)(x)=H_n(D^2 f(x), Df(x)) \quad\mbox{for all } f\in\Cbi(\Rd), \]
	converges to a semigroup $(S(t))_{t\geq 0}$ with generator $Af=H(D^2 f, Df)$.\ Here and throughout, $Df$ denotes the gradient of $f$ and $D^2f$ its Hessian matrix.

	\begin{assumption} \label{ass:Hn}
		Suppose that the following conditions are satisfied:
		\begin{enumerate}
			\item[(i)] For every $n\in\N$, there exists $(a,b)\in\S^d_+\times\Rd$ with $\varphi_n(a,b)=0$. 
			\item[(ii)] The penalization functions grow superlinearly, i.e., 
			\[ \lim_{|a|+|b|\to\infty}\inf_{n\in\N}\frac{\varphi_n(a,b)}{|a|+|b|}=\infty. \]
			\item[(iii)] There exists a function $H\colon\R^{d\times d}\times\Rd\to\R$ with
			\[ \lim_{n\to\infty}\sup_{(a,b)\in K}|H_n(a,b)-H(a,b)|=0 
			\quad\mbox{for all } K\Subset\R^{d\times d}\times\Rd. \]
		\end{enumerate}
	\end{assumption}

	Condition~(ii) guarantees that $(H_n)_{n\in\N}$ is a sequence of 
	real-valued functions.

	\begin{theorem}
		Suppose that Assumption~\ref{ass:Hn} is valid.\ Then, there exists a sequence
		$(S_n)_{n\in\N}$ of strongly continuous convex monotone semigroups $(S_n(t))_{t\geq 0}$
		on $\Cb(\Rd)$ with generators $A_n\colon D(A_n)\to\Cb(\Rd)$ such that
		\[ \Cbi(\Rd)\subset D(A_n) \quad\mbox{and}\quad A_n f=H_n(D^2 f, Df) \]
		for all $n\in\N$ and $f\in\Cbi(\Rd)$ which satisfy Assumption~\ref{ass:Sn2}
        and the conditions from Corollary~\ref{cor:unique1}.\ Hence, 
		there exists another strongly continuous convex monotone semigroup $(S(t))_{t\geq 0}$ 
		on $\Cb(\Rd)$ with generator $A\colon D(A)\to\Cb(\Rd)$ such that
		\[ S(t)f=\lim_{n\to\infty}S_n(t)f \quad\mbox{for all } (f,t)\in\Cb(\Rd)\times\R_+ \]
		satisfying $\Cbi(\Rd)\subset D(A)$ and $Af=H(D^2 f, Df)$ for all $f\in\Cbi(\Rd)$. 
	\end{theorem}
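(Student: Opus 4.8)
The plan is to realise each $S_n$ as the solution semigroup of the convex Hamilton--Jacobi--Bellman equation $\partial_t u=H_n(D^2u,Du)$ on $\Cb(\R^d)$, to verify that the sequence $(S_n)_{n\in\N}$ satisfies Assumption~\ref{ass:Sn2} (equivalently Assumption~\ref{ass:Sn} with $\T_n:=\R_+$ and $X_n:=\R^d$), and then to quote Theorem~\ref{thm:Sn2}. The limit identity $S(t)f=\lim_{n\to\infty}S_n(t)f$ and the generator identity $Af=\lim_{n\to\infty}A_nf=H(D^2f,Df)$ on $\Cbi(\R^d)$ are then part of the conclusion of Theorem~\ref{thm:Sn2}, and uniqueness of $(S(t))_{t\ge0}$ follows from Theorem~\ref{thm:comp}, whose hypotheses are also delivered by that conclusion.

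First I would record the structure of $H_n$ and the basic properties of $S_n$. As a supremum of functions that are affine in $(a,b)$, each $H_n$ is convex; since $a'\succeq0$ in the supremum, $a\mapsto H_n(a,b)$ is monotone for the positive semi-definite order; and Assumption~\ref{ass:Hn}(ii) makes $H_n$ finite, hence locally Lipschitz. Thus $\partial_t u=H_n(D^2u,Du)$ is a proper, degenerate elliptic, translation-invariant equation, and by standard viscosity-solution theory (cf.\ \cite{CIL, FS}; see also \cite{DKN20, NR21} for the convex/Nisio viewpoint) it generates a strongly continuous convex monotone semigroup $S_n$ on $\Cb(\R^d)$ (convexity of the semigroup reflecting convexity of $H_n$, monotonicity the comparison principle) with $\Cbi(\R^d)\subset D(A_n)$ and $A_nf=H_n(D^2f,Df)$. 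From $H_n(0,0)=0$ (Assumption~\ref{ass:Hn}(i)) and the comparison principle I obtain $S_n(t)0=0$ and $\|S_n(t)f-S_n(t)g\|_\infty\le\|f-g\|_\infty$, which is Assumption~\ref{ass:Sn2}(i) with $c=1$; translation invariance gives $S_n(t)(\tau_xf)=\tau_xS_n(t)f$, so Assumption~\ref{ass:Sn2}(iii) holds trivially with $f_n:=f$; and since constants are fixed, $\|S_n(t)f\|_\infty\le\|f\|_\infty$, so combining with the previous displays $S_n(t)\colon\Lipb(\R^d,r)\to\Lipb(\R^d,r)$, which is Assumption~\ref{ass:Sn2}(iv). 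All of these bounds are uniform in $n$.

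The crux is Assumption~\ref{ass:Sn2}(ii), the equicontinuity on bounded sets in the mixed topology, uniformly in $n$. Since $\kappa\equiv1$ I would deduce it from Corollary~\ref{cor:cutoff4}, whose only substantial input is an $n$-independent modulus $\rho$ with $\|A_nf\|_\infty=\|H_n(D^2f,Df)\|_\infty\le\rho(\|Df\|_\infty+\|D^2f\|_\infty)$ for $f=r(1-\zeta)$, $\zeta\in\Cci(\R^d)$. This is exactly where the uniformity built into Assumption~\ref{ass:Hn} is used: by (ii) there is $R$ with $\varphi_n(a',b')\ge|a'|+|b'|$ whenever $|a'|+|b'|\ge R$, uniformly in $n$, and together with the zero of $\varphi_n$ from (i) this forces both the (near-)minimizers of $\varphi_n$ and every pair $(a',b')$ nearly attaining the supremum defining $H_n(a,b)$ to lie in a bounded set depending only on $|a|+|b|$, not on $n$; hence $\sup_n|H_n(a,b)|$ is bounded on compact subsets of $\R^{d\times d}\times\R^d$, giving the modulus $\rho$ (and, by convexity, an $n$-uniform local Lipschitz constant for $H_n$). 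The remaining hypotheses of Corollary~\ref{cor:cutoff4}, namely $r(1-\zeta)\in D(A_n)$ and the $\|\cdot\|_\infty$-differentiability of $t\mapsto S_n(t)(r(1-\zeta))$ at $0$, are standard for these semigroups. I expect this $n$-uniform local bound on $H_n$ to be the one genuinely non-routine point.

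Finally, Assumption~\ref{ass:Sn2}(v): for $f\in\Cbi(\R^d)$ take $f_n:=f\in D(A_n)$; then $A_nf=H_n(D^2f,Df)$ converges in $\Ck(\R^d)=\Cb(\R^d)$, i.e.\ in the mixed topology, because the sequence is bounded by the modulus $\rho$ of the previous step and converges locally uniformly, since $x\mapsto(D^2f(x),Df(x))$ takes values in a fixed compact set on which $H_n\to H$ uniformly by Assumption~\ref{ass:Hn}(iii). This verifies Assumption~\ref{ass:Sn2}, so Theorem~\ref{thm:Sn2} applies and yields the semigroup $(S(t))_{t\ge0}$ on $\Cb(\R^d)$ with $S(t)f=\lim_{n\to\infty}S_n(t)f$ for all $(f,t)$, together with $\Cbi(\R^d)\subset D(A)$ and $Af=\lim_{n\to\infty}A_nf=H(D^2f,Df)$ on $\Cbi(\R^d)$, and, via Theorem~\ref{thm:comp}, the uniqueness of $(S(t))_{t\ge0}$.
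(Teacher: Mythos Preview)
Your overall strategy is correct and the second half of your argument---verifying Assumption~\ref{ass:Sn2} via Corollary~\ref{cor:cutoff4} and then invoking Theorem~\ref{thm:Sn2}---matches the paper exactly, including the key observation that the uniform superlinear growth in Assumption~\ref{ass:Hn}(ii) yields an $n$-independent modulus $\rho$ controlling $\|H_n(D^2f,Df)\|_\infty$.

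The substantive difference lies in how the semigroups $S_n$ are produced. You appeal to external viscosity/Nisio theory, whereas the paper stays within its own framework and constructs each $S_n$ via the Chernoff machinery of Theorem~\ref{thm:cher}: for fixed $n$, one defines the explicit one-step operators
\[
(I_n(t)f)(x):=\sup_{(a,b)\in\S^d_+\times\R^d}\big(\E[f(x+\sqrt{a}W_t+bt)]-\varphi_n(a,b)t\big),
\]
verifies Assumption~\ref{ass:cher} for the sequence $(I_n(h_k))_{k\in\N}$ with $h_k\downarrow 0$ (using It\^o's formula to compute the generator and Corollary~\ref{cor:cutoff2} for equicontinuity), and obtains $S_n$ as the limit. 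This is in keeping with the paper's stated aim of avoiding viscosity theory altogether; it also delivers the listed properties (i)--(v) of $S_n$ directly as consequences of Theorem~\ref{thm:cher} rather than as imports.

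Your route is shorter but leaves more to the reader. In particular, the sentence ``the remaining hypotheses of Corollary~\ref{cor:cutoff4} \ldots\ are standard for these semigroups'' hides the one delicate point: Corollary~\ref{cor:cutoff4} requires the \emph{norm} convergence $\lim_{h\downarrow 0}\|\frac{S_n(h)f-f}{h}-A_nf\|_\infty=0$ for $f=r(1-\zeta)$, not merely convergence in the mixed topology, and this is not an automatic byproduct of a viscosity-solution construction. In the paper's approach the explicit It\^o-formula representation of $\frac{I_n(h)f-f}{h}$ makes this verifiable by hand (the derivatives of $r(1-\zeta)$ are compactly supported, hence uniformly continuous). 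If you keep your construction, you should either cite a result giving this norm differentiability for Nisio-type envelopes, or supply a short direct argument.
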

	\begin{proof}
		We construct the semigroups $(S_n(t))_{t\geq 0}$ by using Theorem~\ref{thm:cher}
        and Corollary~\ref{cor:unique2}.
		To do so, for every $n\in\N$, $t\geq 0$, $f\in\Cb(\Rd)$ and $x\in\Rd$, we define
		\[ (I_n(t)f)(x):=\sup_{(a,b)\in\S^d_+\times\Rd}\big(\E[f(x+\sqrt{a}W_t+bt)]-\varphi_n(a,b)t\big), \]
		where $\sqrt{a}\in\S^d_+$ satisfies $\sqrt{a}\sqrt{a}=a$ and $(W_t)_{t\geq 0}$ is a $d$-dimensional Brownian motion on a filtered probability space $(\Omega,\F,(\F_t)_{t\geq 0},\P)$.\
		It is straightforward to show that the operators $I_n(t)\colon\Cb(\Rd)\to\Cb(\Rd)$ 
		are well-defined and satisfy
		\begin{itemize}
			\item $I_n(t)$ is convex and monotone with $I_n(t)0=0$ for all $n\in\N$ and $t\geq 0$, 
			\item $\|I_n(t)f-I_n(t)g\|_\infty\leq\|f-g\|_\infty$ for all $n\in\N$, $t\geq 0$ and $f,g\in\Cb(\Rd)$, 
			\item $I_n(t)(\tau_x f)=\tau_x I_n(t)f$ for all $n\in\N$, $t\geq 0$, $f\in\Cb(\Rd)$ and $x\in\Rd$, 
			\item $I_n(t)\colon\Lipb(\Rd,r)\to\Lipb(\Rd,r)$ for all $n\in\N$ and $r,t\geq 0$. 
		\end{itemize}
		Let $n\in\N$ and $f\in\Cbi(\Rd)$. Due to Assumption~\ref{ass:Hn}(ii), there exists $r\geq 0$ with
		\begin{align*}
			H_n(D^2 f(x),Df(x)) &=\sup_{|a|+|b|\leq r}\Big(\frac{1}{2}\tr(aD^2 f(x))+b^T Df(x)-\varphi_n(a,b)\Big), \\
			(I_n(t)f)(x) &=\sup_{|a|+|b|\leq r}\big(\E[f(x+\sqrt{a}W_t+bt)]-\varphi_n(a,b)t\big)
		\end{align*}
		for all $t\geq 0$ and $x\in\Rd$. Hence, for every $h>0$, It\^o's formula implies
		\begin{align*}
		&\frac{I_n(h)f-f}{h}\\
		&=\sup_{|a|+|b|\leq r}\bigg(\E\bigg[\frac{1}{h}\int_0^h 
		\Big(\frac{1}{2}\tr(aD^2 f+b^T Df\Big)(\,\cdot\,+\sqrt{a}W_s+bs)\,\d s\bigg]-\varphi_n(a,b)\bigg) 
		\end{align*}
		and the right-hand side converges uniformly to $H_n(D^2 f,Df)$ as $h\to 0$. Furthermore, 
		\[ \left\|\frac{I_n(h)f-f}{h}\right\|_\infty
		\leq\sup_{(a,b)\in\S^d_+\times\Rd}\Big(\frac{1}{2}|a|\cdot\|D^2f\|_\infty+|b|\cdot\|Df\|_\infty
		-\varphi_n(a,b)\Big). \]
		Let $n\in\N$ and $(h_k)_{k\in\N}\subset (0,\infty)$ be a sequence with $h_k\to 0$.\
        Define $I_{n, k}f:=I_n(h_k)f$ for all $f\in\Cb(\Rd)$.\ Due to the previous arguments
		and Corollary~\ref{cor:cutoff2}, the sequence $(I_{n,k})_{k\in\N}$ satisfies Assumption~\ref{ass:cher}
        and the conditions from Corollary~\ref{cor:unique2}.\ Hence, by Theorem~\ref{thm:cher} 
        and Corollary~\ref{cor:unique2}, there exists a strongly continuous convex monotone semigroup 
        $(S_n(t))_{t\geq 0}$ on $\Cb(\Rd)$ with 
		\[ S_n(t)f=\lim_{k\to\infty}(I_{n,k})^{m_k^t}f \quad\mbox{for all } (f,t)\in\Cb(\Rd)\times\R_+,\]
		where $(m_k^t)_{k\in\N}\subset\N$ is an arbitrary sequence satisfying $m_k^t h_k\to t$ as $k\to\infty$.\ 
		Moreover, the following statements are valid:
		\begin{enumerate}
			\item[(i)] $\Cbi(\Rd)\subset D(A_n)$ and $A_n f=H_n(D^2 f, Df)$ for all $f\in\Cbi(\Rd)$. 
			\item[(ii)] $\|S_n(t)f-S_n(t)g\|_\infty\leq\|f-g\|_\infty$ for all $n\in\N$, $t\geq 0$ and $f,g\in\Cb(\Rd)$.
			\item[(iii)] For every $\epsilon>0$, $r,T\geq 0$ and $K\Subset\Rd$, there exist $c\geq 0$ 
			and $K'\Subset\Rd$ with
			\[ \|S_n(t)f-S_n(t)g\|_{\infty,K}\leq c\|f-g\|_{\infty,K'}+\epsilon \]
			for all $t\in [0,T]$ and $f,g\in B_{\Cb(\Rd)}(r)$. 
			\item[(iv)] $S_n(t)(\tau_x f)=\tau_x S_n(t)f$ for all $n\in\N$, $t\geq 0$, $f\in\Cb(\Rd)$ and $x\in\Rd$.
			\item[(v)] $S_n(t)\colon\Lipb(\Rd,r)\to\Lipb(\Rd,r)$ for all $n\in\N$ and $r,t\geq 0$. 
		\end{enumerate}
		In particular, Theorem~\ref{thm:unique}(c) guarantees that the semigroup $(S_n(t))_{t\geq 0}$ 
        does not depend on the choice of the sequence $(h_k)_{k\in\N}$.\
		Finally, we note that the Assumption~\ref{ass:Sn2}(i) and~(iii) and the conditions from
        Corollary~\ref{cor:unique1} are clearly satisfied.\ 
        Moreover, Corollary~\ref{cor:cher} and Corollary~\ref{cor:cutoff4} 
        guarantee that Assumption~\ref{ass:Sn2}(ii) is valid.\
		Hence, the second part of the claim follows from Theorem~\ref{thm:Sn2} and Corollary~\ref{cor:unique1}.
	\end{proof}

\subsection{Large deviations for randomized Euler schemes}
	\label{sec:LargeDev}
    We begin by studying randomized Euler schemes for ODEs with Lipschitz coefficients.\
    Let $\psi\colon\Rd\to\Rd$ be a bounded Lipschitz continuous function.\
	The aim is to show that the unique solution of the parameter
	dependent ODE 
	\begin{equation} \label{eq:ODE}
		\begin{cases}
			\partial_t u(t,x)=\psi(u(t,x)),  & t\geq 0, \\
			u(0,x)=x,  & x\in\Rd, 
		\end{cases}
	\end{equation}
	can be approximated by a randomized Euler scheme under model uncertainty.\
	For that purpose, let $(h_n)_{n\in\N}\subset (0,\infty)$ be a sequence with $h_n\to 0$ 
	and let $(\xi_n)_{n\in\N}$ be a sequence of i.i.d.~random variables on a sublinear 
	expectation space $(\Omega,\H, \e)$ with $\e[|\xi_1]^3]<\infty$ and 
	$\e[a^T\xi_1]=0$ for all $a\in\Rd$.\ In case that $\e[\,\cdot\,]=\E[\,\cdot\,]$ is a linear 
	expectation, there exists a unique probability measure $\mu$ on $\B(\Rd)$ with
	\[ \E[f(\xi_n)]=\int_{\Rd}f(z)\,\mu(\d z) \quad\mbox{for all } n\in\N \mbox{ and } f\in\Cb(\Rd). \]
	Furthermore, if the distribution $\mu$ of the random variables $(\xi_n)_{n\in\N}$ is uncertain, 
	a worst case approach consists in taking the supremum over a set of measures, i.e.,
	\[ \e[f(\xi_n)]:=\sup_{\mu\in M}\int_{\Rd}f(z)\,\mu(\d z). \]
	For more details, we refer to~\cite{Peng19} and~\cite[Appendix~B]{BK22}.\ For every 
	$n\in\N$, $\delta>0$, $t\geq 0$ and $x\in\Rd$, we define a randomized 
	Euler scheme under model uncertainty by
	\[ X^{n,\delta,x}_0:=x \quad\mbox{and}\quad
	X^{n,\delta,x}_{(k+1) h_n} 
	:=X^{n,\delta,x}_{kh_n}+h_n\psi\left(X^{n,\delta,x}_{k h_n}\right)+\delta \sqrt{h_n}\xi_{k+1} \]
	and show that $X^{n,\delta_n,x}_{k_n^t h_n}\to u(t,x)$ for all $(t,x)\in\R_+\times\Rd$,
	where $(\delta_n)_{n\in\N}\subset (0,\infty)$ and $(k_n^t)_{n\in\N}\subset\N$ are 
	sequences with $\delta_n\to 0$ and $k_n^t h_n\to t$. To be precise, we show that
	\[ \e\left[f\left(X^{n,\delta_n,x}_{k_n^t h_n}\right)\right]\to f(u(t,x)) 
	\quad\mbox{for all } f\in\Cb(\Rd). \]
	This means that, regardless of possible numerical errors, the Euler scheme converges 
	to the solution of~\eqref{eq:ODE}.\ Moreover, if $\delta_n\to\delta>0$, 
	the Euler scheme converges to the solution of a stochastic differential equation 
	driven by a G-Brownian motion, cf.~\cite{Peng19}. Define
	\[ (I_{n,\delta}f)(x):=\e\left[f\left(x+h_n\psi(x)+\delta \sqrt{h_n}\xi_1\right)\right] \]
	for all $n\in\N$, $\delta>0$, $f\in\Cb(\Rd)$ and $x\in\Rd$. Let $L$ be the Lipschitz constant
	of $\psi$.

	\begin{theorem} \label{thm:randomEuler} 
		Let $(\delta_n)_{n\in\N}\subset(0,\infty)$ with $\delta_n\to\delta\in\R_+$.\
		Then, there exists a strongly continuous convex monotone semigroup $(S_\delta(t))_{t\geq 0}$ 
		on $\Cb(\Rd)$ with generator $A_\delta\colon D(A_\delta)\to\Cb(\Rd)$ such that
		\[ S_\delta(t)f=\lim_{n\to\infty}(I_{n,\delta_n})^{k_n^t}f 
		=\lim_{n\to\infty}\e\left[f\left(X^{n,\delta_n,\,\cdot}_{k_n^t h_n}\right)\right]
		\quad\mbox{for all } (f,t)\in\Cb(\Rd)\times\R_+, \]
		where $(k_n^t)_{n\in\N}\subset\N$ satisfies $k_n^t h_n\to t$.\
		Moreover, the following statements are valid:
		\begin{enumerate}
			\item[(i)] It holds $\Cbi(\Rd)\subset D(A_\delta)$ and 
			\[ (A_\delta f)(x)=\frac{1}{2}\delta^2\e\big[\xi_1^T D^2 f(x)\xi_1\big]+Df(x)^T\psi(x)
			\quad\mbox{for all } f\in\Cbi(\Rd). \]
			\item[(ii)] It holds $\|S_\delta(t)f-S_\delta(t)g\|_\infty\leq\|f-g\|_\infty$ for all $t\geq 0$ 
			and $ f,g\in\Cb(\Rd)$.
			\item[(iii)] For every $\epsilon>0$, $r,T\geq 0$ and $K\Subset\Rd$, there exist $c\geq 0$ 
			and $K'\Subset\Rd$ with
			\[ \|S_\delta(t)f-S_\delta(t)g\|_{\infty,K}\leq c\|f-g\|_{\infty,K'}+\epsilon \]
			for all $t\in [0,T]$ and $f,g\in B_{\Cb(\Rd)}(r)$. 
			\item[(iv)] For every $r, t\geq 0$, $f\in\Lipb(\Rd,r)$ and $x\in\Rd$,
			\[ \|S_\delta(t)(\tau_x f)-\tau_x S_\delta(t)f\|_\infty\leq Le^{Lt}rt|x|. \]
			Furthermore, it holds $S_\delta(t)\colon\Lipb(\Rd,r)\to\Lipb(\Rd,e^{Lt}r)$ for all $r,t\geq 0$. 
		\end{enumerate}
		In particular, it holds $(S_\delta(t)f)(x)=\e[f(X_t^{\delta,x})]$ for all $t\geq 0$, $f\in\Cb(\Rd)$ 
		and $x\in\Rd$, where $X_t^{\delta,x}$ denotes the unique solution of the parameter
		dependent ordinary (stochastic) differential equation
		\begin{equation} \label{eq:SDE}
			\begin{cases}
				\d X^{\delta,x}_t=\psi(X^{\delta,x}_t)\d t+\delta\d W_t, & t\geq 0, \\
				X^{\delta,x}_0=x, & x\in\Rd, 
			\end{cases}
		\end{equation}
		which is driven by a G-Brownian motion $(W_t)_{t\geq 0}$ with 
		$\e[W_1W_1^T]=\e[\xi_1\xi_1^T]$. 
	\end{theorem}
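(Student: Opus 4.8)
The plan is to apply the Chernoff-type Theorem~\ref{thm:cher} to the one-step operators $I_n:=I_{n,\delta_n}$ (with $X_n=\R^d$ and $\T_n=\{kh_n\colon k\in\N_0\}$) and then to identify the resulting limit semigroup with the G-SDE semigroup via the uniqueness statement Theorem~\ref{thm:comp}. Since $\kappa\equiv1$, $\Ck(\R^d)=\Cb(\R^d)$ and all norms below are sup-norms over $\R^d$ or over compact subsets. The easy part is conditions~(i),~(iii) and~(iv) of Assumption~\ref{ass:cher}: convexity, monotonicity and $I_{n,\delta}0=0$ are inherited from $\e$; from $|\e[F]-\e[G]|\le\e[|F-G|]$ one gets $\|I_{n,\delta}f-I_{n,\delta}g\|_\infty\le\|f-g\|_\infty$, so~(i) holds with $\omega:=L$, the Lipschitz constant of $\psi$; for $f\in\Lipb(\R^d,r)$ one has $|I_{n,\delta}f(x)-I_{n,\delta}f(y)|\le r|x-y+h_n(\psi(x)-\psi(y))|\le re^{Lh_n}|x-y|$ and $\|I_{n,\delta}f\|_\infty\le r$, giving~(iv); and $|I_{n,\delta}(\tau_x f)(y)-(\tau_x I_{n,\delta}f)(y)|\le rh_n|\psi(y)-\psi(x+y)|\le Lrh_n|x|$ for \emph{every} $x\in\R^d$, which gives~(iii) with constant $L$ (and any $\delta_0\in(0,1]$), and, being valid for all $x$, will later upgrade the translation estimate to all of $\R^d$.

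The analytic core is condition~(v). Fix $f\in\Cbi(\R^d)$ and write $Y:=h_n\psi(x)+\delta_n\sqrt{h_n}\,\xi_1$. Using a third-order Taylor expansion of $f$ at $x$ and $\e[c+Z]=c+\e[Z]$ for constants $c$,
\[
 \frac{(I_{n,\delta_n}f)(x)-f(x)}{h_n}
 =Df(x)^T\psi(x)+\frac1{h_n}\,\e\!\left[\delta_n\sqrt{h_n}\,Df(x)^T\xi_1+\tfrac12 Y^TD^2f(x)Y+R\right],
\]
with $|R|\le C\|D^3f\|_\infty|Y|^3$. The delicate point is that the summand $\delta_n\sqrt{h_n}\,Df(x)^T\xi_1$ is only of order $h_n^{1/2}$ and cannot be put into the remainder; but since $\e[a^T\xi_1]=\e[-a^T\xi_1]=0$ for all $a\in\R^d$, subadditivity and positive homogeneity of $\e$ give
\[
 \e\!\left[\delta_n\sqrt{h_n}\,Df(x)^T\xi_1+\tfrac12\delta_n^2h_n\,\xi_1^TD^2f(x)\xi_1\right]
 =\tfrac12\delta_n^2h_n\,\e[\xi_1^TD^2f(x)\xi_1],
\]
while $\e$ of the absolute value of all remaining terms (the $O(h_n^2)$ deterministic Hessian term, the $O(h_n^{3/2})$ cross term, and $R$) is $O(h_n^{3/2})$, using $\e[|\xi_1|^3]<\infty$. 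Dividing by $h_n$ and letting $n\to\infty$ (so $\delta_n^2\to\delta^2$) yields, uniformly in $x$,
\[
 \frac{(I_{n,\delta_n}f)(x)-f(x)}{h_n}\longrightarrow\tfrac12\delta^2\,\e[\xi_1^TD^2f(x)\xi_1]+Df(x)^T\psi(x)=:g(x),
\]
with $g\in\Cb(\R^d)$ (continuity of $A\mapsto\e[\xi_1^TA\xi_1]$ uses $\e[|\xi_1|^2]<\infty$) and $\sup_n\big\|\tfrac{I_{n,\delta_n}f-f}{h_n}\big\|_\infty<\infty$; this is condition~(v) and identifies $A_\delta$ on $\Cbi(\R^d)$.

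For condition~(ii) I would invoke Corollary~\ref{cor:cutoff2} with $N=3$: since $1/\kappa\equiv1$ its hypotheses on $\kappa$ are vacuous, and the same Taylor estimate applied to $f=r(1-\zeta)$, $\zeta\in\Cci(\R^d)$, bounds $\big\|\tfrac{I_{n,\delta_n}f-f}{h_n}\big\|_\infty$ by $\rho(\|Df\|_\infty+\|D^2f\|_\infty+\|D^3f\|_\infty)$ for a linear $\rho$ (slope depending on $\|\psi\|_\infty$, $\sup_n\delta_n$, $\sup_n h_n$ and $\e[|\xi_1|^3]$); since $r(1-\zeta_x)\in\Lipb(\R^d)$, Remark~\ref{rem:cutoff} handles the fact that $I_{n,\delta}$ need a priori only be understood on such functions. (Alternatively, check~(ii) directly from $|I_n^kf(x)-I_n^kg(x)|\le\|f-g\|_{\infty,K'}+2r\,\e[\one_{X^{n,\delta_n,x}_{kh_n}\notin K'}]$ and a Chebyshev bound using $\e\big[|\sum_{j\le k}\xi_j|^2\big]\le k\,\e[|\xi_1|^2]$, itself a consequence of the i.i.d. property and $\e[a^T\xi_1]=0$.) With Assumption~\ref{ass:cher} verified, Theorem~\ref{thm:cher} delivers $(S_\delta(t))_{t\ge0}$ with $S_\delta(t)0=0$ and statements~(i)--(iv): (i) from the consistency computation; (ii) from Theorem~\ref{thm:cher}(ii) with $\omega=L$, but since each $I_{n,\delta_n}$ and hence each $I_{n,\delta_n}^{k}$ is $\|\cdot\|_\infty$-nonexpansive, passing to the pointwise limit removes the exponential factor; (iii) from Theorem~\ref{thm:cher}(iii); (iv) from Theorem~\ref{thm:cher}(iv), upgraded to all $x\in\R^d$ by passing to the limit in $\|I_{n,\delta_n}^{k}(\tau_x f)-\tau_x I_{n,\delta_n}^{k}f\|_\infty\le Lrkh_n e^{Lkh_n}|x|$, which holds for every $x$. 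Finally $\e[f(X^{n,\delta_n,x}_{kh_n})]=(I_{n,\delta_n}^{k}f)(x)$ by induction using the i.i.d. property of $(\xi_j)$, which gives the stated representation of $S_\delta(t)f$.

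It remains to show $(S_\delta(t)f)(x)=\e[f(X^{\delta,x}_t)]$. I would put $\tilde S(t)f(x):=\e[f(X^{\delta,x}_t)]$, where $X^{\delta,x}$ is the unique solution of~\eqref{eq:SDE} (existence and uniqueness being standard for Lipschitz drift and constant diffusion, including $\delta=0$ where~\eqref{eq:SDE} reduces to~\eqref{eq:ODE}), and verify that $(\tilde S(t))_{t\ge0}$ is a strongly continuous convex monotone semigroup on $\Cb(\R^d)$ with $\tilde S(t)0=0$ meeting the hypotheses of Theorem~\ref{thm:comp}: the semigroup property is the flow property of the SDE; the pathwise Gronwall bound $|X^{\delta,x}_t-X^{\delta,y}_t|\le|x-y|+L\int_0^t|X^{\delta,x}_s-X^{\delta,y}_s|\,\d s\le e^{Lt}|x-y|$ yields conditions~(ii) and~(v) and, via $X^{\delta,x+z}_t-z-X^{\delta,x}_t=\int_0^t(\psi(X^{\delta,x+z}_s)-\psi(X^{\delta,x}_s))\,\d s$, also condition~(iv); condition~(iii) follows from the a priori bound $|X^{\delta,x}_t|\le|x|+T\|\psi\|_\infty+\delta|W_t|$ together with the Chebyshev estimate above; and condition~(i), $\Cbi(\R^d)\subset D(\tilde A)$ with $\tilde Af=\tfrac12\delta^2\e[\xi_1^TD^2f\,\xi_1]+Df^T\psi$, is the G-It\^o formula together with $\e[W_1W_1^T]=\e[\xi_1\xi_1^T]$. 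Since $\tilde A=A_\delta$ on $\Cbi(\R^d)$, Theorem~\ref{thm:comp} forces $S_\delta=\tilde S$. The main obstacle is the consistency estimate~(v): the second-order Taylor expansion must be married to the sublinearity of $\e$ so that the order-$h_n^{1/2}$ first-order term is annihilated by $\e[\pm a^T\xi_1]=0$ rather than (fatally) ending up in the remainder; everything else is a routine moment estimate or a direct appeal to Theorems~\ref{thm:cher} and~\ref{thm:comp}.
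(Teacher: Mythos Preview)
Your proof is correct and follows essentially the same route as the paper: verify Assumption~\ref{ass:cher} for $I_n:=I_{n,\delta_n}$, use the third-order Taylor expansion together with $\e[\pm a^T\xi_1]=0$ to obtain the generator, appeal to Corollary~\ref{cor:cutoff2} for condition~(ii), and then identify the limit with the G-SDE semigroup via Theorem~\ref{thm:comp}. You are in fact slightly more explicit than the paper in two places---the cancellation of the $O(h_n^{1/2})$ term under the sublinear expectation, and the removal of the factor $e^{Lt}$ in part~(ii) by passing the $\|\cdot\|_\infty$-nonexpansiveness of $I_{n,\delta_n}^k$ to the limit (the paper simply invokes Theorem~\ref{thm:cher}, which formally only yields $e^{\omega t}\|f-g\|_\infty$ with $\omega=L$).
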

	\begin{proof}
		We verify Assumption~\ref{ass:cher} and the conditions from Corollary~\ref{cor:unique2}.\
		It is straightforward to show that the operators $I_{n,\delta_n}\colon\Cb(\Rd)\to\Cb(\Rd)$ 
		are well-defined and that the following statements are valid:
		\begin{itemize}
			\item $I_{n,\delta_n}$ is convex and monotone with $I_{n,\delta_n}0=0$ for all $n\in\N$, 
			\item $\|I_{n,\delta_n}f-I_{n,\delta_n}g\|_\infty\leq\|f-g\|_\infty$ for all $n\in\N$ and $f,g\in\Cb(\Rd)$, 
			\item $\|\tau_x I_{n,\delta_n}f-I_{n,\delta_n}(\tau_x f)\|_\infty\leq Lrh_n|x|$ for all $n\in\N$, 
			$f\in\Lipb(\Rd,r)$ and $x\in\Rd$, 
			\item $I_{n,\delta_n}\colon\Lipb(\Rd,r)\to\Lipb(\Rd,(1+Lh_n)r)$ for all $n\in\N$ and $r\geq 0$. 
		\end{itemize}
		Moreover, for every $n\in\N$, $f\in\Cbi(\Rd)$ and $x\in\Rd$, Taylor's formula yields
		\begin{align*}
			&f(x+h_n\psi(x)+\delta_n\sqrt{h_n}\xi_1) =f(x)+Df(x)^T(h_n\psi(x)+\delta_n\sqrt{h_n}\xi_1) \\
			&\qquad\qquad\; +\frac{1}{2}(h_n\psi(x)+\delta_n\sqrt{h_n}\xi_1)^T D^2f(x)(h_n\psi(x)+\sqrt{h_n}\delta_n\xi_1)
			+R_n(f, x,\xi_1),
		\end{align*}
		where the remainder term can be estimated by
		\[ |R_n(f,x,\xi_1)|\leq\frac{|h_n\psi(x)+\delta_n\sqrt{h_n}\xi_1|^3}{6}\|D^3 f\|_\infty. \]
		Since $\e[a^T\xi_1]=0$ for all $a\in\Rd$, we obtain 
		\begin{align*}
			\frac{(I_{n,\delta_n}f-f)(x)}{h_n}
			&=\e\left[\frac{f(x+h_n\psi(x)+\delta_n\sqrt{h_n}\xi_1)-f(x)}{h_n}\right] \\
			&=Df(x)^T\psi(x)+\frac{\delta_n^2}{2}\e\left[\xi^T D^2f(x)\xi\right]+(\tilde{R}_n(f,x,\xi_1),
		\end{align*}
		where the remainder term can be estimated by
		\[ |\tilde{R}_n(f,x,\xi_1)|\leq\frac{1}{2}h_n|\psi(x)^T D^2 f(x)\psi(x)|+\frac{\e[|R_n(f,x,\xi_1)|]}{h_n}. \]
		Since $\e[|\xi_1|^3]<\infty$ and $\|\psi\|_\infty<\infty$, we obtain
		\[ \frac{I_{n,\delta_n}f-f}{h_n}\to Df(\cdot)^T\psi+\frac{\delta^2}{2}\e\left[\xi^T D^2 f(\cdot)\xi\right]
		\quad\mbox{for all } f\in\Cbi(\Rd). \]
		The previous estimates and Corollary~\ref{cor:cutoff2} guarantee that Assumption \ref{ass:cher}(iii) 
        is satisfied.\ Furthermore, since the random variables $(\xi_n)_{n\in\N}$ are i.i.d., it follows from the independence argument outlined in~\cite[Appendix B]{BK22} that
		\[ (I_{n,\delta_n})^k f_n=\e\left[f\left(X^{n,\delta_n,x}_{kh_n}\right)\right] \]
		for all $k,n\in\N$, $f\in\Cb(\Rd)$ and $x\in\Rd$.\ Finally, it follows from the strong Markov
		property for stochastic differential equations driven by G-Brownian motions in~\cite{HJL21}
		that $(T_\delta(t)f)(x):=\e[f(X_t^{\delta,x})]$ defines a strongly continuous
		monotone semigroup on $\Cb(\Rd)$ satisfying the conditions~(a)--(c) from Theorem~\ref{thm:unique}.\
		Now, the claim follows from Theorem~\ref{thm:cher}, Theorem~\ref{thm:unique} and Corollary~\ref{cor:unique2}. 
	\end{proof}

    Subsequently, we focus on the convergence rate of the scheme by means of a large deviations approach, 
	see~\cite{Varadhan1984, DE, DZ2010, FK}.\ In doing so, we derive a large deviations result for stochastic
	differential equations in the spirit of Freidlin--Wentzell~\cite{FW}.\ 
    While the choice of the parameters $\delta_n$ and $h_n$ is still arbitrary,
    the sequence $(\alpha_n)_{n\in\N}\subset (0,\infty)$ will depend on $\delta_n$ and $h_n$.
For every $n\in\N$ and $f\in\Cb(\Rd)$, we define
	\[ J_n f:=\frac{1}{\alpha_n}\log\big(I_{n,\delta_n} e^{\alpha_n f}\big), \]
	 where $I_{n,\delta}$ is defined as before. Recall also the randomized Euler scheme $(X^{n,\delta,x}_{k h_n})_{k\in\mathbb{N}}$.

	\begin{theorem} \label{thm:LargeDev}
		Assume that there exists $\delta,\gamma\geq 0$ with $\delta_n\to\delta$ and
		$\alpha_n\delta_n^2\to\gamma$. Let $h_n\to 0$ and $\alpha_n h_n\to 0$. Assume in addition that $\xi_1$ is bounded.
		Then, there exists a strongly continuous convex monotone semigroup 
		$(T_{\gamma,\delta}(t))_{t\geq 0}$ on $\Cb(\Rd)$
        with generator $B_{\gamma,\delta}\colon D(B_{\gamma,\delta})\to\Cb(\Rd)$ such that
		\[ T_{\gamma,\delta}(t)f
		=\lim_{n\to\infty}J_n^{k_n^t}f
		=\lim_{n\to\infty}\frac{1}{\alpha_n}\log\left(\e\left[\exp\left(
		\alpha_n f\left(X^{n,\delta_n,\,\cdot}_{k_n^t h_n}\right)\right)\right]\right) \]
		for all $t\geq 0$, $f\in\Cb(\Rd)$ and sequences $(k_n^t)_{n\in\N}\subset\N$
		with $k_n^t h_n\to t$.\ Moreover, the following statements are valid:
		\begin{enumerate}
			\item[(i)] It holds $\Cbi(\Rd)\subset D(B_{\gamma,\delta})$ and 
			\[ (B_{\gamma,\delta}f)(x)
			=\frac{1}{2}\e\big[\delta^2\xi_1^T D^2 f(x)\xi_1+\gamma |Df(x)^T\xi_1|^2\big]+Df(x)^T\psi(x) \]
			for all $f\in\Cbi(\Rd)$ and $x\in\Rd$. 
			\item[(ii)] It holds $\|T_{\gamma,\delta}(t)f-T_{\gamma,\delta}(t)g\|_\infty\leq\|f-g\|_\infty$ 
			for all $t\geq 0$ and $ f,g\in\Cb(\Rd)$.
			\item[(iii)] For every $\epsilon>0$, $r,T\geq 0$ and $K\Subset\Rd$, there exist $c\geq 0$ 
			and $K'\Subset\Rd$ with
			\[ \|T_{\gamma,\delta}(t)f-T_{\gamma,\delta}(t)g\|_{\infty,K}\leq c\|f-g\|_{\infty,K'}+\epsilon \]
			for all $t\in [0,T]$ and $f,g\in B_{\Cb(\Rd)}(r)$. 
			\item[(iv)] For every $r, t\geq 0$, $f\in\Lipb(\R^d,r)$ and $x\in\Rd$,
			\[ \|T_{\gamma,\delta}(t)(\tau_x f)-\tau_x T_{\gamma,\delta}(t)f\|_\infty\leq Le^{Lt}rt|x|. \]
			Furthermore, it holds $T_{\gamma,\delta}(t)\colon\Lipb(r)\to\Lipb(e^{Lt}r)$ for all $r,t\geq 0$. 
		\end{enumerate}
	\end{theorem}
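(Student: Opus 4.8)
The plan is to apply Theorem~\ref{thm:cher} to the sequence $(J_n)_{n\in\N}$ with $\kappa\equiv 1$, $X_n:=\R^d$, step sizes $h_n$, and candidate generator
\[ (B_{\gamma,\delta}f)(x)=\tfrac12\,\e\bigl[\delta^2\xi_1^TD^2f(x)\xi_1+\gamma|Df(x)^T\xi_1|^2\bigr]+Df(x)^T\psi(x),\qquad f\in\Cbi(\R^d). \]
Granting Assumption~\ref{ass:cher}, Theorem~\ref{thm:cher} produces $(T_{\gamma,\delta}(t))_{t\geq 0}$ together with properties~(i), (iii), (iv) and the bound $\|T_{\gamma,\delta}(t)f-T_{\gamma,\delta}(t)g\|_\infty\leq e^{Lt}\|f-g\|_\infty$; the asserted non-expansiveness in~(ii) is then recovered by letting $n\to\infty$ in $\|J_n^{k_n^t}f-J_n^{k_n^t}g\|_\infty\leq\|f-g\|_\infty$. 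The exponential representation for $T_{\gamma,\delta}$ follows from the identity $J_n^kf=\tfrac1{\alpha_n}\log\bigl(I_n^ke^{\alpha_nf}\bigr)$ (induction on $k$) combined with $I_n^kg(x)=\e\bigl[g(X^{n,\delta_n,x}_{kh_n})\bigr]$, which holds by the i.i.d.\ property of $(\xi_n)_{n\in\N}$ exactly as in the proof of Theorem~\ref{thm:randomEuler}.

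For the structural part of Assumption~\ref{ass:cher}: each $J_n$ is monotone (composition of the monotone $I_n$ with the increasing maps $f\mapsto e^{\alpha_nf}$ and $u\mapsto\tfrac1{\alpha_n}\log u$) and convex (Hölder's inequality for the sublinear expectation $\e$, applied pointwise to $e^{\alpha_n(\lambda f_1+(1-\lambda)f_2)}=(e^{\alpha_nf_1})^\lambda(e^{\alpha_nf_2})^{1-\lambda}$), with $J_n0=0$ since $I_n1=1$. From $\|f-g\|_\infty=c$, i.e.\ $e^{-\alpha_nc}e^{\alpha_ng}\leq e^{\alpha_nf}\leq e^{\alpha_nc}e^{\alpha_ng}$, and positive homogeneity of $\e$ one gets $\|J_nf-J_ng\|_\infty\leq\|f-g\|_\infty$, so Assumption~\ref{ass:cher}(i) holds with $\omega:=L$ (indeed even with $\omega=0$). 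For~(iii), writing $J_n(\tau_xf)(y)$ and $\tau_xJ_nf(y)$ as $\tfrac1{\alpha_n}\log\e[e^{\alpha_nf(\,\cdot\,)}]$ with the arguments of $f$ differing only by $h_n(\psi(y)-\psi(y+x))$, of modulus at most $Lh_n|x|$, Lipschitz continuity of $f\in\Lipb(\R^d,r)$ gives $\|J_n(\tau_xf)-\tau_xJ_nf\|_\infty\leq Lrh_n|x|$, so~(iii) holds with $\delta_0=1$; combining this with the $\|\cdot\|_\infty$-non-expansiveness yields $J_nf\in\Lipb(\R^d,(1+Lh_n)r)\subset\Lipb(\R^d,e^{Lh_n}r)$, which is~(iv).

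The core is Assumption~\ref{ass:cher}(v) and, via Corollary~\ref{cor:cutoff2} and Remark~\ref{rem:cutoff}, also~(ii). Fix $f\in\Cbi(\R^d)$ and set $y:=h_n\psi(x)+\delta_n\sqrt{h_n}\,\xi_1$, so that $J_nf(x)-f(x)=\tfrac1{\alpha_n}\log\e[e^Z]$ with $Z:=\alpha_n\bigl(f(x+y)-f(x)\bigr)$. Taylor-expanding $f$ to second order (cubic remainder controlled by $\tfrac{|y|^3}{6}\|D^3f\|_\infty$), expanding $e^Z$, and using $\log(1+u)=u+O(u^2)$, the computation of $\tfrac1{\alpha_nh_n}\log\e[e^Z]$ reduces to that of $\tfrac1{\alpha_nh_n}\e[e^Z-1]$. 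Writing $Z=a+w$ with $a:=\alpha_n\delta_n\sqrt{h_n}\,Df(x)^T\xi_1$ the leading random term (of order $(\alpha_nh_n)^{1/2}$), one discards $a$ by the sublinear-expectation identity $\e[X]=\e[-X]=0\Rightarrow\e[X+Y]=\e[Y]$ (available since $\e[\pm Df(x)^T\xi_1]=0$), after which the contributions surviving division by $\alpha_nh_n$ are $Df(x)^T\psi(x)$ (drift), $\tfrac{\delta_n^2}{2}\xi_1^TD^2f(x)\xi_1$ (from $\tfrac12y^TD^2f(x)y$) and $\tfrac{\alpha_n\delta_n^2}{2}|Df(x)^T\xi_1|^2$ (from $\tfrac12(Df(x)^Ty)^2$ in the expansion of $e^Z$); all remaining terms are $o(\alpha_nh_n)$ in $L^1(\e)$ uniformly in $x$, using $\|D^if\|_\infty<\infty$, $\e[|\xi_1|^3]<\infty$, $\alpha_nh_n\to0$, $\alpha_n\delta_n^2\to\gamma$ and $\delta_n\to\delta$. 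Since $\tfrac{\alpha_n\delta_n^2}{2}\to\tfrac\gamma2$ and $\tfrac{\delta_n^2}{2}\to\tfrac{\delta^2}{2}$, continuity of $\e$ gives $\tfrac{J_nf-f}{h_n}\to B_{\gamma,\delta}f$ uniformly on $\R^d$, and the same estimates bound $\bigl\|\tfrac{J_nf-f}{h_n}\bigr\|_\infty$ by $\rho\bigl(\sum_{i=1}^3\|D^if\|_\infty\bigr)$ for some non-decreasing $\rho$ with $\rho(\epsilon)\to0$ as $\epsilon\to0$; applied to $f=r(1-\zeta)$ with $\zeta\in\Cci(\R^d)$, Corollary~\ref{cor:cutoff2} and Remark~\ref{rem:cutoff} yield~(ii) and $J_n(\Cb(\R^d))\subset\Cb(\R^d)$, so Theorem~\ref{thm:cher} applies.

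The main obstacle is exactly this generator computation: the bookkeeping inside the exponential Taylor expansion that produces the extra large-deviation term $\tfrac\gamma2|Df(x)^T\xi_1|^2$, performed with the sublinearity of $\e$ in mind (one cannot take expectations term by term), i.e.\ correctly discarding the $O((\alpha_nh_n)^{1/2})$ mean-zero term $a$ and showing every remaining remainder is $o(\alpha_nh_n)$ in $L^1(\e)$ uniformly in $x$, and doing so with a bound of the form $\rho\bigl(\sum_i\|D^if\|_\infty\bigr)$ so that Corollary~\ref{cor:cutoff2} supplies the mixed-topology equicontinuity in Assumption~\ref{ass:cher}(ii). Everything else is routine.
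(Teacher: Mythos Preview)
Your proposal is correct and follows essentially the same approach as the paper: verify Assumption~\ref{ass:cher} for $(J_n)_{n\in\N}$ (structural properties, the Taylor expansion yielding the generator with the extra $\tfrac{\gamma}{2}|Df(x)^T\xi_1|^2$ term, and Corollary~\ref{cor:cutoff2} for the mixed-topology equicontinuity), then invoke Theorem~\ref{thm:cher}. The only notable deviation is your convexity argument via H\"older's inequality for the sublinear expectation, whereas the paper instead shows $J_n(f+c)=J_nf+c$ and the quasi-convexity $J_n(\lambda f+(1-\lambda)g)\leq\max\{J_nf,J_ng\}$, and then combines these to deduce convexity; both are valid and equally short.
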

	\begin{proof}
		By relying on the estimates in the proof of Theorem~\ref{thm:randomEuler}, 
		it is straightforward to show that the operators $J_n\colon\Cb(\Rd)\to\Cb(\Rd)$ 
		are well-defined and satisfy
		\begin{itemize}
			\item $J_n$ is convex and monotone with $J_n0=0$ for all $n\in\N$, 
			\item $\|J_n f-J_n g\|_\infty\leq\|f-g\|_\infty$ for all $n\in\N$ and $f,g\in\Cb(\Rd)$, 
			\item $\|\tau_x J_n f-J_n(\tau_x f)\|_\infty\leq Lrh_n|x|$ for all $n\in\N$, $f\in\Lipb(\Rd,r)$ 
			and $x\in\Rd$, 
			\item $J_n\colon\Lipb(\Rd,r)\to\Lipb(\Rd,(1+Lh_n)r)$ for all $n\in\N$ and $r\geq 0$. 
		\end{itemize}
		Indeed, one can show that $J_n(f+c)=J_n f+c$ and $J_n(\lambda f+(1-\lambda)g)\leq\max\{J_n f, J_n g\}$ 
		for all $n\in\N$, $f,g\in\Cb(\Rd)$, $c\in\R$ and $\lambda\in [0,1]$. Hence, 
		\begin{align*}
			&\big(J_n(\lambda f+(1-\lambda)g)\big)(x)-\lambda (J_n f)(x)-(1-\lambda)(J_n g)(x) \\
			&=\Big(J_n\big(\lambda(f-(J_n f)(x))+(1-\lambda)(g-(J_n g)(x))\big)\Big)(x) \\
			&\leq\max\big\{\big(J_n(f-(J_n f)(x))\big)(x), \big(J_n(g-(J_n g)(x))\big)(x)\big\}=0
		\end{align*}
		for all $n\in\N$, $f,g\in\Cb(\Rd)$ and $\lambda\in [0,1]$ which shows that $J_n$
		is convex.\ Moreover, it holds $J_n f\leq J_n\big(g+\|f-g\|_\infty\big)=J_n g+\|f-g\|_\infty$
		and reversing the roles of $f$ and~$g$ yields $\|J_n f-J_n g\|_\infty\leq\|f-g\|_\infty$. 
		The other properties can be derived similarly. 
		
		For every $n\in\N$, $f\in\Cbi(\Rd)$ and $x\in\Rd$, Taylor's formula implies
		\begin{align*}
			&\exp\left(\alpha_n\big(f\big(x+h_n\psi(x)+\delta_n\sqrt{h_n}\xi\big)-f(x)\big)\right) \\
			&=1+\alpha_n Df(x)^T (h_n\psi(x)+\delta_n\sqrt{h_n}\xi_1)
			+\frac{\alpha_n^2}{2}|Df(x)^T(h_n\psi(x)+\delta_n\sqrt{h_n}\xi_1)|^2 \\
			&\quad\; +\frac{\alpha_n}{2}(h_n\psi(x)+\delta_n\sqrt{h_n}\xi_1)^T D^2f(x)(h_n\psi(x)+\delta_n\sqrt{h_n}\xi_1)
			+R_n(f,x,\xi_1),
		\end{align*} 
		where the remainder term can be estimated by
		\[ |R_n(f,x,\xi_1)|\leq\frac{\alpha_n|h_n\psi(x)+\delta\sqrt{h_n}\xi_1|^3}{6}
		\rho\bigg(\sum_{i=1}^3\|D^i f\|_\infty\bigg) \]
		for a suitable function $\rho\colon\R_+\to\R_+$ which satisfies $\rho(\epsilon)\to 0$ 
		as $\epsilon\to 0$.\ We use the fact that $\e[a^T\xi_1]=0$ for all $a\in\Rd$ to conclude
		\begin{align*}
			&\e\left[\exp\left(\alpha_n\big(f\big(x+h_n\psi(x)+\delta_n\sqrt{h_n}\xi_1\big)-f(x)\big)\right)\right] \\
			&=1+\alpha_n h_n Df(x)^T\psi(x)+\frac{\alpha_n^2 h_n^2}{2}|Df(x)^T\psi(x)|^2
			+\frac{\alpha_n h_n^2}{2}\psi(x)^T D^2f(x)\psi(x) \\
			&\quad\; +\frac{\alpha_n h_n}{2}\e\big[\alpha_n\delta_n^2|Df(x)^T\xi_1|^2
			+\delta_n^2\xi_1^T D^2 f(x)\xi_1\big]+\tilde{R}_n(f,x,\xi_1),
		\end{align*}
		where the remainder term can be estimated by $|\tilde{R_n}(f,x,\xi_1)|\leq\e[|R_n(f,x,\xi_1)|]$.\
		Moreover, by assumption, it holds $h_n\to 0$ and $\alpha_n h_n\to 0$.
		Hence, Taylor's formula implies
		\begin{align*}
			\frac{J_n f-f}{h_n}
			&=\frac{1}{\alpha_n h_n}\log\left(
			\e\left[\exp\left(\alpha_n\big(f\big(x+h_n\psi(x)+\delta_n\sqrt{h_n}\xi_1\big)-f(x)\big)\right)\right]\right) \\
			&=Df(x)^T\psi(x)+\frac{\alpha_n h_n}{2}|Df(x)^T\psi(x)|^2+\frac{h_n}{2}\psi(x)^T D^2 f(x)\psi(x) \\
			&\quad\; +\frac{1}{2}\e\big[\alpha_n\delta_n^2 |Df(x)^T\xi_1|^2+\delta_n^2\xi_1^T D^2f(x)\xi_1\big]
			+\hat{R}_n(f,x,\xi_1),
		\end{align*}
		where the remainder term can be estimated by $ |\hat{R}_n(f,x,\xi_1)|\leq\hat{\rho}\big(\sum_{i=1}^3\|D^i f\|_\infty\big)r_n $
		for a suitable function $\hat{\rho}\colon\R_+\to\R_+$ with $\hat{\rho}(\epsilon)\to 0$ 
		as $\epsilon\to 0$ and a sequence $(r_n)_{n\in\N}\subset\R_+$ with $r_n\to 0$. 
		Since $\delta_n\to\delta$, $\alpha_n\delta_n^2\to\gamma$, $h_n\to 0$ and $\alpha_n h_n\to 0$,
		we obtain
		\[ \lim_{n\to\infty}\left\|\frac{J_n f-f}{h_n}
		-\frac{1}{2}\e\big[\delta^2\xi_1^T D^2 f(\cdot)\xi_1+\gamma |Df(\cdot)^T\xi_1|^2\big]
		-Df(\cdot)^T\psi(\cdot)\right\|_\infty=0 \]
		for all $f\in\Cbi(\Rd)$.\ Moreover, the previous estimates and Corollary~\ref{cor:cutoff2}
		imply that Assumption~\ref{ass:cher}(iii) is valid. Since the random variables 
		$(\xi_n)_{n\in\N}$ are i.i.d., we obtain
		\[ (J_n^k f)(x)=\frac{1}{\alpha_n}\log\left(\e\left[\exp\left(
		\alpha_n f\left(X^{n,\delta_n,x}_{kh_n}\right)\right)\right]\right) \]
		for all $k,n\in\N$, $f\in\Cb(\Rd)$ and $x\in\Rd$.\ Now, the claim follows from
		Theorem~\ref{thm:cher} and Corollary~\ref{cor:unique2}.
	\end{proof}

	If the limit is a Hamilton--Jacobi semigroup, we obtain a Laplace principle.

	\begin{corollary} \label{cor:LargeDev}
		Let $(\delta_n)_{n\in\N}\subset (0,\infty)$ be a sequence with $\delta_n\to 0$
		and $\delta_n^2 n\to\infty$. Then, for every $f\in\Cb(\Rd)$,
		\[ \delta_n^2\log\e\left[\exp\left(\frac{1}{\delta_n^2}
		f\left(\frac{\delta_n}{\sqrt{n}}\sum_{i=1}^n \xi_i\right)\right)\right]
		\to\sup_{y\in\Rd}\big(f(y)-\varphi(y)\big) \]
	  with rate function $\varphi\colon\Rd\to [0,\infty]$  given by
		$\varphi(y):=\sup_{z\in\Rd}\big(y^Tz-\tfrac{1}{2}\e\big[|z^T\xi_1|^2\big]\big)$.
	\end{corollary}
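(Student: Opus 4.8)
The plan is to read off the left-hand side as the value $\big(T_{1,0}(1)f\big)(0)$ of the semigroup produced by Theorem~\ref{thm:LargeDev}, and then to identify $(T_{1,0}(t))_{t\geq 0}$ with the Hopf--Lax (Hamilton--Jacobi) semigroup associated to $\varphi$ by invoking the comparison principle, Theorem~\ref{thm:comp}. For the first step, apply Theorem~\ref{thm:LargeDev} with $\psi\equiv 0$, $h_n:=1/n$ and $\alpha_n:=1/\delta_n^2$, retaining the given sequence $(\xi_n)_{n\in\N}$ and noise sequence $(\delta_n)_{n\in\N}$; its hypotheses hold with $\delta:=0$ and $\gamma:=1$ since $\delta_n\to 0$, $\alpha_n\delta_n^2\equiv 1\to 1$, $h_n\to 0$ and $\alpha_n h_n=\tfrac{1}{\delta_n^2 n}\to 0$. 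Because $\psi\equiv 0$, the recursion collapses to $X^{n,\delta_n,0}_{kh_n}=\tfrac{\delta_n}{\sqrt n}\sum_{i=1}^k\xi_i$, so the choice $k_n^1:=n$ (which satisfies $k_n^1h_n=1$) gives, via the representation in Theorem~\ref{thm:LargeDev},
\[ \delta_n^2\log\e\!\left[\exp\!\left(\tfrac{1}{\delta_n^2}\,f\!\left(\tfrac{\delta_n}{\sqrt n}\sum_{i=1}^n\xi_i\right)\right)\right]\ \longrightarrow\ \big(T_{1,0}(1)f\big)(0), \]
where $(T_{1,0}(t))_{t\geq 0}$ is a strongly continuous convex monotone semigroup on $\Cb(\R^d)$ with $T_{1,0}(t)0=0$, a $\|\cdot\|_\infty$-contraction that commutes with translations and preserves $\Lipb(\R^d,r)$ for every $r$, and with $\Cbi(\R^d)\subset D(B_{1,0})$, $B_{1,0}f=H(Df)$ for $f\in\Cbi(\R^d)$, where $H(p):=\tfrac12\,\e[\,|p^T\xi_1|^2\,]$.

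The second step is to recognise that $\varphi=H^*$ is the convex conjugate of $H$. Indeed $H$ is finite (because $\e[|\xi_1|^3]<\infty$ forces $\e[|\xi_1|^2]<\infty$, so $H(p)\le C|p|^2$ with $C:=\tfrac12\e[|\xi_1|^2]$), continuous, and convex (from the sublinearity of $\e$ and the convexity of $t\mapsto t^2$), hence a closed proper convex function; Fenchel--Moreau then gives $\varphi^*=(H^*)^*=H$, and $H(p)\le C|p|^2$ yields the coercivity $\varphi(y)\ge |y|^2/(4C)$. Define
\[ \big(S(t)f\big)(x):=\sup_{y\in\R^d}\Big(f(y)-t\,\varphi\big(\tfrac{y-x}{t}\big)\Big)\quad(t>0),\qquad S(0):=\id. \]
Using $\varphi\ge 0$, $\varphi(0)=0$, the boundedness of $f$ and the superlinear growth of $\varphi$, one verifies by standard Hopf--Lax arguments that $S(t)f\in\Cb(\R^d)$ with $\|S(t)f\|_\infty\le\|f\|_\infty$, that each $S(t)$ is convex and monotone with $S(t)0=0$, that $S(t)(\tau_xf)=\tau_xS(t)f$ and $S(t)\colon\Lipb(\R^d,r)\to\Lipb(\R^d,r)$, that $S(s+t)=S(s)S(t)$, and that $t\mapsto S(t)f$ is strongly continuous with $S(t)f\to f$ as $t\downarrow 0$. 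Finally, substituting $v:=(y-x)/h$ and Taylor expanding, for $f\in\Cbi(\R^d)$ the coercivity of $\varphi$ confines the supremum to a fixed ball, and one obtains $\tfrac{1}{h}\big(S(h)f-f\big)\to\sup_v\big(\langle Df,v\rangle-\varphi(v)\big)=\varphi^*(Df)=H(Df)$ uniformly; hence $\Cbi(\R^d)\subset D(A)$ with $Af=H(Df)$, $A$ being the generator of $(S(t))_{t\geq 0}$.

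Both $(T_{1,0}(t))_{t\geq 0}$ and $(S(t))_{t\geq 0}$ are thus strongly continuous convex monotone semigroups on $\Cb(\R^d)=\Ck(\R^d)$ (with $\kappa\equiv 1$) that vanish at $0$, whose generators coincide on $\Cbi(\R^d)$, which are $\|\cdot\|_\infty$-contractions (so that conditions~(ii) and~(iii) of Theorem~\ref{thm:comp} hold with $c=1$, $K'=K$ and $\epsilon=0$), which commute with translations (condition~(iv)), and which map $\Lipb(\R^d)$ into itself (condition~(v)). Theorem~\ref{thm:comp} therefore yields $T_{1,0}(t)=S(t)$ for all $t\ge 0$, and evaluating at $t=1$, $x=0$ gives $\big(T_{1,0}(1)f\big)(0)=\big(S(1)f\big)(0)=\sup_{y\in\R^d}\big(f(y)-\varphi(y)\big)$; together with the first step this is the claim. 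The main obstacle is the second step: constructing the Hopf--Lax semigroup and, above all, checking that its generator on $\Cbi(\R^d)$ is exactly $f\mapsto H(Df)$ with $H=\varphi^*$ --- the delicate points being the strong continuity at $t=0$ and the uniform convergence of the difference quotients, both resting on the superlinear growth of $\varphi$ --- so that the comparison principle of Theorem~\ref{thm:comp} applies and pins down $(T_{1,0}(t))_{t\geq0}$.
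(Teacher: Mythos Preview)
Your approach is essentially the paper's own: apply Theorem~\ref{thm:LargeDev} with $\psi\equiv 0$, $h_n=1/n$, $\alpha_n=1/\delta_n^2$ to express the left-hand side as $(T_{1,0}(1)f)(0)$, and then identify $(T_{1,0}(t))_{t\geq 0}$ with the Hopf--Lax semigroup via the comparison principle. The paper defers the second step to~\cite[Theorem~3.4]{BK22}, while you spell out the construction of the Hopf--Lax semigroup and invoke Theorem~\ref{thm:comp} directly; this is the same idea.

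There is, however, one genuine gap in your verification of the hypotheses of Theorem~\ref{thm:comp}. You claim that the $\|\cdot\|_\infty$-contraction property alone yields condition~(iii) ``with $c=1$, $K'=K$ and $\epsilon=0$''. It does not: the contraction gives $\|S(t)f-S(t)g\|_{\infty,K}\leq\|f-g\|_\infty$, and the global supremum on the right cannot be controlled by $\|f-g\|_{\infty,K'}$ for any compact $K'$. Condition~(iii) is a localization statement (equicontinuity in the mixed topology) and is strictly stronger than norm contractivity. The fix is short and uses ingredients you already have. For $T_{1,0}$, condition~(iii) is furnished directly by Theorem~\ref{thm:LargeDev}(iii). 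For the Hopf--Lax semigroup, use the coercivity $\varphi(y)\geq |y|^2/(4C)$ you derived: for $f,g\in B_{\Cb(\R^d)}(r)$, $t\in[0,T]$ and $x\in K$, the supremum defining $(S(t)f)(x)$ may be restricted to those $y$ with $t\varphi((y-x)/t)\leq 2r$, hence $|y-x|\leq\sqrt{8CrT}$; taking $K':=K+B_{\R^d}(\sqrt{8CrT})$ then gives $\|S(t)f-S(t)g\|_{\infty,K}\leq\|f-g\|_{\infty,K'}$. With this correction, your argument goes through.
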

	\begin{proof}
		Applying Theorem~\ref{thm:LargeDev} with $h_n:=\frac{1}{n}$, $\alpha_n:=\nicefrac{1}{\delta_n^2}$,
		and $\psi\equiv 0$ yields 
		\begin{align*}
			T_{1,0}(1)f &=\lim_{n\to\infty}\delta_n^2\log\e\bigg[\exp\bigg(\frac{1}{\delta_n^2}
			f\Big(\,\cdot\,+\frac{\delta_n}{\sqrt{n}}\sum_{i=1}^n \xi_i\Big)\bigg)\bigg] 
			\quad\mbox{for all } f\in\Cb(\Rd), \\
			B_{1,0}f &=\frac{1}{2}\e[|Df(x)^T\xi_1|^2] \quad\mbox{for all } f\in\Cbi(\Rd). 
		\end{align*}
		Similar to the proof of~\cite[Theorem~3.4]{BK22}, one can verify the Hopf--Lax formula
		\[ (T_{1,0}(t)f)(x)=\sup_{y\in\Rd}\big(f(x+ty)-\varphi(y)t\big) 
		  \quad\mbox{for all } t\geq 0,\,  f\in\Cb(\Rd) \mbox{ and }x\in \Rd. \] 
	\end{proof}

	We also obtain a logarithmic small-noise limit in the spirit of the Freidlin--Wentzell large 
	deviations principle~\cite{FW} characterizing the convergence rate of the 
	solution $X^{\delta,x}_t$ of the SDE~\eqref{eq:SDE} driven by a standard Brownian
	motion to the solution $X^{0,x}_t=u(t,x)$ of the ODE~\eqref{eq:ODE}.\ In the sublinear 
	case, where the solution $X^{\delta,x}_t$ of the SDE~\eqref{eq:SDE} is driven by a 
	G-Brownian motion, we refer to~\cite{CX,GJ} for large deviations principles.

	\begin{corollary}
		Let $(\delta_n)_{n\in\N}\subset (0,\infty)$ be a sequence with $\delta_n\to 0$.
		Then, for every $t \ge 0$, $x\in \Rd$ and $f\in\Cb(\Rd)$,
		\[ \delta^2_n\log\bigg(\e\bigg[\exp\Big(\frac{1}{\delta_n^2}f\big(X^{\delta_n,x}_t\big)\Big)\bigg]\bigg)
		\to (T_{1,0}(t)f)(x), \]
		where $(T_{1,0}(t))_{t\ge 0}$ is the semigroup from Theorem~\ref{thm:LargeDev} for $\gamma=1$ and $\delta=0$. 
	\end{corollary}
	\begin{proof}
		For every $n\in\N$, let $(S_{\delta_n}(t))_{t\geq 0}$ be the semigroup 
		from Theorem~\ref{thm:randomEuler}.\ Fix $n\in\N$ and choose 
		$\alpha_k:=\frac{1}{\delta_n^2}$ for all $k\in\N$ and 
		$(h_k)_{k\in\N}\subset (0,\infty)$ with $h_k\to0$. Define
		\[ (I_{k,\delta_n}f)(x):=\e\left[f\left(x+h_k\psi(x)+\delta_n\sqrt{h_k}\xi_1\right)\right]
		\quad\mbox{and}\quad
		J_{k,\delta_n} f:=\frac{1}{\alpha_k}\log\big(I_{k,\delta_n}e^{\alpha_k f}\big) \]
		for all $k,n\in\N$, $f\in\Cb(\Rd)$ and $x\in\Rd$. Theorem~\ref{thm:randomEuler}
		and Theorem~\ref{thm:LargeDev} imply
		\begin{align*}
			&\delta_n^2\log\left(\e\left[\exp\left(\frac{1}{\delta_n^2}f\left(X^{\delta_n,\,\cdot}_t\right)\right)\right]\right)
			=\delta_n^2\log\left(S_{\delta_n}(t)e^{\nicefrac{f}{\delta_n^2}}\right) \\
			&=\lim_{k\to\infty}\delta_n^2\log\left((I_{k,\delta_n})^{m_k^t}e^{\nicefrac{f}{\delta_n^2}}\right) =\lim_{k\to\infty}(J_{k,\delta_n})^{m_k^t}f=T_{1,\delta_n}(t)f
		\end{align*}
		for all $t\geq 0$, $f\in\Cb(\Rd)$ and $(m_k^t)_{k\in\N}\subset\N$ with
		$m_k^t h_k\to t$.\ Moreover, Theorem~\ref{thm:LargeDev} and Corollary~\ref{cor:cutoff4} 
		yield that Assumption~\ref{ass:Sn2} and the conditions from Corollary~\ref{cor:unique2}
        are valid.\
        Indeed, it follows from the proof of Theorem~\ref{thm:LargeDev} and~\cite[Theorem~4.3]{BK22+} that
		\[ \lim_{h\downarrow 0}\left\|\frac{T_{1,0}(h)f-f}{h}-B_{1,0}f\right\|_\infty=0 
		\quad\mbox{for all } f\in\Cbi(\Rd). \]
		Hence, the claim follows from Theorem~\ref{thm:Sn2}, Corollary~\ref{cor:unique1} and Corollary~\ref{cor:LargeDev}.
	\end{proof}

	\subsection{Discretization of stochastic optimal control problems}
	\label{sec:control}
	
	{We fix a Borel measurable function $\varphi\colon\S^d_+\times \Rd\times \R^{d\times d}\to [0,\infty]$
	and consider the value function of a dynamic stochastic control problem with finite
	time horizon given by
	\begin{align*} 
		(T(t)f)(x):=\sup_{(a,b,c)\in\A} \bigg(\E\bigg[f\bigg(x+\int_0^t\sqrt{a_s}\,\d W_s+\int_0^t b_s\,\d s&+{\int_0^t c_s\, \d J_s}\bigg)\bigg]\\
        &
		-\E\bigg[\int_0^t \varphi(a_s,b_s,c_s)\,\d s\bigg]\bigg),
	\end{align*}
	where $(W_t)_{t\geq 0}$ is a $d$-dimensional Brownian motion on a complete 
	filtered probability space $(\Omega,\F,(\F_t)_{t\geq 0},\P)$ satisfying the usual conditions, $(J_t)_{t\geq0}$ is an adapted compound Poisson process with jump measure $\mu$ and intensity $\lambda>0$
	and $\A$ consists of all predictable processes $(a,b,c)\colon\Omega\times\R_+\to\S^d_+\times\Rd\times \R^{d\times d}$ 
	with 
    \[ \E\left[\int_0^t |a_s|+|b_s|+|c_s|\,\d s\right]<\infty \quad\text{for all } t\geq 0. \] 
	The aim of this subsection is to approximate $T(t)f$ by iterating a sequence 
	of discrete static control problems given by
	\[ (I_n f)(x):=\sup_{(a,b,c)\in\A_n}
	\bigg(\E\bigg[f\bigg(x+\sqrt{ah_n}\xi_n+bh_n+c\sum_{k=1}^{N_n} Z_n^k\bigg)\bigg]-\varphi_n(a,b,c)h_n\bigg), \]
	where $\A_n\subset\S^d_+\times\Rd\times \R^{d\times d}$, $x\in X_n\subset\Rd$ and 
	$\varphi_n\colon\A_n\to [0,\infty]$. We will show that
	\[ T(t)f=\lim_{n\to\infty}I_n^{k_n^t}f \quad\mbox{for all } (f,t)\in\Cb(\Rd)\times\R_+. \]
	
	Subsequently, we formalize the definition of the operators $(I_n)_{n\in\N}$ and 
	impose sufficient conditions to guarantee the convergence.\ Let $(h_n)_{n\in\N}\subset (0,\infty)$ 
	be a sequence with $h_n\to 0$ and define $\T_n:=\{kh_n\colon k\in\N_0\}$ for all $n\in\N$.\
	Furthermore, let $(X_n)_{n\in\N}$ be a sequence of closed sets $X_n\subset\Rd$ with
	$x+y\in X_n$ for all $x,y\in X_n$ such that, for every $x\in\Rd$, there exists
	$x_n\in X_n$ with $x_n\to x$.\ For every $n\in \N$, we choose independent random vectors $\xi_n\colon\Omega\to\Rd$ 
	with $\E[\xi_n]=0$, $\E[\xi_n\xi_n^T]=I_d\in\R^{d\times d}$ and $\sup_{n\in\N}\E[|\xi_n|^3]<\infty$, Poisson distributed $N_n$ with intensity $\lambda_n h_n$ and an i.i.d.\ sequence $(Z_n^k)_{k\in \N}$ with $Z_n^k\sim \mu_n$ for all $k\in \N$.\ Moreover, let $\A_n\subset\S^d_+\times\Rd\times \R^{d\times d}$ and $x+\sqrt{ah_n}\xi_n(\omega)+bh_n+ c\sum_{k=1}^{N_n(\omega)} Z_n^k(\omega)\in X_n$ for all $n\in\N$, $x\in X_n$, $(a,b,c)\in\A_n$ 
    and $\omega\in\Omega$.\ Finally, let $(\varphi_n)_{n\in\N}$ be a sequence of functions 
    $\varphi_n\colon\A_n\to [0,\infty]$ and 
    \[
    \big(A^{a,b,c}f\big)(x):=\frac{1}{2}\tr\big(aD^2f(x)\big)+b^TD f(x)+\lambda\int_{\R^d} f(x+cz)-f(x)\, \mu(\d z)
    \]
    for all $f\in \Cbi(\R^d)$ and $x\in \R^d$.
	
	\begin{assumption} \label{ass:control}
		Suppose that the following conditions are satisfied:
		\begin{enumerate}
			\item[(i)] There exists $(a^*,b^*,c^*)\in\S^d_+\times\Rd\times \R^{d\times d}$ with $\varphi(a^*,b^*,c^*)=0$. 
			\item[(ii)] It holds $\lim_{|a|+|b|+|c|\to\infty}\frac{\varphi(a,b,c)}{|a|+|b|}=\infty$.
			\item[(iii)] For every $n\in\N$, there exists $(a_n^*,b_n^*,c_n^*)\in\A_n$ with $\varphi_n(a_n^*,b_n^*,c_n^*)=0$.\
			Furthermore, it holds $\sup_{n\in\N}(|a_n^*|+|b_n^*|+|c_n^*|)<\infty$. 
			\item[(iv)] It holds $\lim_{|a|+|b|+|c|\to\infty}\inf_{n\in\N}\frac{\varphi_n(a,b,c)}{|a|+|b|}=\infty$. 
			\item[(v)] For every $f\in \Cbi(\R^d)$, it holds uniformly w.r.t. $x\in \R^d$ that
            \begin{align*}
             \lim_{n\to \infty}\sup_{(a,b,c)\in\A_n}&
        \Big(\big(A^{a,b,c}f\big)(x)-\varphi_n(a,b,c)\Big)\\
        &\qquad=\sup_{(a,b,c)\in\S^d_+\times\Rd\times \R^{d\times d}}
        \Big(\big(A^{a,b,c}f\big)(x)-\varphi(a,b,c)\Big).            \end{align*}
            
            \item[(vi)] It holds 
            $0\leq \lambda_n\to \lambda\in (0,\infty)$ and $\mu_n\to\mu$ in distribution.
		\end{enumerate}
	\end{assumption}

	\begin{theorem} \label{thm:control}
		Suppose that Assumption~\ref{ass:control} is satisfied.\ Then, there exists a strongly continuous 
		convex monotone semigroup $(S(t))_{t\geq 0}$ on $\Cb(\Rd)$ with generator $A\colon D(A)\to\Cb(\Rd)$ such that
		\[ S(t)f=\lim_{n\to\infty}I_n^{k_n^t}f \quad\mbox{for all } (f,t)\in\Cb(\Rd)\times\R_+,\]
		where $(k_n^t)_{n\in\N}\subset\N$ is arbitrary with $k_n^t h_n\to t$.
		It holds $\Cbi(\Rd)\subset D(A) $ and 
		\[ Af=\sup_{(a,b,c)\in\S^d_+\times\Rd\times \R^{d\times d}}\big(A^{a,b,c}f-\varphi(a,b,c)\big) \quad \text{for all }f\in \Cbi(\Rd).\]
        Furthermore, it holds
		$S(t)f=T(t)f$ for all $t\geq 0$ and $f\in\Cb(\Rd)$. 
	\end{theorem}
	\begin{proof}
		We verify Assumption~\ref{ass:cher} and the conditions from Corollary~\ref{cor:unique2}.\
		It is straightforward to show that the operators $I_n\colon\Cb(X_n)\to\Cb(X_n)$ 
		are well-defined and 
		\begin{itemize}
			\item $I_n$ is convex and monotone with $I_n0=0$ for all $n\in\N$, 
			\item $\|I_n f-I_n g\|_{\infty, X_n}\leq\|f-g\|_{\infty,X_n}$ for all $n\in\N$ and $f,g\in\Cb(X_n)$, 
			\item $\tau_x I_n f=I_n(\tau_x f)$ for all $n\in\N$, $f\in\Cb(X_n)$ and $x\in X_n$,  
			\item $I_n\colon\Lipb(X_n,r)\to\Lipb(X_n,r)$ for all $n\in\N$ and $r\geq 0$. 
		\end{itemize}
		Let $f\in\Cbi(\Rd)$ and $J_n:=\sum_{k=1}^{N_n}Z_n^k$. For every $n\in\N$ and $x\in X_n$, 
		\begin{align*}
			&f(x+cJ_n+\sqrt{ah_n}\xi_n+bh_n)-f(x+cJ_n) \\
            &=\int_0^1 Df(x+cJ_n+ t(\sqrt{ah_n}\xi_n+bh_n))^T(\sqrt{ah_n}\xi_n+bh_n)\,\d t \\
			&=\int_0^1 Df(x+cJ_n+t(\sqrt{ah_n}\xi_n+bh_n))bh_n\,\d t\\
            &\quad\; +\int_0^1 Df(x+cJ_n+tbh_n))^T\sqrt{ah_n}\xi_n\,\d t \\
			&\quad\; +\int_0^1\int_0^1 \sqrt{ah_n}\xi_n^T D^2 f(x+cJ_n+st\sqrt{ah_n}\xi_n+tbh_n)\sqrt{ah_n}\xi_n t\,\d s\,\d t.
		\end{align*}
        We use $\E[\xi_n]=0$ and $\E[|\xi_n|^2]=d$ 
        to obtain
		\[ \frac{(I_n^{a,b,c}f-f)(x)}{h_n}-\varphi_n(a,b,c)
		\leq\frac{1}{2}d|a|\|D^2 f\|_\infty+|b|\|Df\|_\infty+2\lambda_n\|f\|_\infty-\varphi_n(a,b,c) \]
		for all $n\in\N$, $(a,b,c)\in\A_n$ and $x\in X_n$, where $$(I_n^{a,b,c} f)(x):=\E\big[f\big(x+cJ_n+\sqrt{ah_n}\xi_n+bh_n\big)\big].$$
		Hence, due to Assumption~\ref{ass:control}(iv), there exists $r\geq 0$ with
		\begin{equation} \label{eq:control.bound1}
			I_nf=\sup_{(a,b,c)\in\A_n^r}\big(I_n^{a,b,c}f-\varphi_n(a,b,c)h_n\big) \quad\mbox{for all } n\in\N,
		\end{equation}
		where $\A_n^r:=\{(a,b,c)\in\A_n\colon |a|+|b|+|c|\leq r\}$. By increasing $r\geq 0$,
		we can assume 
		\begin{equation} \label{eq:control.bound2.1}
			\sup_{(a,b,c)\in\S_+^d\times \Rd\times \R^{d\times d}}\Big(\big(A^{a,b,c}f\big)(x)-\varphi(a,b,c)\Big)=\sup_{|a|+|b|+|c|\leq r}\Big(\big(A^{a,b,c}f\big)(x)-\varphi(a,b,c)\Big)	
		\end{equation}
        and
        \begin{equation}\label{eq:control.bound2}
			\sup_{(a,b,c)\in \A_n}\Big(\big(A^{a,b,c}f\big)(x)-\varphi_n(a,b,c)\Big)	=\sup_{(a,b,c)\in\A_n^r}\Big(\big(A^{a,b,c}f\big)(x)-\varphi_n(a,b,c)\Big)
		\end{equation}
		for all $n\in\N$ and $x\in X_n$.\ For every $n\in\N$, $(a,b,c)\in\A_n^r$ and $x\in X_n$, it holds
		\begin{align*}
			&f(x+cJ_n+\sqrt{ah_n}\xi_n+bh_n)
			=f(x+cJ_n)+(\sqrt{ah_n}\xi_n+bh_n)^T Df(x+cJ_n) \\
			&\qquad\; +\frac{1}{2}\big(\sqrt{ah_n}\xi_n+bh_n)^T D^2 f(x+cJ_n)\big(\sqrt{ah_n}\xi_n+bh_n)
			+R^{a,b}_n(f,x+cJ_n,\xi_n)
		\end{align*}
		with $|R_n^{a,b}(f,x+cJ_n,\xi_n)|\leq\frac{|\sqrt{ah_n}\xi_n+bh_n|^3}{6}\|D^3 f\|_\infty$.\
		Since $\E[\xi_n]=0$ and $\E[\xi_n\xi_n^T]=I_d$, we obtain
		\begin{align*}
			\frac{(I_n^{a,b,c}f)(x)-f(x)}{h_n}
			&=\E\left[\frac{f(x+cJ_n+\sqrt{ah_n}\xi_n+bh_n)-f(x)}{h_n}\right] \\
			&=\frac{1}{2}\E\big[\tr\big(aD^2f(x+cJ_n)\big)\big]+b^T \E[Df(x+cJ_n)]\\
            &\quad+\lambda\int_{\R^d\setminus\{0\}} f(x+cz)-f(x)\, \mu(\d z)+\E\big[\tilde{R}_n^{a,b,c}(f,x,\xi_n)\big],
		\end{align*}
		where 
        \begin{align*}
        |\tilde{R}_n^{a,b,c}(f,x,\xi_n)|&\leq\frac{1}{2}|b|^2 h_n\|D^2 f\|_\infty+4\lambda_n^2h_n\|f\|_\infty
        +2|\lambda-\lambda_n|\|f\|_\infty \\
        &\quad+\lambda \bigg|\int_{\R^d\setminus\{0\}}f(x+cz)\,\mu(\d z)-\int_{\R^d\setminus\{0\}}f(x+cz)\,\mu_n(\d z)\bigg|\\
        &\quad+\frac1{h_n}\E[|R_n^{a,b}(f,x+cJ_n,\xi_n)|].
        \end{align*}
		Hence, equation~\eqref{eq:control.bound1}, equation~\eqref{eq:control.bound2.1}, equation~\eqref{eq:control.bound2},
		$\sup_{n\in\N}\E[|\xi_n|^3]<\infty$ and Assumption~\ref{ass:control}(v) and~(vi) imply that, for all $f\in\Cbi(\Rd)$,
		\[ \lim_{n\to\infty}\left\|\frac{I_n f-f}{h_n}-\sup_{(a,b,c)\in\S^d_+\times\Rd\times \R^{d\times d}}\big(A^{a,b,c}f-\varphi(a,b,c)\big)\right\|_\infty=0. \]
		Moreover, the previous estimates and Corollary \ref{cor:cutoff2} imply that
		Assumption \ref{ass:cher}(iii) is valid while analogous computations as in~\cite[Subsection~6.1]{BDKN}
		show that the semigroup $(T(t))_{t\geq 0}$ satisfies the conditions of Theorem~\ref{thm:unique}.\
		Hence, the claim follows from Theorem~\ref{thm:cher}, Theorem~\ref{thm:unique} and Corollary~\ref{cor:unique2}.
	\end{proof}

    \begin{remark}
     Similar techniques and estimates as presented in this section can be used to consider space-dependent controls.\ We focus on the case without diffusion and uniformly bounded and Lipschitz continuous coefficients 
     \[
     b^a\colon \R^d\to \R^d\quad\text{and}\quad c^a\colon \R^d\to \R^{d\times d}\quad \text{for all }a\in A,
     \]
     where $A$ is a nonempty action set. We assume that $A$ is a measurable space and that $(a,x)\mapsto b^a(x)$ and $(a,x)\mapsto c^a(x)$ are product measurable.\ Let $\mathcal A$ be the set of all predictable controls $\alpha\colon \Omega\times \R_+\to A$ and consider the optimal control problem
    \begin{align*} 
		(T(t)f)(x):=\sup_{\alpha\in\A} \bigg(\E\bigg[f\big(X_t^{x,\alpha}\big)-\int_0^t \varphi(\alpha_s)\,\d s\bigg]\bigg),
	\end{align*}
    where $\varphi\colon A\to [0,\infty]$ is a measurable function and 
    \[
    \d X_t^{x,\alpha}=b^{\alpha_t}(X_t^{x,\alpha})\,\d t+ c^{\alpha_t}(X_t^{x,\alpha})\,\d J_t,\quad X_0^{x,\alpha}=x,
    \]
    for all $x\in \R^d$ and $\alpha\in \mathcal A$ .\ We then consider an approximation with uniformly bounded and uniformly Lipschitz continuous coefficients
    \[
     b^a_n\colon \R^d\to \R^d\quad\text{and}\quad c_n^a\colon \R^d\to \R^{d\times d}\quad \text{for all }a\in A \text{ and }n\in \N.
     \]
     Assuming that $x+b_n^a(x)h_n+c_n^a(x)\sum_{k=1}^{N_n(\omega)}Z_n^k(\omega)\in X_n$ for all $x\in X_n$ and $n\in \N$ and $a\in A$, we consider the discrete control problem
     \[
     (I_n f)(x):=\sup_{a\in A}
	\Bigg(\E\bigg[f\bigg(x+b_n^a(x)h_n+c_n^a(x)\sum_{k=1}^{N_n} Z_n^k\bigg)\bigg]-\varphi_n(a)h_n\Bigg).
     \]
     Under Assumption \ref{ass:control} (vi) and the assumption that
     \begin{align*}
       \lim_{n\to \infty} \sup_{a\in A}& \bigg( b_n^a(x)^TDf(x)+\int_{\R^d\setminus\{0\}} f\big(x+c_n^a(x)z\big)-f(x)\,\mu(\d z)-\varphi_n(a)\bigg)\\
       &\qquad =  \sup_{a\in A} \bigg( b^a(x)^TDf(x)+\int_{\R^d\setminus\{0\}} f\big(x+c^a(x)z\big)-f(x)\,\mu(\d z)-\varphi(a)\bigg),
     \end{align*}
    uniformly in $x\in \R^d$ for all $f\in \Cbi(\R^d)$,     
     one can show that
     \[
     \lim_{n\to \infty} \frac{I_n f-f}{h_n}=\sup_{a\in A} \bigg( b^a(x)^TDf(x)+\lambda\int_{\R^d\setminus\{0\}} f\big(x+c^a(x)z\big)-f(x)\, \mu(\d z)-\varphi(a)\bigg),
     \]
     uniformly in $x\in \R^d$ for all $f\in \Cbi(\R^d)$.\ The key estimates are
          \begin{align*}
      \bigg|\frac{f\big(x+c_n^a(x)J_n+b_n^a(x)h_n\big)-f\big(x+c_n^a(x)J_n\big)}{h_n}-&b_a(x)^TDf\big(x+c_n^a(x)J_n\big)\bigg|\\
      &\qquad\qquad \leq \frac12 h_n|b_n^a(x)|^2 \|D^2f\|_\infty
     \end{align*}
     and
     \begin{align*}
      &\bigg|\frac{f\big(x+c_n^a(x)J_n\big)-f(x)}{h_n}-\lambda\int_{\R^d\setminus\{0\}} f\big(x+c_n^a(x)z\big)-f(x)\,\mu(\d z)\bigg|
      \leq 4\lambda_n^2 h_n\|f\|_\infty\\
      & +2|\lambda-\lambda_n|\|f\|_\infty + \lambda\bigg|\int_{\R^d\setminus\{0\}} f\big(x+c_n^a(x)z\big)\,\mu(\d z)-\int_{\R^d\setminus\{0\}} f\big(x+c_n^a(x)z\big)\,\mu_n(\d z)\bigg|.
     \end{align*}
     Using the uniform boundedness of the coefficients $b_n^a$ and $c_n^a$ in $a\in A$ and $n\in \N$, one can again obtain uniform bounds for the right-hand sides.
     \end{remark}}
	
We conclude this section with an alternative approximation for convex HJB equations based on finite differences.\ For simplicity, 
	we focus on the one-dimensional case with a controlled Brownian motion and refer to~\cite{Krylov05, Krylov98, BJ07, DK01, BZ03} for multi-dimensional diffusions with Lipschitz coefficients.

    \begin{remark}\label{rem.finite.differences}
    Let $(\delta_n)_{n\in\N}$,
	$(h_n)_{n\in\N}$ and $(\sigma_n)_{n\in\N}$ be sequences in $(0,\infty)$
	with $\delta_n\to 0,\; h_n\to 0$ and $\sigma_n\to\infty$
    and let 
	$\varphi\colon\R_+\to [0,\infty]$ be a function with $\lim_{\sigma\to\infty}\frac{\varphi(\sigma^2)}{\sigma^2}=\infty$ such that, for every $n\in\N$, there exists $\sigma\in [0, \sigma_n]$ with $\varphi(\sigma^2)=0$.\ Moreover, we assume that
    \begin{equation}\label{eq:finite.diff}
    \frac{\sigma_n^2 h_n}{\delta_n^2}\leq 1\quad\text{for all }n\in \N.
    \end{equation}
    and consider the finite-difference scheme 
	\[ (I_n f)(x):=f(x)+h_n\sup_{\sigma\in[0,\sigma_n]} 
	\left(\frac{\sigma^2}{2}\frac{f(x+\delta_n)-2f(x)+f(x-\delta_n)}{\delta_n^2}-\varphi(\sigma^2)\right), \]
	defined for all $n\in\N$, $f\in\Cb(\R)$ and $x\in\R$.\ We point out that the condition in \eqref{eq:finite.diff} is classical and guarantees that the finite-difference scheme is stable 
	with respect to the supremum norm.\ Employing similar arguments as in the proof of Theorem \ref{thm:control}, one can show that there exists a strongly 
		continuous convex monotone semigroup $(S(t))_{t\geq 0}$ on $\Cb(\R)$ with generator $A\colon D(A)\to \Cb(R)$, 
        given by 
		\[ S(t)f=\lim_{n\to\infty}I_n^{k_n^t}f \quad\mbox{for all } (f,t)\in\Cb(\R)\times\R_+, \]
		where $(k_n^t)_{n\in\N}\subset\N$ is an arbitrary sequence satisfying $k_n^t h_n\to t$.
		It holds
		\[ \Cbi(\R)\subset D(A) \quad\mbox{and}\quad
		Af=\sup_{\sigma\geq 0}\Big(\frac{1}{2}\sigma^2 f''-\varphi(\sigma^2)\Big) 
		\quad\mbox{for all } f\in\Cbi(\R).\]
        Choosing $J_t=0$, $\varphi(a,0)=\varphi(\sigma^2)$ with $a=\sigma^2$ and $\varphi(a,b)=\infty$ for $b\neq 0$ in the definition of $T(t)f$, Theorem \ref{thm:unique}(c) yields that $$S(t)f=T(t)f\quad \text{for all }t\geq 0\text{ and }f\in \Cb(\R).$$ 
  \end{remark}

\appendix

\section{A version of Arzel\`a Ascoli's theorem}
\label{app:A}

Let $X_n\subset\Rd$ such that, for every $x\in\Rd$, there exist $x_n\in X_n$ with $x_n\to x$.\ 
A sequence $(f_n)_{n\in\N}$ of functions $f_n\colon X_n\to\R$ is called bounded if 
$\sup_{n\in\N}\|f_n\|_{\kappa, X_n}<\infty$. 
The proof of the following lemma is similar to~\cite[Lemma~D.1]{BDKN} and 
therefore omitted.

\begin{lemma} \label{lem:AA}
 Let $(f_n)_{n\in\N}$ be a sequence of functions $f_n\colon X_n\to\R$ which is uniformly 
 equicontinuous and bounded.\ Then, there exist $f\in\Ck(\Rd)$ and a subsequence 
 $(n_l)_{l\in\N}\subset\N$ such that, for every $K\Subset\Rd$ with $K\cap X_{n_l}\neq \emptyset$ for all $l\in\N$,
 \[ \lim_{l\to\infty}\|f-f_{n_l}\|_{\infty, K_{n_l}}=0. \]
\end{lemma}

	\section{Basic convexity estimates}
	\label{app:B}

	\begin{lemma} \label{lem:lambda}
		Let $V$ be a vector space and $\phi\colon V\to\R$ be a convex functional. Then, 
		\[ \phi(v)-\phi(w)\leq\lambda\left(\phi\left(\frac{v-w}{\lambda}+w\right)-\phi(w)\right)
		\quad\mbox{for all } v,w\in V \mbox{ and } \lambda\in (0,1]. \]
	\end{lemma}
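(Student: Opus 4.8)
The plan is to exhibit $v$ as an explicit convex combination of $w$ and the point $u:=\frac{v-w}{\lambda}+w$, and then apply the definition of convexity of $\phi$ directly. The only algebraic fact needed is the identity
\[ \lambda u+(1-\lambda)w=\lambda\left(\frac{v-w}{\lambda}+w\right)+(1-\lambda)w=(v-w)+\lambda w+(1-\lambda)w=v, \]
which is valid for every $\lambda\in(0,1]$ (in fact for every $\lambda\neq 0$) and uses no topology, only the vector space structure. Since $\lambda\in(0,1]$, the coefficients $\lambda$ and $1-\lambda$ are nonnegative and sum to one, so this is a genuine convex combination.

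Given this, convexity of $\phi$ yields $\phi(v)\leq\lambda\phi(u)+(1-\lambda)\phi(w)$. Subtracting $\phi(w)$ from both sides gives
\[ \phi(v)-\phi(w)\leq\lambda\phi(u)+(1-\lambda)\phi(w)-\phi(w)=\lambda\phi(u)-\lambda\phi(w)=\lambda\big(\phi(u)-\phi(w)\big), \]
which is exactly the claimed inequality after substituting back $u=\frac{v-w}{\lambda}+w$. The case $\lambda=1$ is trivial (both sides coincide), so one may even restrict attention to $\lambda\in(0,1)$ if desired.

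There is no real obstacle here: the statement is a one-line consequence of the definition of convexity once one spots the correct convex combination, and the estimate is used elsewhere in the paper (e.g. in the proofs of Theorem~\ref{thm:Sn} and Theorem~\ref{thm:euler}) precisely in this form, applied to the convex operators $S_n((i-1)h_n)$ evaluated at a point. I would therefore keep the write-up to the two displays above plus a sentence identifying the convex combination.
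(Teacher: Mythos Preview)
Your proof is correct and follows exactly the same approach as the paper: write $v=\lambda\big(\tfrac{v-w}{\lambda}+w\big)+(1-\lambda)w$, apply convexity, and subtract $\phi(w)$. The only cosmetic difference is that you name the auxiliary point $u$ explicitly, whereas the paper carries the expression through directly.
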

	\begin{proof}
		For every $v,w\in V$ and $\lambda\in (0,1]$, 
		\begin{align*}
			&\phi(v)-\phi(w)
			=\phi\left(\lambda\left(\frac{v-w}{\lambda}+w\right)+(1-\lambda)w\right)-\phi(w) \\
			&\leq\lambda\phi\left(\frac{v-w}{\lambda}+w\right)+(1-\lambda)\phi(w)-\phi(w) =\lambda\left(\phi\left(\frac{v-w}{\lambda}+w\right)-\phi(w)\right). 
		\end{align*}
	\end{proof}

    \begin{lemma} \label{lem:kappa}
 Let $X\subset\Rd$ and $\Phi$ be a convex operator $\Phi\colon\Ck(X)\to\Ck(X)$ such that there exists $c\geq 0$ with
 $\|\Phi(f)\|_{\kappa,X}\leq c\|f\|_{\kappa,X}$ for all $f\in\Ck(X)$. Then,
 \[ \Phi\left(f+\frac{a}{\kappa}\right)\leq\Phi(f)+\frac{c|a|}{\kappa}
 	\quad\mbox{for all } f\in\Ck \mbox{ and } a\in\R. \]
\end{lemma}
\begin{proof}
 Let $f\in\Ck(X)$ and $a\in\R$. For every $\lambda\in (0,1)$,
 \begin{align*}
  \Phi\left(f+\frac{a}{\kappa}\right)
  &\leq\lambda\Phi\left(\frac{1}{\lambda}f\right)+(1-\lambda)\Phi\left(\frac{a}{(1-\lambda)\kappa}\right) \\
  &\leq\lambda\Phi\left(\frac{1}{\lambda}f\right)+(1-\lambda)\frac{c|a|}{(1-\lambda)\kappa}
  =\lambda\Phi\left(\frac{1}{\lambda}f\right)+\frac{c|a|}{\kappa}.
 \end{align*}
 In addition, for every $x\in\Rd$, the mapping $\R\to\R,\; t\mapsto (\Phi(tf))(x)$ is convex and 
 therefore continuous. In particular, we obtain $\lambda\Phi\left(\frac{1}{\lambda}f\right)\to\Phi(f)$ 
 as $\lambda\to 1$.
\end{proof}

\section{Continuity from above}
	\label{app:cont}

	Let $\ca^+_\kappa(\Rd)$ be the set of Borel measures $\mu\colon\B(\Rd)\to [0,\infty]$ 
	with $\int_{\Rd}\frac{1}{\kappa}\,\d\mu<\infty$. Moreover, the convex conjugate of
	a functional $\phi\colon\Ck(\Rd)\to\R$ is defined by 
	\[ \phi^*\colon\ca^+_\kappa(\Rd)\to [0,\infty],\; 
	\mu\mapsto\sup_{f\in\Ck(\Rd)}\big(\mu f-\phi(f)\big),
	\quad\mbox{where}\quad \mu f:=\int_{\Rd}f\,\d\mu. \]
	In the sequel, let $(X_i)_{i\in I}$ be a family of closed sets $X_i\subset \Rd$ 
	and $(\phi_i)_{i\in I}$ be a family of convex monotone functionals $\phi_i\colon\Ck(X_i)\to\R$ 
	with $\phi_i(0)=0$ and 
	\begin{equation} \label{eq:bound.app1}
		\sup_{i\in I}\sup_{f\in B_{\Ck(X_i)}(r)}|\phi_i(f)|<\infty \quad\mbox{for all } r\geq 0. 
	\end{equation}
	We define $\phi_i(f):=\phi_i(f|_{X_i})$ and $K_i:=K\cap X_i$ for all 
	$i\in I$, $f\in\Ck(\Rd)$ and $K\Subset\Rd$.

	\begin{lemma} \label{lem:cont.app}
		The following two statements are equivalent:
		\begin{enumerate}
			\item[(i)] It holds $\sup_{i\in I}\phi_i(f_n)\downarrow 0$ for all sequences $(f_n)_{n\in\N}\subset\Ck(\Rd)$
			with $f_n\downarrow 0$. 
			\item[(ii)] For every $\epsilon>0$ and $r\geq 0$, there exist $c\geq 0$ and $K\Subset\Rd$ with 
			\[ \sup_{i\in I}|\phi_i(f)-\phi_i(g)|\leq c\|f-g\|_{\infty, K_i}+\epsilon \]
			for all $i\in I$ and $f,g\in B_{\Ck(X_i)}(r)$. 
		\end{enumerate}
	\end{lemma}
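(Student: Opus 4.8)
The proof establishes the two implications separately; the backward one is routine and the forward one carries the real content.

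\emph{For (ii)$\Rightarrow$(i).} If $(f_n)_{n\in\N}\subset\Ck(\R^d)$ satisfies $f_n\downarrow0$, then $0\le f_n\le f_1$, so all $f_n$ lie in $B_{\Ck(\R^d)}(r)$ with $r:=\|f_1\|_\kappa$, and by monotonicity $n\mapsto\sup_{i\in I}\phi_i(f_n)\ge0$ is non-increasing. Given $\epsilon>0$, take $c$ and $K$ from (ii) for this $r$ and apply the estimate with $g=0$ (using $\phi_i(0)=0$) to obtain $\sup_{i\in I}\phi_i(f_n)\le c\|f_n\|_{\infty,K}+\epsilon$. Since the $f_n$ are continuous and decrease to $0$, Dini's theorem gives $\|f_n\|_{\infty,K}\to0$, hence $\limsup_{n\to\infty}\sup_{i\in I}\phi_i(f_n)\le\epsilon$; as $\epsilon$ was arbitrary, $\sup_{i\in I}\phi_i(f_n)\downarrow0$.

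\emph{For (i)$\Rightarrow$(ii).} First I would record, using only convexity, monotonicity and~\eqref{eq:bound.app1}, a uniform local Lipschitz estimate in $\|\cdot\|_\kappa$: with $M_\rho:=\sup_{i\in I}\sup_{h\in B_{\Ck(X_i)}(\rho)}|\phi_i(h)|<\infty$, for $f,g\in B_{\Ck(X_i)}(r)$ with $t:=\|f-g\|_{\kappa,X_i}\in(0,1]$ one has $f\le g+\tfrac t\kappa$, so by monotonicity and Lemma~\ref{lem:lambda} with $\lambda=t$,
\[ \phi_i(f)-\phi_i(g)\le\phi_i\big(g+\tfrac t\kappa\big)-\phi_i(g)\le t\Big(\phi_i\big(g+\tfrac1\kappa\big)-\phi_i(g)\Big)\le 2M_{r+1}\,t, \]
and symmetrically, so $|\phi_i(f)-\phi_i(g)|\le 2M_{r+1}\|f-g\|_{\kappa,X_i}$ whenever $\|f-g\|_{\kappa,X_i}\le1$. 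The substantial ingredient is a \emph{tail estimate} extracted from (i): for every $s,\epsilon>0$ there is $K\Subset\R^d$ with
\[ \phi_i(g+b)-\phi_i(g)\le\epsilon\qquad\mbox{for all } i\in I,\ \|g\|_{\kappa,X_i}\le s,\ b\in\Ck(X_i),\ 0\le b\le\tfrac s\kappa,\ b|_{K_i}=0. \]

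I would prove the tail estimate by contradiction. If it fails for some $s_0,\epsilon_0>0$, then for each $n$ (taking $K=B_{\R^d}(n)$) there are $i_n\in I$, $g_n\in B_{\Ck(X_{i_n})}(s_0)$ and $b_n\in\Ck(X_{i_n})$ with $0\le b_n\le\tfrac{s_0}\kappa$, $b_n=0$ on $B_{\R^d}(n)\cap X_{i_n}$, and $\phi_{i_n}(g_n+b_n)-\phi_{i_n}(g_n)>\epsilon_0$. Introduce the cut-offs $q_n(x):=\tfrac{s_0}{\kappa(x)}\big(1\wedge(|x|-n+1)^+\big)\in\Ck(\R^d)$; they satisfy $\|q_n\|_\kappa\le s_0$, $q_n\downarrow0$, and $b_n\le q_n$ on $X_{i_n}$, so $\phi_{i_n}(g_n+q_n)-\phi_{i_n}(g_n)>\epsilon_0$ by monotonicity. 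Now iterate convexity: from $g_n+q_n=\tfrac1k(g_n+kq_n)+(1-\tfrac1k)g_n$ one gets $\phi_{i_n}(g_n+kq_n)\ge\phi_{i_n}(g_n)+k\big(\phi_{i_n}(g_n+q_n)-\phi_{i_n}(g_n)\big)>-M_{s_0}+k\epsilon_0$ for every $k\in\N$, while $g_n+kq_n=\tfrac12(2g_n)+\tfrac12(2kq_n)$ gives $\phi_{i_n}(g_n+kq_n)\le\tfrac12M_{2s_0}+\tfrac12\sup_{i\in I}\phi_i(2kq_n)$; since $2kq_n\downarrow0$ as $n\to\infty$ for each fixed $k$, hypothesis (i) makes the last supremum $<1$ for $n$ large. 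Combining the two bounds yields $k\epsilon_0<M_{s_0}+\tfrac12M_{2s_0}+\tfrac12$ for all large $n$, which is absurd once $k$ is chosen large enough. This balancing of the linear-in-$k$ growth forced by convexity against the uniform bound that continuity from above provides for the cut-offs — and the observation that one must feed the cut-offs $q_n$, not the original perturbations $b_n$, into (i) — is the step I expect to be the main obstacle.

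\emph{Combining the estimates.} Given $r,\epsilon>0$, let $K$ be as in the tail estimate for $s:=4r$, take $f,g\in B_{\Ck(X_i)}(r)$, and set $h:=f-g$, $\eta:=\|h\|_{\infty,K_i}$. Then $f\le g+h^+$ with $h^+=(h^+\wedge\eta)+(h^+-\eta)^+$, where the first summand is $\le\eta$ everywhere (hence of $\|\cdot\|_\kappa$-norm $\le\eta\|\kappa\|_\infty$) and the second vanishes on $K_i$ and is $\le\tfrac{2r}\kappa\le\tfrac s\kappa$; writing $g+h^+=\tfrac12\big(g+2(h^+\wedge\eta)\big)+\tfrac12\big(g+2(h^+-\eta)^+\big)$, convexity together with the Lipschitz estimate on the first half and the tail estimate on the second half gives $\phi_i(f)-\phi_i(g)\le\phi_i(g+h^+)-\phi_i(g)\le c\,\eta+\epsilon$ for a constant $c$ depending only on $r$ when $\eta$ is small (for $\eta$ bounded away from $0$ the bound is immediate from~\eqref{eq:bound.app1}). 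Exchanging $f$ and $g$ yields $|\phi_i(f)-\phi_i(g)|\le c\|f-g\|_{\infty,K_i}+\epsilon$, which is (ii).
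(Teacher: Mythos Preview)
Your proof is correct and takes a genuinely different route from the paper's. The paper's argument for (i)$\Rightarrow$(ii) is measure-theoretic: it invokes a convex duality result (\cite[Theorem~C.1]{BDKN}) to write $\phi_i(f)=\max_{\mu\in M_i}(\mu f-\phi_i^*(\mu))$, shows that every $\mu\in M_i$ is supported on $X_i$ via Urysohn's lemma, and then observes that the supremum functional $\phi:=\sup_{i\in I}\phi_i$ is continuous from above; a compactness criterion (\cite[Theorem~2.2]{BCK}) combined with Prokhorov's theorem then produces a compact $K$ outside which the measures in the relevant level set carry uniformly small $\tfrac{1}{\kappa}$-mass, and the estimate drops out of the dual representation. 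Your argument, by contrast, is entirely elementary and self-contained: you extract the tail estimate directly by a convexity amplification (pushing $q_n$ to $kq_n$ forces linear-in-$k$ growth that collides with the uniform bound once (i) controls $\sup_i\phi_i(2kq_n)$), and then combine it with the local $\|\cdot\|_\kappa$-Lipschitz bound via the decomposition $h^+=(h^+\wedge\eta)+(h^+-\eta)^+$. The paper's approach is shorter once the external machinery is granted and makes the link to tightness transparent; your approach avoids any black-box duality or compactness results and would transfer more readily to settings where a Fenchel--Moreau-type representation is not available.
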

	\begin{proof}
		First, suppose that condition~(i) is satisfied.\ Let $\epsilon>0$ and $r\geq 0$.\ Since
		$\phi_i\colon\Ck(\Rd)\to\R$ is continuous from above, we can apply~\cite[Theorem~C.1]{BDKN}
		to obtain
		\begin{equation} \label{eq:dual}
			\phi_i(f)=\max_{\mu\in M_i}\big(\mu f-\phi_i^*(\mu)\big) 
			\quad\mbox{for all } i\in I \mbox{ and } f\in B_{\Ck(\Rd)}(r), 
		\end{equation}
		where 
		$M_i:=\{\mu\in\ca_\kappa^+(\Rd)\colon\phi_i^*(\mu)\leq\phi_i(\tfrac{2r}{\kappa})-2\phi_i(-\tfrac{r}{\kappa})\}$.
		Furthermore, it holds 
		\begin{equation} \label{eq:Mi}
			\mu(X_i^c)=0 \quad\mbox{for all } \mu\in M_i \mbox{ and } i\in I.
		\end{equation}
		Indeed, by contradiction, suppose that there exists $\mu\in M_i$ with $\mu(X_i^c)>0$.\ Then,
		due to Ulam's theorem, there exists $K\Subset X_i^c$ with $\mu(K)>0$.\ Moreover, by 
		Urysohn's lemma, there exists a continuous function $f\colon\Rd\to [0,1]$ with $f(x)=0$ for all $x\in X_i$ 
		and $f(x)=1$ for all $x\in K$. We use $\phi_i(\lambda f)=\phi_i(\lambda f|_{X_i})=\phi_i(0)=0$ to conclude 
		\[ \phi^\ast_i(\mu)\geq\sup_{\lambda\geq 0}\big(\mu(\lambda f)-\phi_i(\lambda f)\big)
		\geq\sup_{\lambda\geq 0}\lambda\mu(K)=\infty. \]
		This contradicts the fact that $\mu\in M_i$ guarantees $\phi_i^*(\mu)<\infty$. Next, we show that
		\begin{equation} \label{eq:dual2}
			\phi_i(f)=\max_{\mu\in M_i}\big(\mu f-\phi_i^*(\mu)\big) 
			\quad\mbox{for all } i\in I \mbox{ and } f\in B_{\Ck(X_i)}(r),
		\end{equation}
		where $\mu f=\int_{X_i}f\,\d\mu$ is well-defined by equation~\eqref{eq:Mi}.\ Let $i\in I$ and 
		$f\in B_{\Ck(X_i)}(r)$. Since $X_i\subset\Rd$ is closed and $f\kappa\in\Cb(X_i)$, by Tietze's 
		extension theorem, there exists a function $\tilde{g}\in\Cb(\Rd)$ with $(f\kappa)(x)=\tilde{g}(x)$ 
		for all $x\in X_i$ and $\|f\kappa\|_{\infty, X_i}=\|\tilde{g}\|_\infty$. Consequently, the function 
		$g:=\frac{1}{\kappa}\tilde{g}\in\Ck(\Rd)$ satisfies $f(x)=g(x)$ for all $x\in X_i$ and 
		$\|f\|_{\kappa, X_i}=\|g\|_\kappa$. It follows from equation~\eqref{eq:dual} and
		equation~\eqref{eq:Mi} that
		\[ \phi_i(f)=\phi_i(g)=\max_{\mu\in M_i}\big(\mu g-\phi_i^*(\mu)\big) 
		=\max_{\mu\in M_i}\big(\mu f-\phi_i^*(\mu)\big). \]
		Condition~(i) implies that $\phi\colon\Ck(\Rd)\to\R$, $f\mapsto\sup_{i\in I}\phi_i(f)$
		is  convex, monotone and continuous from above with $\phi(0)=0$.\ Hence, 
        condition~\eqref{eq:bound.app1} and~\cite[Theorem~2.2]{BCK} guarantee that the set
		\[ M:=\big\{\mu\in\ca_\kappa^+(\Rd)\colon\phi^*(\mu)\leq\sup_{i\in I}
		  \big(\phi_i\big(\tfrac{2r}{\kappa}\big)-2\phi_i\big(-\tfrac{r}{\kappa}\big)\big)\big\} \]
		is $\sigma(\ca_\kappa^+(\Rd), \Ck(\Rd))$-relatively compact.\ Due to Prokhorov's theorem,
		there exists $K\Subset\Rd$ with 
		$\sup_{\mu\in M}\int_{K^c}\frac{1}{\kappa}\,\d\mu\leq\frac{\epsilon}{2r}$. 
		We use equation~\eqref{eq:dual2} and 
		$M_i\subset M$ to obtain
		\begin{align*}
			&|\phi_i(f)-\phi_i(g)| 
			\leq\sup_{\mu\in M_i}|\mu f-\mu g| 
			\leq\sup_{\mu\in M_i}\left(\int_K |f-g|\,\d\mu+\int_{K^c}|f-g|\,\d\mu\right) \\
			&\leq\sup_{\mu\in M_i}\left(\mu(K)\|f-g\|_{\infty, K_i}+\int_{K^c}\frac{2r}{\kappa}\,\d\mu\right) 
			\leq c\|f-g\|_{\infty, K_i}+\epsilon
		\end{align*}
		for all $i\in I$, $f,g\in B_{\Ck(X_i)}(r)$ and $c:=\sup_{\mu\in M}\mu(K)\leq\phi(1)+\sup_{\mu\in M}\phi^*(\mu)<\infty$. 
		
		Second, suppose that condition~(ii) is valid.\ Let $(f_n)_{n\in\N}\subset\Ck(\Rd)$
		with $f_n\downarrow 0$ and $r:=\|f_1\|_\kappa$. 
		For every $\epsilon>0$, there exist $c\geq 0$ and $K\Subset\Rd$ with 
		\[ \sup_{i\in I}\phi_i(f_n)\leq c\|f_n\|_{\infty, K}+\tfrac{\epsilon}{2}
            \quad\mbox{for all } n\in\N. \]
		By Dini's theorem, there exists $n_0\in\N$ with $\sup_{i\in I}\phi_i(f_n)\leq\epsilon$ 
		for all $n\geq n_0$. 
	\end{proof}

	Now, let $(\Phi_i)_{i\in I}$ be a family of convex monotone operators $\Phi_i\colon\Ck(X_i)\to\Fk(X_i)$
	with $\Phi_i(0)=0$ and 
	\begin{equation} \label{eq:bound.app2}
		\sup_{i\in I}\sup_{f\in B_{\Ck(X_i)}(r)}\|\Phi_i(f)\|_\kappa<\infty \quad\mbox{for all } r\geq 0. 
	\end{equation}
	Here, the space $\Fk(X_i)$ consists of all functions $f\colon X_i\to\R$ with $\|f\|_\kappa<\infty$.

	\begin{corollary} \label{cor:cont.app}
		The following two statements are equivalent:
		\begin{enumerate}
			\item[(i)] It holds $\sup_{i\in I}\sup_{x\in K_i}\,(\Phi_i f_n)(x)\downarrow 0$ for all $f_n\downarrow 0$
			and $K\Subset\Rd$. 
			\item[(ii)] For every $\epsilon>0$, $r\geq 0$ and $K\Subset\Rd$, there exist $c\geq 0$ 
			and $K'\Subset\Rd$ with
			\[ \|\Phi_i f-\Phi_i g\|_{\infty,K_i}\leq c\|f-g\|_{\infty,K'_i}+\epsilon
			\quad\mbox{for all } f,g\in B_{\Ck(X_i)}(r). \]
		\end{enumerate}
	\end{corollary}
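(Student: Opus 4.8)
The plan is to reduce the statement to Lemma~\ref{lem:cont.app} by enlarging the index set so that the evaluation point becomes part of the index. Fix a compact set $K\Subset\R^d$ and put $J:=\{(i,x):i\in I,\ x\in K_i\}$, where $K_i:=K\cap X_i$. For $j=(i,x)\in J$ set $X_j:=X_i$ and define $\phi_j\colon\Ck(X_j)\to\R$ by $\phi_j(f):=(\Phi_i f)(x)$. Since evaluation at a point is linear and order-preserving, each $\phi_j$ is convex and monotone with $\phi_j(0)=(\Phi_i0)(x)=0$. Because $\kappa$ is continuous and strictly positive, $\inf_{y\in K}\kappa(y)>0$, hence $|(\Phi_i f)(x)|\le\|\Phi_i f\|_\kappa\big/\inf_{y\in K}\kappa(y)$ for all $x\in K_i$; together with \eqref{eq:bound.app2} this shows that the family $(\phi_j)_{j\in J}$ satisfies the boundedness hypothesis \eqref{eq:bound.app1}. (Indices $i$ with $K_i=\emptyset$ play no role and both assertions are vacuous for them.)

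For the implication (i)$\,\Rightarrow\,$(ii) I would fix $\epsilon>0$ and $r\ge0$ and first observe that for every $(f_n)_{n\in\N}\subset\Ck(\R^d)$ with $f_n\downarrow0$ one has $\sup_{j\in J}\phi_j(f_n)=\sup_{i\in I}\sup_{x\in K_i}(\Phi_i f_n)(x)\downarrow0$ by hypothesis~(i); thus $(\phi_j)_{j\in J}$ satisfies condition~(i) of Lemma~\ref{lem:cont.app}. The lemma then provides $c\ge0$ and $K'\Subset\R^d$ with $|\phi_j(f)-\phi_j(g)|\le c\|f-g\|_{\infty,K'_j}+\epsilon$ for all $j\in J$ and $f,g\in B_{\Ck(X_j)}(r)$. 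Since $K'_j=K'\cap X_i=K'_i$, taking the supremum over $x\in K_i$ yields $\|\Phi_i f-\Phi_i g\|_{\infty,K_i}\le c\|f-g\|_{\infty,K'_i}+\epsilon$ for all $i\in I$, which is exactly condition~(ii).

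For the converse (ii)$\,\Rightarrow\,$(i), fix $K\Subset\R^d$ and $(f_n)_{n\in\N}\subset\Ck(\R^d)$ with $f_n\downarrow0$, set $r:=\|f_1\|_\kappa$, and let $\epsilon>0$. Applying condition~(ii) with $g:=0$ (using $\Phi_i0=0$ and $\Phi_i f_n\ge0$ by monotonicity, so the left-hand norm equals $\sup_{x\in K_i}(\Phi_i f_n)(x)$) produces $c\ge0$ and $K'\Subset\R^d$ with $\sup_{i\in I}\sup_{x\in K_i}(\Phi_i f_n)(x)\le c\|f_n\|_{\infty,K'}+\tfrac{\epsilon}{2}$ for all $n$. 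By Dini's theorem $\|f_n\|_{\infty,K'}\to0$, so the right-hand side is $\le\epsilon$ for all large $n$; since monotonicity of the $\Phi_i$ makes the sequence $n\mapsto\sup_{i}\sup_{x\in K_i}(\Phi_i f_n)(x)$ non-increasing, it decreases to $0$, which is condition~(i).

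The only point requiring genuine care is the uniform boundedness of the auxiliary functionals $(\phi_j)_{j\in J}$: since $\kappa$ need not be bounded below on $\R^d$, the pointwise values $(\Phi_i f)(x)$ cannot be controlled uniformly in $x\in X_i$, so it is essential to fix the compact set $K$ and confine the evaluation point to $K_i$ before invoking Lemma~\ref{lem:cont.app}. Everything else is routine bookkeeping about how the compacts and the constant produced by the lemma translate back to the operator formulation.
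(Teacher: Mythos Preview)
Your proof is correct and follows exactly the paper's approach: enlarge the index set to $\tilde{I}=\{(i,x):i\in I,\ x\in K_i\}$ with $\tilde{X}_{(i,x)}:=X_i$ and $\tilde{\phi}_{(i,x)}(f):=(\Phi_i f)(x)$, then invoke Lemma~\ref{lem:cont.app}. The paper states only this reduction in one sentence, while you have carefully supplied the verification of \eqref{eq:bound.app1} via $\inf_{y\in K}\kappa(y)>0$ and spelled out both implications, including the direct Dini argument for (ii)$\Rightarrow$(i); all of this is fine and matches the intended reasoning.
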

	\begin{proof}
		Set $\tilde{I}:=\{(i, x)\colon i\in I,x\in K_i\}$, $\tilde{X}_{(i,x)}:=X_i$ and $\tilde{\phi}_{(i, x)}(f):=(\Phi_i f)(x)$.
	\end{proof}

	\begin{corollary} \label{cor:moment.app}
		Let $\tilde{\kappa}\colon\Rd\to (0,\infty)$ be a bounded continuous function 
		such that, for every $\epsilon>0$, there exists $K\Subset \Rd$ with 
		$\sup_{x\in K^c}\frac{\tilde{\kappa}(x)}{\kappa(x)}\leq\epsilon$.\ Furthermore, we
		assume that $\sup_{i\in I}\Phi_i(f_n)\to 0$ for all $(f_n)_{n\in\N}\subset\Ck(\Rd)$
		with $\|f_n\|_{\tilde{\kappa}}\to 0$.\ Then, for every $\epsilon>0$, $r\geq 0$ and 
        $K\Subset\Rd$, there exist $c\geq 0$ and $K'\Subset\Rd$ with
		\[ \|\Phi_i f-\Phi_i g\|_{\infty,K_i}\leq c\|f-g\|_{\infty,K_i'}+\epsilon
		\quad\mbox{for all } i\in I \mbox{ and } f,g\in B_{\Ck(X_i)}(r). \]
	\end{corollary}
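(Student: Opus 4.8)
The plan is to derive this from Corollary~\ref{cor:cont.app}: it suffices to verify condition~(i) of that corollary, i.e.\ that $\sup_{i\in I}\sup_{x\in K_i}(\Phi_i f_n)(x)\downarrow 0$ for every $K\Subset\R^d$ and every $(f_n)_{n\in\N}\subset\Ck(\R^d)$ with $f_n\downarrow 0$. So fix such $K$ and $(f_n)_{n\in\N}$. Since $0\le f_n\le f_1$, one has $\|f_n\|_\kappa\le\|f_1\|_\kappa=:r$, hence $f_n\in B_{\Ck(X_i)}(r)$ for every $i\in I$, and the hypothesis of the corollary provides $c\ge 0$ with $\|\Phi_i f_n\|_{\tilde\kappa,X_i}\le c\|f_n\|_{\tilde\kappa,X_i}$ for all $n$ and $i$.

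The first point I would record is that $\tilde\kappa/\kappa$ is bounded on all of $\R^d$: applying the decay assumption with $\epsilon=1$ yields a compact set outside of which $\tilde\kappa/\kappa\le 1$, while on that compact set the continuous positive function $\tilde\kappa/\kappa$ is bounded; write $C:=\sup_{x\in\R^d}\tilde\kappa(x)/\kappa(x)<\infty$, so that $\|g\|_{\tilde\kappa}\le C\|g\|_\kappa$ for all $g\in\Ck(\R^d)$ and all norms in play are finite. The core estimate is then $\|f_n\|_{\tilde\kappa}\to 0$, proved by the usual compact/non-compact splitting: given $\eta>0$, choose $\tilde K\Subset\R^d$ with $\sup_{x\in\tilde K^c}\tilde\kappa(x)/\kappa(x)\le\eta/(2r)$, so that $|f_n(x)|\tilde\kappa(x)=|f_n(x)|\kappa(x)\cdot(\tilde\kappa/\kappa)(x)\le r\cdot\eta/(2r)=\eta/2$ on $\tilde K^c$, and on the compact set $\tilde K$ (where $f_n\downarrow 0$ pointwise and continuously) apply Dini's theorem to make $\sup_{\tilde K}f_n$, hence $\sup_{\tilde K}|f_n|\tilde\kappa$, smaller than $\eta/2$ for all large $n$.

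To finish, I would use that $\delta:=\inf_{x\in K}\tilde\kappa(x)>0$ by continuity and positivity of $\tilde\kappa$ on the compact set $K$, whence for $x\in K_i$
\[ |(\Phi_i f_n)(x)|\le\frac{\|\Phi_i f_n\|_{\tilde\kappa,X_i}}{\tilde\kappa(x)}\le\frac{c}{\delta}\,\|f_n\|_{\tilde\kappa,X_i}\le\frac{c}{\delta}\,\|f_n\|_{\tilde\kappa}. \]
Since $f_n\ge 0$ and $\Phi_i 0=0$, monotonicity of $\Phi_i$ gives $\Phi_i f_n\ge 0$ and $\Phi_i f_{n+1}\le\Phi_i f_n$; therefore $\sup_{i\in I}\sup_{x\in K_i}(\Phi_i f_n)(x)$ is a nonincreasing sequence dominated by $(c/\delta)\|f_n\|_{\tilde\kappa}\to 0$, so it decreases to $0$. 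This is exactly condition~(i) of Corollary~\ref{cor:cont.app}, whose condition~(ii) is the assertion to be proved. The only mildly delicate aspect is getting the bound uniformly in $i\in I$ together with the global boundedness of $\tilde\kappa/\kappa$; both are immediate once the splitting of $\R^d$ into a compact set and its complement is in place, so I do not expect a real obstacle here.
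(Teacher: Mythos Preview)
Your proof is correct and follows essentially the same route as the paper: reduce to condition~(i) of Corollary~\ref{cor:cont.app}, show $\|f_n\|_{\tilde\kappa}\to 0$ by splitting into a compact set (Dini) and its complement (the decay hypothesis on $\tilde\kappa/\kappa$), then transfer via $\|\Phi_i f_n\|_{\tilde\kappa,X_i}\le c\|f_n\|_{\tilde\kappa}$ and the positivity of $\tilde\kappa$ on $K$. Your version is slightly more explicit (recording the global bound on $\tilde\kappa/\kappa$ and the monotonicity argument for the decrease), but there is no substantive difference.
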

	\begin{proof}
		We verify condition~(i) from Corollary~\ref{cor:cont.app}.\ Let  
		$f_n\downarrow 0$, $\epsilon>0$ and $K\Subset\Rd$ with 
		$\sup_{x\in K^c}\frac{\tilde{\kappa}(x)}{\kappa(x)}\leq\epsilon$.\ 
		By Dini's theorem, there exists $n_0\in\N$ with $f_n(x)\leq\epsilon$ for all
		$n\geq n_0$ and $x\in K$. Hence, for every $n\geq n_0$, 
		\[ \|f_n\|_{\tilde{\kappa}}
		=\sup_{x\in K}|f_n(x)|\tilde{\kappa}(x)
		+\sup_{x\in K^c}|f_n(x)|\kappa(x)\frac{\tilde{\kappa}(x)}{\kappa(x)} 
		\leq\Big(\sup_{x\in K}\tilde{\kappa}(x)+\|f_n\|_\kappa\Big)\epsilon. \]
		We obtain $\sup_{i\in I}\|\Phi_i f_n\|_{\tilde{\kappa}}\to 0$ and 
		$\sup_{i\in I}\sup_{x\in K_i}(\Phi_i f_n)(x)\downarrow 0$ for all $K\Subset\Rd$. 
	\end{proof}

	\begin{corollary} \label{cor:cutoff.app}
		Assume that, for every $\epsilon>0$, $r\geq 0$ and $K\Subset\Rd$, there exist a family
		$(\zeta_x)_{x\in K}$ of continuous functions $\zeta_x\colon\Rd\to\R$ and 
		$\tilde{K}\Subset\Rd$ with 
		\begin{enumerate}
			\item[(i)] $0\leq\zeta_x\leq 1$ for all $x\in K$,
			\item[(ii)] $\sup_{y\in\tilde{K}^c}\zeta_x(y)\leq\epsilon$ for all $x\in K$,
			\item[(iii)] $\big(\Phi_i\big(\frac{r}{\kappa}(1-\zeta_x)\big)\big)(x)\leq\epsilon$ for all $i\in I$
			and $x\in K_i$.
		\end{enumerate}
		Then, for every $\epsilon>0$, $r\geq 0$ and $K\Subset\Rd$, there exist $c\geq 0$ and 
		$K'\Subset\Rd$ with
		\[ \|\Phi_i f-\Phi_i g\|_{\infty,K_i}\leq c\|f-g\|_{\infty,K_i'}+\epsilon
		\quad\mbox{for all } i\in I \mbox{ and } f,g\in B_{\Ck(X_i)}(r). \]
	\end{corollary}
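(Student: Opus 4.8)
The plan is to derive condition~(ii) of this corollary from the stated cut-off hypothesis by going through condition~(i) of Corollary~\ref{cor:cont.app}, which is equivalent to~(ii). Thus I would fix $K\Subset\R^d$ and a sequence $(f_n)_{n\in\N}\subset\Ck(\R^d)$ with $f_n\downarrow 0$, set $r:=\|f_1\|_\kappa$ (so that $0\le f_n\le f_1\le\tfrac r\kappa$ pointwise on $\R^d$), fix an arbitrary target $\epsilon'>0$, and show that $(\Phi_i f_n)(x)\le\epsilon'$ for all sufficiently large $n$, uniformly in $i\in I$ and $x\in K_i$; this is exactly condition~(i) of Corollary~\ref{cor:cont.app}, and the claimed estimate then follows from the equivalence proved there.

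The main device is a pointwise decomposition. Applying the hypothesis with parameters $(\epsilon_0,2r,K)$, where $\epsilon_0\in(0,1]$ is small and will be fixed at the very end, yields a family $(\zeta_x)_{x\in K}$ and a compact set $\tilde K\Subset\R^d$ satisfying~(i)--(iii) with $2r$ in place of $r$. Since $0\le\zeta_x\le 1$ and $0\le f_n\le\tfrac r\kappa$, one has
\[
 f_n\;\le\;\tfrac12\big(2f_n\zeta_x\big)+\tfrac12\Big(\tfrac{2r}{\kappa}(1-\zeta_x)\Big)
\]
with both summands in $\Ck(\R^d)$. Monotonicity and convexity of $\Phi_i$ then give, for $x\in K_i$,
\[
 (\Phi_i f_n)(x)\;\le\;\tfrac12\big(\Phi_i(2f_n\zeta_x)\big)(x)+\tfrac12\Big(\Phi_i\big(\tfrac{2r}{\kappa}(1-\zeta_x)\big)\Big)(x)\;\le\;\tfrac12\big(\Phi_i(2f_n\zeta_x)\big)(x)+\tfrac{\epsilon_0}{2},
\]
the last step being property~(iii). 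It remains to control $\Phi_i(2f_n\zeta_x)$.

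For this I would invoke Dini's theorem on the compact set $\tilde K$: since $f_n\downarrow 0$ pointwise with $f_n$ continuous, the convergence is uniform on $\tilde K$, and $\kappa$ is bounded there, so there is $n_0\in\N$ with $(f_n\kappa)(y)\le\epsilon_0$ for all $n\ge n_0$ and $y\in\tilde K$. Combined with $\zeta_x\le\epsilon_0$ on $\tilde K^c$ and $f_n\le\tfrac r\kappa$ this yields $0\le 2f_n\zeta_x\le\tfrac s\kappa$ on $\R^d$ with $s:=2\max\{1,r\}\epsilon_0\le 1$ (once $\epsilon_0$ is small enough), hence $(\Phi_i(2f_n\zeta_x))(x)\le(\Phi_i(\tfrac s\kappa))(x)$ by monotonicity. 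Now convexity together with $\Phi_i(0)=0$ gives $\Phi_i(\tfrac s\kappa)\le s\,\Phi_i(\tfrac1\kappa)$, and $\Phi_i(\tfrac1\kappa)(x)\le\|\Phi_i(\tfrac1\kappa)\|_\kappa/\kappa(x)\le C/\kappa(x)$ with $C:=\sup_{j\in I}\|\Phi_j(\tfrac1\kappa)\|_\kappa<\infty$ by~\eqref{eq:bound.app2}, using that $\tfrac1\kappa\in B_{\Ck(X_j)}(1)$. Since $\kappa(x)\ge\kappa_K:=\inf_{y\in K}\kappa(y)>0$, I conclude
\[
 (\Phi_i f_n)(x)\;\le\;\epsilon_0\Big(\frac{\max\{1,r\}\,C}{\kappa_K}+\frac12\Big)\quad\mbox{for all } n\ge n_0,\ i\in I\mbox{ and } x\in K_i,
\]
and choosing $\epsilon_0$ small enough that the right-hand side is at most $\epsilon'$ completes the verification.

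The bookkeeping of the constants and of the order of the quantifiers (choosing $\epsilon_0$ only after $C$, $\kappa_K$ and $r$ are available) is routine; the one genuinely delicate point, and the place where the structural hypotheses enter, is that $\Phi_i$ is only assumed bounded on $\|\cdot\|_\kappa$-balls rather than continuous at $0$. Therefore one cannot argue directly that $\Phi_i(2f_n\zeta_x)\to 0$; instead one first dominates $2f_n\zeta_x$ by a small multiple of $\tfrac1\kappa$, and then exploits convexity with $\Phi_i(0)=0$ to make $\Phi_i(\tfrac s\kappa)$ uniformly (in $i$) small, the passage from the $\kappa$-norm bound to a pointwise bound relying on $\Phi_i$ mapping into $\Fk(X_i)$ and on $\kappa$ being bounded below on the compact set $K$.
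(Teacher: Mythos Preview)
Your proof is correct and follows essentially the same route as the paper: both verify condition~(i) of Corollary~\ref{cor:cont.app} by splitting $f_n$ via the cut-off $\zeta_x$, controlling the $(1-\zeta_x)$-part with hypothesis~(iii), and dominating $2f_n\zeta_x$ by a small multiple of $\tfrac{1}{\kappa}$ (using Dini on $\tilde K$ and condition~(ii) on $\tilde K^c$) so that convexity and $\Phi_i(0)=0$ finish the job. The only differences are cosmetic choices of constants: the paper sets $r:=2\|f_1\|_\kappa+1$ and applies the hypothesis with $\epsilon/c$ where $c:=\sup_{i,x\in K_i}(\Phi_i(\tfrac r\kappa))(x)+1$, whereas you use $2r$ in the hypothesis and scale via $\Phi_i(\tfrac1\kappa)$; both lead to the same estimate.
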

	\begin{proof}
		We verify condition~(i) from Corollary~\ref{cor:cont.app}.\ Let 
		 $f_n\downarrow 0$, $K\Subset\Rd$, $\epsilon\in (0,1]$ and $r:=2\|f_1\|_\kappa+1$.\
		Condition~\eqref{eq:bound.app2} yields
		$c:=\sup_{i\in I}\sup_{x\in K_i}\big(\Phi_i\big(\tfrac{r}{\kappa}\big)\big)(x)+1<\infty$.
		By assumption, there exist a family $(\zeta_x)_{x\in K}$ of continuous functions 
		$\zeta_x\colon\Rd\to\R$ and $\tilde{K}\Subset\Rd$ such that the conditions~(i)-(iii) are
		satisfied with $\nicefrac{\epsilon}{c}$.\ For every $i\in I$ and $x\in K_i$, the convexity and monotonicity 
		of $\Phi_i$, condition~(i) and condition~(iii) imply
		\begin{align*}
			(\Phi_i f_n)(x) 
			&\leq\tfrac{1}{2}\big(\Phi_i(2f_n\zeta_x)\big)(x)+\tfrac{1}{2}\big(\Phi_i(2f_n(1-\zeta_x))\big)(x) \leq\tfrac{1}{2}\big(\Phi_i(2f_n\zeta_x)\big)(x)+\tfrac{\epsilon}{2}.
		\end{align*}
		By the conditions~(i) and~(ii) and Dini's theorem, there exists $n_0\in\N$ with 
		$2f_n\zeta_x\leq\frac{r\epsilon}{c\kappa}$ for all $n\geq n_0$ and $x\in K$.\
		Since $\Phi_i$ is convex and monotone with $\Phi_i(0)=0$, we obtain
		\[ \big(\Phi_i(2f_n\zeta_x)\big)(x)\leq\big(\Phi_i\big(\tfrac{r\epsilon}{c\kappa}\big)\big)(x)
		\leq\tfrac{\epsilon}{c}\big(\Phi_i\tfrac{r}{\kappa}\big)(x)\leq\epsilon \]
        for all $i\in I$, $x\in K_i$ and $n\geq n_0$
	\end{proof}


\begin{thebibliography}{10}

    \bibitem{Barbu10}
    {\sc V.~Barbu}, {\em Nonlinear differential equations of monotone types in {B}anach spaces},
    Springer Monographs in Mathematics, Springer, New York, 2010.
    
	\bibitem{BJ05}
	{\sc G.~Barles and E.~R. Jakobsen}, {\em Error bounds for monotone
		approximation schemes for {H}amilton-{J}acobi-{B}ellman equations}, SIAM J.
	Numer. Anal., 43 (2005), pp.~540--558.
	
	\bibitem{BJ07}
	{\sc G.~Barles and E.~R. Jakobsen}, {\em Error bounds for monotone
		approximation schemes for parabolic {H}amilton-{J}acobi-{B}ellman equations},
	Math. Comp., 76 (2007), pp.~1861--1893.


\bibitem{BP88}
	{\sc G.~Barles and B. Perthame}, {\em Exit Time Problems in Optimal Control and Vanishing Viscosity Method},
	SIAM J.\ Control Optim., 26 (1988), pp.~1133--1148.

    
	\bibitem{BS91}
	{\sc G.~Barles and P.~E. Souganidis}, {\em Convergence of approximation schemes
		for fully nonlinear second order equations}, Asymptotic Anal., 4 (1991),
	pp.~271--283.
	
	\bibitem{MR1080619}
	{\sc E.~N. Barron and R.~Jensen}, {\em Semicontinuous viscosity solutions for
		{H}amilton-{J}acobi equations with convex {H}amiltonians}, Commun. Partial Differ. Equ., 
        15 (1990), pp.~1713--1742.
	
	\bibitem{MR1076572}
	{\sc E.~N. Barron and R.~Jensen}, {\em Optimal control and semicontinuous
		viscosity solutions}, Proc. Amer. Math. Soc., 113 (1991), pp.~397--402.
	
	\bibitem{BCK}
	{\sc D.~Bartl, P.~Cheridito, and M.~Kupper}, {\em Robust expected utility
		maximization with medial limits}, J. Math. Anal. Appl., 471 (2019),
	pp.~752--775.
	
	\bibitem{Bartl2021}
	{\sc D.~Bartl, S.~Drapeau, J.~Ob\l\'{o}j, and J.~Wiesel}, {\em Sensitivity
		analysis of {W}asserstein distributionally robust optimization problems},
	Proc. R. Soc. A., 477 (2021), pp.~Paper No. 20210176, 19.

    \bibitem{BC91}
    {\sc P.~B\'enilan and M.G.~Crandall}, {\em Completely accretive operators},
    in: Semigroup theory and evolution equations, {D}elft, 1989,
    in: Lecture Notes in Pure and Appl. Math., vol.~135, Dekker, New York, 1991,
    pp.~41--75.
	
	\bibitem{Blanchet2019}
	{\sc J.~Blanchet and K.~Murthy}, {\em Quantifying distributional model risk via
		optimal transport}, Math. Oper. Res., 44 (2019), pp.~565--600.
	
	\bibitem{BDKN}
	{\sc J.~Blessing, R.~Denk, M.~Kupper, and M.~Nendel}, {\em Convex monotone
		semigroups and their generators with respect to {$\Gamma$}-convergence},
        J. Funct. Anal., 288(8) (2025), no.~110841.

    \bibitem{BK22}
    {\sc J.~Blessing and M.~Kupper}, {\em Viscous {H}amilton-{J}acobi equations 
    in exponential {O}rlicz hearts}, J. Math. Pures Appl., 163 (2022), pp.~654--672.

    \bibitem{BK22+}
	{\sc J.~Blessing and M.~Kupper}, {\em Nonlinear semigroups built on generating
		families and their {L}ipschitz sets}, Potential Anal., 59 (2023), pp.~857--895.
	
	\bibitem{BK22}
	{\sc J.~Blessing and M.~Kupper}, {\em Nonlinear semigroups and limit theorems
		for convex expectations}, Ann. Appl. Probab., 35(2) (2025), pp.~779--821. 
	
	\bibitem{BZ03}
	{\sc J.~F. Bonnans and H.~Zidani}, {\em Consistency of generalized finite
		difference schemes for the stochastic {HJB} equation}, SIAM J. Numer. Anal.,
	41 (2003), pp.~1008--1021.

    \bibitem{Brezis71}
    {\sc H.~Br\'ezis}, {\em Monotonicity methods in {H}ilbert spaces and some 
    applications to nonlinear partial differential equations},
    in: Contributions to nonlinear functional analysis, {C}enter, {U}niv. {W}isconsin,
    {M}adison, {W}is., 1971, in: ({P}roc. {S}ympos., {M}ath. {R}es., Academic Press,
    New York, 1971, pp.~101--156. 
    
	\bibitem{BP}
	{\sc H.~Br\'{e}zis and A.~Pazy}, {\em Convergence and approximation of
		semigroups of nonlinear operators in {B}anach spaces}, J. Functional
	Analysis, 9 (1972), pp.~63--74.
	
	\bibitem{Briand2002}
	{\sc P.~Briand, B.~Delyon, and J.~M\'{e}min}, {\em On the robustness of
		backward stochastic differential equations}, Stochastic Process. Appl., 97
	(2002), pp.~229--253.
	
	\bibitem{BCZ12}
	{\sc A.~Briani, F.~Camilli, and H.~Zidani}, {\em Approximation schemes for
		monotone systems of nonlinear second order partial differential equations:
		convergence result and error estimate}, Differ. Equ. Appl., 4 (2012),
	pp.~297--317.
	
	\bibitem{bff}
	{\sc C.~Budde and B.~Farkas}, {\em Intermediate and extrapolated spaces for
		bi-continuous semigroups}, Journal of Evolution Equations, (2019),
	pp.~321--359.
	
	\bibitem{CS08}
	{\sc L.~A. Caffarelli and P.~E. Souganidis}, {\em A rate of convergence for
		monotone finite difference approximations to fully nonlinear, uniformly
		elliptic {PDE}s}, Comm. Pure Appl. Math., 61 (2008), pp.~1--17.
	
	\bibitem{MR1293091}
	{\sc S.~Cerrai}, {\em A {H}ille-{Y}osida theorem for weakly continuous
		semigroups}, Semigroup Forum, 49 (1994), pp.~349--367.
	
	\bibitem{CX}
	{\sc Z.~Chen and J.~Xiong}, {\em Large deviation principle for diffusion
		processes under a sublinear expectation}, Sci. China Math., 55 (2012),
	pp.~2205--2216.
	
	\bibitem{MR2319056}
	{\sc P.~Cheridito, H.~M. Soner, N.~Touzi, and N.~Victoir}, {\em Second-order
		backward stochastic differential equations and fully nonlinear parabolic
		{PDE}s}, Comm. Pure Appl. Math., 60 (2007), pp.~1081--1110.
	
	\bibitem{chernoff68}
	{\sc P.~R. Chernoff}, {\em Note on product formulas for operator semigroups},
	J. Funct. Anal., 2 (1968), pp.~238--242.
	
	\bibitem{chernoff74}
	{\sc P.~R. Chernoff}, {\em Product formulas, nonlinear semigroups, and addition
		of unbounded operators}, vol.~140, American Mathematical Soc., 1974.
	
	\bibitem{MR1906435}
	{\sc F.~Coquet, Y.~Hu, J.~M\'{e}min, and S.~Peng}, {\em Filtration-consistent
		nonlinear expectations and related {$g$}-expectations}, Probab. Theory
	Related Fields, 123 (2002), pp.~1--27.
	
	\bibitem{CIL}
	{\sc M.~G. Crandall, H.~Ishii, and P.-L. Lions}, {\em User's guide to viscosity
		solutions of second order partial differential equations}, Bull. Amer. Math.
	Soc. (N.S.), 27 (1992), pp.~1--67.
	
	\bibitem{CL71}
	{\sc M.~G. Crandall and T.~M. Liggett}, {\em Generation of semi-groups of
		nonlinear transformations on general {B}anach spaces}, Amer. J. Math., 93
	(1971), pp.~265--298.
	
	\bibitem{CL83}
	{\sc M.~G. Crandall and P.-L. Lions}, {\em Viscosity solutions of
		{H}amilton-{J}acobi equations}, Trans. Amer. Math. Soc., 277 (1983),
	pp.~1--42.
	
	\bibitem{Criens}
	{\sc D.~Criens and L.~Niemann}, {\em Nonlinear continuous semimartingales},
        Electron. J. Probab., 28 (2023), pp.~1--40.

\bibitem{Criens25a}
   {\sc D.~Criens and L. Niemann}, {\em Nonlinear semimartingales and Markov processes with jumps}, J. Evol. Equ., 25 (2025), Id/No 21.
    
 \bibitem{Criens25b}
 {\sc D. Criens and L. Niemann}, {\em A stochastic representation theorem for sublinear semigroups with non-local generators}, Electron. J. Probab., 30 (2025), Id/No 77.

\bibitem{delbaen2}
		{\sc F.~Delbaen}, {\rm Convex increasing functionals on {$C_b(X)$} spaces},
		 Stud. Math., 271 (2023), pp.~107--120.
    
	\bibitem{DZ2010}
	{\sc A.~Dembo and O.~Zeitouni}, {\em Large deviations techniques and
		applications}, vol.~38 of Stochastic Modelling and Applied Probability,
	Springer-Verlag, Berlin, 2010.
	\newblock Corrected reprint of the second (1998) edition.
	
	\bibitem{DKN20}
	{\sc R.~Denk, M.~Kupper, and M.~Nendel}, {\em A semigroup approach to nonlinear
		{L}\'evy processes}, Stochastic Process. Appl., 130 (2020), pp.~1616--1642.

    \bibitem{DKN21}
    {\sc R.~Denk, M.~Kupper, and M.~Nendel}, {\em Convex semigroups on {$L^p$}-like spaces},
    J. Evol. Equ., 21(2) (2021), pp.~2491--2521.

    
	\bibitem{DKN21+}
	{\sc R.~Denk, M.~Kupper, and M.~Nendel}, {\em Convex semigroups on lattices of
		continuous functions}, Publ. Res. Inst. Math. Sci., 59 (2023), pp.~393--421.
	
	\bibitem{DE}
	{\sc P.~Dupuis and R.~S. Ellis}, {\em A weak convergence approach to the theory
		of large deviations}, Wiley Series in Probability and Statistics: Probability
	and Statistics, John Wiley \& Sons, Inc., New York, 1997.
	\newblock A Wiley-Interscience Publication.
	
	\bibitem{DK01}
	{\sc P.~Dupuis and H.~J. Kushner}, {\em Numerical methods for stochastic
		control problems in continuous time}, vol.~24 of Applications of Mathematics
	(New York), Springer-Verlag, New York, second~ed., 2001.
	\newblock Stochastic Modelling and Applied Probability.
	
	\bibitem{MR1434407}
	{\sc N.~El~Karoui, S.~Peng, and M.~C. Quenez}, {\em Backward stochastic
		differential equations in finance}, Math. Finance, 7 (1997), pp.~1--71.
	
	\bibitem{EK}
	{\sc S.~N. Ethier and T.~G. Kurtz}, {\em Markov processes: Characterization and
		convergence}, Wiley Series in Probability and Mathematical Statistics:
	Probability and Mathematical Statistics, John Wiley \& Sons, Inc., New York,
	1986.
	
	\bibitem{Evans87}
	{\sc L.~C. Evans}, {\em Nonlinear semigroup theory and viscosity solutions of
		{H}amilton-{J}acobi {PDE}}, in Nonlinear semigroups, partial differential
	equations and attractors ({W}ashington, {D}.{C}., 1985), vol.~1248 of Lecture
	Notes in Math., Springer, Berlin, 1987, pp.~63--77.
	
	\bibitem{MR3955321}
	{\sc T.~Fadina, A.~Neufeld, and T.~Schmidt}, {\em Affine processes under
		parameter uncertainty}, Probab. Uncertain. Quant. Risk, 4 (2019), pp.~Paper
	No. 1, 35.
	
	\bibitem{Feng}
	{\sc J.~Feng}, {\em Large deviation for diffusions and {H}amilton-{J}acobi
		equation in {H}ilbert spaces}, Ann. Probab., 34 (2006), pp.~321--385.
	
	\bibitem{FFK}
	{\sc J.~Feng, J.-P. Fouque, and R.~Kumar}, {\em Small-time asymptotics for fast
		mean-reverting stochastic volatility models}, Ann. Appl. Probab., 22 (2012),
	pp.~1541--1575.
	
	\bibitem{FK}
	{\sc J.~Feng and T.~G. Kurtz}, {\em Large deviations for stochastic processes},
	vol.~131 of Mathematical Surveys and Monographs, American Mathematical
	Society, Providence, RI, 2006.
	
	\bibitem{FS}
	{\sc W.~H. Fleming and H.~M. Soner}, {\em Controlled {M}arkov processes and
		viscosity solutions}, vol.~25 of Stochastic Modelling and Applied
	Probability, Springer, New York, second~ed., 2006.
	
	\bibitem{MR1200233}
	{\sc H.~Frankowska}, {\em Lower semicontinuous solutions of
		{H}amilton-{J}acobi-{B}ellman equations}, SIAM J. Control Optim., 31 (1993),
	pp.~257--272.
	
	\bibitem{FW}
	{\sc M.~I. Freidlin and A.~D. Wentzell}, {\em Random perturbations of dynamical
		systems}, vol.~260 of Grundlehren der mathematischen Wissenschaften
	[Fundamental Principles of Mathematical Sciences], Springer, Heidelberg,
	third~ed., 2012.
	\newblock Translated from the 1979 Russian original by Joseph Sz\"{u}cs.
	
	\bibitem{Haydon}
	{\sc D.~H. Fremlin, D.~J.~H. Garling, and R.~G. Haydon}, {\em Bounded measures
		on topological spaces}, Proc. London Math. Soc. (3), 25 (1972), pp.~115--136.
	
	\bibitem{GJ}
	{\sc F.~Gao and H.~Jiang}, {\em Large deviations for stochastic differential
		equations driven by {$G$}-{B}rownian motion}, Stochastic Process. Appl., 120
	(2010), pp.~2212--2240.
	
	\bibitem{Geiss2020}
	{\sc C.~Geiss, C.~Labart, and A.~Luoto}, {\em Random walk approximation of
		{BSDE}s with {H}\"{o}lder continuous terminal condition}, Bernoulli, 26
	(2020), pp.~159--190.
	
	\bibitem{Geng2014}
	{\sc X.~Geng, Z.~Qian, and D.~Yang}, {\em {$G$}-{B}rownian motion as rough
		paths and differential equations driven by {$G$}-{B}rownian motion}, in
	S\'{e}minaire de {P}robabilit\'{e}s {XLVI}, vol.~2123 of Lecture Notes in
	Math., Springer, Cham, 2014, pp.~125--193.
	
	\bibitem{GNR}
	{\sc B.~Goldys, M.~Nendel, and M.~R\"{o}ckner}, {\em Operator semigroups in the
		mixed topology and the infinitesimal description of {M}arkov processes},
        J. Differ. Equ., 412 (2024), pp.~23--86.
	
	\bibitem{HJL21}
	{\sc M.~Hu, X.~Ji, and G.~Liu}, {\em On the strong {M}arkov property for
		stochastic differential equations driven by {$G$}-{B}rownian motion},
	Stochastic Process. Appl., 131 (2021), pp.~417--453.
	
	\bibitem{MR4251961}
	{\sc M.~Hu and S.~Peng}, {\em {$G$}-{L}\'{e}vy processes under sublinear
		expectations}, Probab. Uncertain. Quant. Risk, 6 (2021), pp.~1--22.
	
	\bibitem{Hu1997}
	{\sc Y.~Hu and S.~Peng}, {\em A stability theorem of backward stochastic
		differential equations and its application}, C. R. Acad. Sci. Paris S\'{e}r.
	I Math., 324 (1997), pp.~1059--1064.
	
	\bibitem{Jakobsen2019}
	{\sc E.~R. Jakobsen, A.~Picarelli, and C.~Reisinger}, {\em Improved order
		{$1/4$} convergence for piecewise constant policy approximation of stochastic
		control problems}, Electron. Commun. Probab., 24 (2019), pp.~Paper No. 59.

    \bibitem{Kato67}
    {\sc T.~Kato}, {\em Nonlinear semigroups and evolution equations},
    J. Math. Soc. Japan, 19 (1967), pp.~508--520.
	
	\bibitem{MR3361253}
	{\sc N.~Kazi-Tani, D.~Possama\"{\i}, and C.~Zhou}, {\em Second order {BSDE}s
		with jumps: existence and probabilistic representation for fully-nonlinear
		{PIDE}s}, Electron. J. Probab., 20 (2015), pp.~1--31.
	
	\bibitem{Kertz74}
	{\sc R.~P. Kertz}, {\em Perturbed semigroup limit theorems with applications to
		discontinuous random evolutions}, Trans. Amer. Math. Soc., 199 (1974),
	pp.~29--53.
	
	\bibitem{Kertz78}
	{\sc R.~P. Kertz}, {\em Limit theorems for semigroups with perturbed
		generators, with applications to multiscaled random evolutions}, J.
	Functional Analysis, 27 (1978), pp.~215--233.
	
	\bibitem{Kraaij19}
	{\sc R.~C. Kraaij}, {\em Gamma convergence on path-spaces via convergence of
		viscosity solutions of {H}amilton--{J}acobi equations}, Preprint arXiv:
	1905.08785,  (2019).
	
	\bibitem{Kraaij22}
	{\sc R.~C. Kraaij}, {\em A general convergence result for viscosity solutions
		of {H}amilton-{J}acobi equations and non-linear semigroups}, J. Funct. Anal.,
	282 (2022), pp.~Paper No. 109346, 55.
	
	\bibitem{Krak2017}
	{\sc T.~Krak, J.~De~Bock, and A.~Siebes}, {\em Imprecise continuous-time
		{M}arkov chains}, Internat. J. Approx. Reason., 88 (2017), pp.~452--528.
	
	\bibitem{Krylov99}
	{\sc N.~V. Krylov}, {\em Approximating value functions for controlled
		degenerate diffusion processes by using piece-wise constant policies},
	Electron. J. Probab., 4 (1999), pp.~no. 2, 19.
	
	\bibitem{Krylov98}
	{\sc N.~V. Krylov}, {\em On the rate of convergence of finite-difference
		approximations for {B}ellman's equations with variable coefficients}, Probab.
	Theory Related Fields, 117 (2000), pp.~1--16.
	
	\bibitem{Krylov05}
	{\sc N.~V. Krylov}, {\em The rate of convergence of finite-difference
		approximations for {B}ellman equations with {L}ipschitz coefficients}, Appl.
	Math. Optim., 52 (2005), pp.~365--399.
	
	\bibitem{kuhn2018viscosity}
	{\sc F.~K\"{u}hn}, {\em Viscosity solutions to {H}amilton-{J}acobi-{B}ellman
		equations associated with sublinear {L}\'{e}vy(-type) processes}, ALEA Lat.
	Am. J. Probab. Math. Stat., 16 (2019), pp.~531--559.
	
	\bibitem{kuhnemund}
	{\sc F.~K\"{u}hnemund}, {\em A {H}ille-{Y}osida theorem for bi-continuous
		semigroups}, Semigroup Forum, 67 (2003), pp.~205--225.
	
	\bibitem{Kunze}
	{\sc M.~Kunze}, {\em Continuity and equicontinuity of semigroups on norming
		dual pairs}, Semigroup Forum, 79 (2009), pp.~540--560.
	
	\bibitem{Kurtz70}
	{\sc T.~G. Kurtz}, {\em A general theorem on the convergence of operator
		semigroups}, Trans. Amer. Math. Soc., 148 (1970), pp.~23--32.
	
	\bibitem{Kurtz73}
	{\sc T.~G. Kurtz}, {\em Convergence of sequences of semigroups of nonlinear
		operators with an application to gas kinetics}, Trans. Amer. Math. Soc., 186
	(1973), pp.~259--272 (1974).
	
	\bibitem{Kurtz75}
	{\sc T.~G. Kurtz}, {\em Semigroups of conditioned shifts and approximation of
		{M}arkov processes}, Ann. Probability, 3 (1975), pp.~618--642.
	
	\bibitem{Esfahani2018}
	{\sc P.~Mohajerin~Esfahani and D.~Kuhn}, {\em Data-driven distributionally
		robust optimization using the {W}asserstein metric: performance guarantees
		and tractable reformulations}, Math. Program., 171 (2018), pp.~115--166.
	
	\bibitem{nendel}
	{\sc M.~Nendel}, {\em Lower semicontinuity of monotone functionals in the mixed
		topology on ${C}_b$}, Finance Stoch., 29 (2025), pp.~261-287.
	
	\bibitem{NR21}
	{\sc M.~Nendel and M.~R\"ockner}, {\em Upper envelopes of families of {F}eller
		semigroups and viscosity solutions to a class of nonlinear {C}auchy
		problems}, SIAM J. Control Optim., 59 (2021), pp.~4400--4428.
	
	\bibitem{neufeld2017nonlinear}
	{\sc A.~Neufeld and M.~Nutz}, {\em Nonlinear {L}\'{e}vy processes and their
		characteristics}, Trans. Amer. Math. Soc., 369 (2017), pp.~69--95.
	
	\bibitem{Nisio76}
	{\sc M.~Nisio}, {\em On a non-linear semi-group attached to stochastic optimal
		control}, Publ. Res. Inst. Math. Sci., 12 (1976/77), pp.~513--537.
	
	\bibitem{Papapantoleon2023}
	{\sc A.~Papapantoleon, D.~Possama\"{\i}, and A.~Saplaouras}, {\em Stability of
		backward stochastic differential equations: the general {L}ipschitz case},
	Electron. J. Probab., 28 (2023), pp.~Paper No. 51, 56.
	
	\bibitem{pazy}
	{\sc A.~Pazy}, {\em Semigroups of linear operators and applications to partial
		differential equations}, vol.~44 of Applied Mathematical Sciences,
	Springer-Verlag, New York, 1983.
	
	\bibitem{Peng08}
	{\sc S.~Peng}, {\em Multi-dimensional {$G$}-{B}rownian motion and related
		stochastic calculus under {$G$}-expectation}, Stochastic Process. Appl., 118
	(2008), pp.~2223--2253.
	
	\bibitem{Peng19}
	{\sc S.~Peng}, {\em Nonlinear expectations and stochastic calculus under
		uncertainty}, vol.~95 of Probability Theory and Stochastic Modelling,
	Springer, Berlin, 2019.
	\newblock With robust CLT and G-Brownian motion.
	
	\bibitem{Popovic}
	{\sc L.~Popovic}, {\em Large deviations of {M}arkov chains with multiple
		time-scales}, Stochastic Process. Appl., 129 (2019), pp.~3319--3359.
	
	\bibitem{Sentilles}
	{\sc F.~D. Sentilles}, {\em Bounded continuous functions on a completely
		regular space}, Trans. Amer. Math. Soc., 168 (1972), pp.~311--336.
	
	\bibitem{Soner2012}
	{\sc H.~M. Soner, N.~Touzi, and J.~Zhang}, {\em Wellposedness of second order
		backward {SDE}s}, Probab. Theory Related Fields, 153 (2012), pp.~149--190.
	
	\bibitem{Trotter}
	{\sc H.~F. Trotter}, {\em Approximation of semi-groups of operators}, Pacific
	J. Math., 8 (1958), pp.~887--919.
	
	\bibitem{Varadhan1984}
	{\sc S.~R.~S. Varadhan}, {\em Large deviations and applications}, vol.~46 of
	CBMS-NSF Regional Conference Series in Applied Mathematics, Society for
	Industrial and Applied Mathematics (SIAM), Philadelphia, PA, 1984.

    \bibitem{wiweger}
  {\sc A.~Wiweger}, {\em Linear spaces with mixed topology}, Stud. Math., 20 (1961), pp.~47--68.
  
    
	\bibitem{MR1696772}
	{\sc J.~Yong and X.~Y. Zhou}, {\em Stochastic controls}, vol.~43 of
	Applications of Mathematics (New York), Springer-Verlag, New York, 1999.
	
\end{thebibliography}
\end{document}